\documentclass[english,11pt,reqno,twoside]{amsart}

\usepackage[english]{babel}
\usepackage[left=2.4cm,right=2.4cm,top=2cm,bottom=2cm]{geometry}

\usepackage{amsmath,amssymb,amsthm,amsfonts}
\usepackage{color}

\usepackage{amsfonts,color}
\usepackage{latexsym,amssymb}
\usepackage{amsmath}
\usepackage{enumitem}
\usepackage[colorlinks=true,urlcolor=blue,
citecolor=red,linkcolor=blue,linktocpage,pdfpagelabels,
bookmarksnumbered,bookmarksopen]{hyperref}
\usepackage{mathrsfs}
\usepackage[hyperpageref]{backref}

\usepackage[english]{babel}
\usepackage[utf8]{inputenc}
\usepackage{tikz}
\usetikzlibrary{patterns}

\newtheorem{theorem}{Theorem}[section]
\newtheorem{proposition}[theorem]{Proposition}

\newtheorem{lemma}[theorem]{Lemma}
\newtheorem{definition}[theorem]{Definition}
\newtheorem{remark}[theorem]{Remark}

\title[Existence and nonexistence for
$(p,q)$-Laplacian operator]{On a zero-mass $(p,q)$-Laplacian equation involving subcritical and supercritical growth in $\mathbb{R}^N$}

\author[E. Souto de Medeiros]{Everaldo Souto de Medeiros}
\author[A. Ara\'ujo do Nascimento]{Antonio Ara\'ujo do Nascimento}

\author[G. Siciliano]{Gaetano Siciliano}

\address[A. Ar\'aujo do Nascimento]{\newline\indent Departamento de Matem\'atica
\newline\indent 
Universidade Federal da Para\'iba,
\newline\indent 
Jo\~ao Pessoa PB, 58051-900, Brazil}
\email{\href{mailto:aadn@estudantes.ufpb.br}{aadn@estudantes.ufpb.br}}

\address[E. Souto de Medeiros]{\newline\indent Departamento de Matem\'atica
\newline\indent 
Universidade Federal da Para\'iba,
\newline\indent 
Jo\~ao Pessoa PB, 58051-900, Brazil}
\email{\href{mailto:everaldo@mat.ufpb.br}{everaldo@mat.ufpb.br}
}

\address[G. Siciliano]{\newline\indent Dipartimento di Matematica
\newline\indent 
 Universit\`a degli Studi di Bari Aldo Moro
\newline\indent 
Via E. Orabona 4, 70125 Bari, Italy}
\email{\href{mailto:gaetano.siciliano@uniba.it}{gaetano.siciliano@uniba.it}}

\subjclass[2020]
{
35A15, 
35J50, 
35J61, 
}

\keywords{ $p-$Laplacian equations;
Variational methods.}

\begin{document}

\begin{abstract}
This paper is concerned with the zero-mass $(p,q)$-Laplacian equation
$$
-\Delta_p u-\Delta_q u = |u|^{r-2}u+\lambda |u|^{s-2}u
\quad \text{in } \mathbb{R}^N,
$$
where $1<q<p<N$. The exponents $r$ and $s$ may belong to either the subcritical or the supercritical range with respect to the critical Sobolev exponents $p^*$ and $q^*$. We establish existence and nonexistence results and show that the sign of the parameter $\lambda$ determines the solvability regimes of the equation. The existence proofs rely on variational methods, truncation arguments, and regularity theory, while the nonexistence results are derived from a suitable Pohozaev-type identity for the $(p,q)$-Laplacian operator.
\end{abstract}

\maketitle

\begin{center}
\begin{minipage}{12cm}
\tableofcontents
\end{minipage}
\end{center}

\medskip

\section{Introduction and statement of the results}
In this paper, we study the existence and nonexistence of solutions to a nonlinear elliptic equation driven by the nonhomogeneous $(p,q)$-Laplace operator of the form
\begin{equation} \label{P}
-\Delta_p u-\Delta_q u=|u|^{r-2}u+\lambda|u|^{s-2}u\quad\mbox{in} \quad \mathbb{R}^N,
\tag{$\mathcal{P}_\lambda$}
\end{equation}
where $1<q<p<N$, $\lambda$ is a real parameter and the exponents $r,s$ satisfy
$$
\frac{Nq}{N-q}=q^* < r,s < p^*=\frac{Np}{N-p}\quad\mbox{or}\quad p^*<s.
$$
Here, for $1<m<\infty$, $-\Delta_m u:=-\rm{div}(|\nabla u|^{m-2}\nabla u )$ denotes the classical  $m-$Laplacian operator.

Problems involving $(p,q)$-Laplacian type operators have attracted considerable attention in recent years due to both their rich mathematical structure and their broad range of applications in physics and related sciences:
they appear in modeling of nonlinear reaction-diffusion processes, non-Newtonian fluids, plasma physics, biophysics, nonlinear elasticity, and materials exhibiting nonstandard growth behavior (see e.g. \cite{AC95}). In particular, such models describe media in which different diffusion mechanisms interact and compete according to the intensity of the gradient.

From a mathematical perspective, elliptic equations involving such operators have attracted considerable attention in the literature, including in the context of critical growth problems. This interest is largely motivated by the fact that the interaction between different values of $p$ and $q$ induces a nonhomogeneous variational structure, resulting in significant analytical difficulties concerning compactness, regularity, and qualitative properties of solutions.

In bounded domains, such equations have been extensively studied by many authors. For instance, in \cite{LZ09}, the authors consider the critical problem and establish the existence of infinitely many solutions. In \cite{MP09}, the problem is investigated using cohomological index theory. Additional existence and multiplicity results can be found, for example, in \cite{CI05,DO96}.

In the whole space $\mathbb{R}^N$,  elliptic equations involving the $(p,q)$-Laplacian operator have also been extensively studied by several authors; see, for instance, \cite{AR24, EPW25}. In the aforementioned works, the authors consider elliptic equations in the positive-mass setting, that is, equations of the form
$$
-\Delta_p u - \Delta_q u + V(x)(u^{p-1} + u^{q-1}) = f(x,u) \quad \text{in } \mathbb{R}^N,
$$
where the potential $V$ satisfies suitable assumptions that allow one to recover compactness, while the nonlinearity $f(x,u)$ may exhibit subcritical, critical, or supercritical growth.

In the zero-mass case, that is, when $V\equiv 0$, the equation becomes considerably more challenging due to the lack of compactness. In this case, we mention the papers \cite{BF23, BBF22, BBF21, LP24}, where the authors considered an equation of the form
\begin{equation}\label{eq:zero-mass}
-\Delta_p u - \Delta_q u = \lambda W(x)|u|^{q-2}u + K(x)|u|^{p^{*}-2}u \quad \text{in } \mathbb{R}^N,
\end{equation}
with $1<q<p<N$, under suitable assumptions on the weight functions $W$ and $K$, the authors established existence and multiplicity results. In particular, the function $W$ is required to be nonconstant, and therefore the techniques developed in those works cannot be applied to our setting. Among recent contributions in the zero-mass setting, we also mention \cite{CFFM21}, where the authors considered the equation
$$
-\Delta_N u-\Delta_q u=f(u)
\quad \text{in } \mathbb{R}^N,
$$
with a nonlinearity exhibiting critical exponential growth in the Trudinger--Moser sense.

Other results  concerning existence, multiplicity, normalized solutions, critical growth, and jumping nonlinearities for $(p,q)$-Laplacian equations, even in the fractional case, can be found for instance, in \cite{AIS19, AOS20, DQOL26, RGP25}.

Finally we cite the paper 
\cite{HL08} where the regularity of weak solutions
for the $(p,q)$-Laplacian equation has been addressed.

\medskip

Before stating our main results, we introduce some notation and definitions. For \(m\in(1,N)\), let 
$$
\mathcal{D}^{1,m}(\mathbb{R}^N)=\left\{u\in L^{m^*}(\mathbb R^N): |\nabla u|\in L^m(\mathbb R^N) \right\}
$$
the homogeneous Sobolev space which is a reflexive Banach space when endowed with the norm
$$
\|u\|_{\mathcal{D}^{1,m}}: =
\|\nabla u\|_m^m=\left(\int_{\mathbb R^{N}}|\nabla u|^m \right)^{1/m}.
$$
Consequently, the space
$E=:\mathcal{D}^{1,p}(\mathbb{R}^N)\cap \mathcal{D}^{1,q}(\mathbb{R}^N)$,
endowed with the norm
$$\| u\|_E =\Big(\|u\|^p_{\mathcal D^{1,p}} + \|u\|_{\mathcal D^{1,q}}^{p/q}\Big)^{1/p}$$
is a reflexive Banach space. Furthermore, the classical Gagliardo–Nirenberg–Sobolev inequality implies that the embedding 
\begin{equation}\label{Principal-Embedding}
E\hookrightarrow L^t(\mathbb{R}^N), \quad\mbox{for any}\quad q^*\leq t\leq p^*
\end{equation}
is continuous.

Here, by a weak solution of equation \eqref{P} we mean a function $u \in E$ such that for any $\varphi \in E$ it holds
\begin{equation} \label{idweak}
\int_{\mathbb{R}^N}\left(|\nabla u|^{p-2}\nabla u+|\nabla u|^{q-2}\nabla u\right)\nabla\varphi=\int_{\mathbb{R}^N}\left(|u|^{r-2} u+\lambda|u|^{s-2} u\right)\varphi. 
\end{equation}

In this paper, our aim is to establish existence and nonexistence results depending on the sign and magnitude of the parameter $\lambda$, and on whether the exponents $r,s$ are subcritical or supercritical relative to the critical exponents $q^*$ and $p^*$, highlighting the interplay between these features.
\medskip

We first consider the case where the real parameter $\lambda$ is positive. Our main existence results for equation \eqref{P} are stated below.\begin{theorem}
[$\lambda>0$, subcritical case]\label{T1} 
Suppose $\lambda>0$ and that $q^*< r,s< p^*$. Then the following assertions hold:
\begin{enumerate}[label=(\alph*), ref=\alph*]
    \item \label{pmenor}  if $p<\min\{r,s\}$, then for any $\lambda>0$, the equation \eqref{P} has a positive weak  solution, \smallskip

    \item \label{pnomeio}if $s<p<r$, then there exists a sufficiently small $\lambda_0 > 0$ such that for every $\lambda \in (0, \lambda_0)$, the equation \eqref{P} admits a positive weak solution. 
\end{enumerate}
\end{theorem}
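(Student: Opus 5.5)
The plan is to obtain a positive solution as a mountain pass critical point of the energy functional
$$
I_\lambda(u)=\frac1p\int_{\mathbb R^N}|\nabla u|^p+\frac1q\int_{\mathbb R^N}|\nabla u|^q-\frac1r\int_{\mathbb R^N}(u^+)^r-\frac{\lambda}{s}\int_{\mathbb R^N}(u^+)^s,\qquad u\in E .
$$
By the embedding \eqref{Principal-Embedding}, $I_\lambda\in C^1(E,\mathbb R)$. Testing the equation with $u^-$ shows that every critical point is nonnegative. Positivity will then follow from the strong maximum principle for the $(p,q)$-Laplacian, once regularity (local boundedness and $C^{1,\alpha}$) is obtained via Moser iteration.

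\textbf{Mountain pass geometry.} For small $\|u\|_E$ we have $\|u\|_{\mathcal D^{1,p}}\le 1$ and $\|u\|_{\mathcal D^{1,q}}\le1$. The Sobolev inequalities in $\mathcal D^{1,p}$ and $\mathcal D^{1,q}$, combined with interpolation between $q^*$ and $p^*$, give $\|u\|_t^t\le C(\|\nabla u\|_p^{a t}\|\nabla u\|_q^{(1-a)t})$. The key fact is that, since $t>q^*>q$ and $t>p$ is not always true, one must check that the resulting powers beat the leading terms $\frac1p\|\nabla u\|_p^p+\frac1q\|\nabla u\|_q^q$.

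In case (\ref{pmenor}), both $r,s>p>q$. I expect that a split of $u$ into regions where $|u|$ is small or large, or the standard estimate $\int|u|^t\le C(\|\nabla u\|_p^p+\|\nabla u\|_q^q)^{\theta}$ with $\theta>1$ (the zero-mass analogue from \cite{BF23,BBF22}), yields $I_\lambda\ge \alpha>0$ on a small sphere. Along $tv$ with $v\ge0$ fixed, the $r$-term dominates since $r>p$, so $I_\lambda(tv)\to-\infty$.

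In case (\ref{pnomeio}), $s<p$ spoils this estimate, because the $\lambda$-term may dominate the $p$-term. Here I would fix a radius $\rho$ with $\inf_{\|u\|=\rho}(\text{terms without }\lambda)\ge 2\alpha$, and then choose $\lambda_0$ so small that $\frac{\lambda}{s}\sup_{\|u\|=\rho}\|u\|_s^s<\alpha$. This uses only the boundedness of the embedding and produces the restriction on $\lambda$.

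\textbf{Palais--Smale sequences and compactness.} This is the main obstacle, since the problem is zero-mass and $E$ does not embed compactly. For boundedness of a (PS)$_c$ sequence in case (\ref{pmenor}), I use $I_\lambda(u_n)-\frac1{\mu}I_\lambda'(u_n)u_n$ with $p<\mu=\min\{r,s\}$. In case (\ref{pnomeio}), I use $\mu=r$ and absorb the term $\lambda(\frac1s-\frac1r)\|u_n\|_s^s\le C\lambda\|u_n\|_E^s$, which is sublinear relative to $\|u_n\|_E^q$ or $\|u_n\|_E^p$ because $s<p$, possibly after shrinking $\lambda_0$. The case distinctions between $\|\nabla u_n\|$ being above or below $1$ need care because of the nonhomogeneous norm.

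To recover compactness, I would restrict $I_\lambda$ to the radial subspace $E_{rad}$; the principle of symmetric criticality applies. For radial functions, a Strauss-type decay estimate gives compactness of $E_{rad}\hookrightarrow L^t$ for $q^*<t<p^*$. This is exactly the strict range assumed for $r$ and $s$, obtained by interpolating the pointwise decay bounds coming from $\mathcal D^{1,q}$ near infinity and $\mathcal D^{1,p}$ near the origin.

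Given a bounded (PS) sequence, I pass to a weakly convergent subsequence. The nonlinear terms then converge strongly, and the $(S_+)$ property of $-\Delta_p-\Delta_q$ gives strong convergence in $E$. This produces a nontrivial critical point at level $c\ge\alpha>0$, and that critical point is the desired positive solution.
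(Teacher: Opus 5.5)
\textbf{Gap in part (b): boundedness of the Palais--Smale sequence.} From $I_\lambda(u_n)-\frac{1}{r}I_\lambda'(u_n)u_n$ you get
\[
c+o_n(1)\bigl(1+\|u_n\|_E\bigr)\ \ge\ \frac{r-p}{pr}\bigl(\|\nabla u_n\|_p^p+\|\nabla u_n\|_q^q\bigr)-\lambda\,\frac{r-s}{rs}\int_{\mathbb{R}^N}u_{n+}^s .
\]
You then propose to absorb the last term, bounded by $C\lambda\|u_n\|_E^s$, because $s<p$. This does not work.

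For large $\|u_n\|_E$ the gradient terms are in general only of order $\|u_n\|_E^{q}$, not $\|u_n\|_E^{p}$, because the $\mathcal{D}^{1,q}$-part can carry the norm. Since $s>q^*>q$, the term $\lambda\|u_n\|_E^s$ is superlinear with respect to what you control, for every $\lambda>0$. Shrinking $\lambda_0$ cannot give a bound uniform in $n$.

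A sharper interpolation does not help either. For $u_R(x)=u(x/R)$ one has $\|\nabla u_R\|_p^p=R^{N-p}\|\nabla u\|_p^p$, $\|\nabla u_R\|_q^q=R^{N-q}\|\nabla u\|_q^q$ and $\|u_R\|_s^s=R^{N}\|u\|_s^s$. So the $s$-term beats the gradient terms by a factor of order $R^{q}$, and no inequality $\int|u|^s\le\varepsilon(\|\nabla u\|_p^p+\|\nabla u\|_q^q)+C_\varepsilon$ holds on $E$.

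Also, $s<p$ does not spoil the geometry near $0$, contrary to what you wrote. Interpolating between $L^{q^*}$ and $L^{p^*}$ gives $\int|u|^s\le C(\|\nabla u\|_p^p+\|\nabla u\|_q^q)^{\theta}$ with $\theta=\frac{(1-a)s}{q}+\frac{as}{p}>1$. What breaks when $s<p$ is the Ambrosetti--Rabinowitz-type bound.

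\textbf{The missing idea and a comparison for part (a).} The paper handles (b) by truncating the $s$-term. It replaces $\frac{\lambda}{s}\int u_+^s$ by $\frac{\lambda}{s}\Psi(\|u\|_E^p/L^p)\int u_+^s$, with $\Psi\equiv 1$ on $[0,1]$ and $\Psi\equiv 0$ on $[2,\infty)$. The truncated functionals then work as follows:
\begin{itemize}
\item They have a mountain pass geometry uniform in $L$ and in small $\lambda$, with levels bounded by the level $c_0$ of the $\lambda=0$ functional.
\item For $\|u\|_E\ge 2L$ the $s$-term is absent.
\item On $L<\|u\|_E<2L$ the $s$-term, including the $\Psi'$-contribution, is at most $C\lambda(2L)^s\le 1$ once $\lambda\lesssim(2L)^{-s}$.
\end{itemize}
For $L$ large, the estimate for $J_{\lambda,L}(u_n)-\frac{1}{r}J_{\lambda,L}'(u_n)u_n$ then forces the sequence into $\{\|u\|_E\le L\}$, where $J_{\lambda,L}=I_\lambda$. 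This is where the smallness of $\lambda$ is genuinely needed. Since $\|\cdot\|_E$ is rotation invariant, this truncation is compatible with your radial framework.

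For part (a) your argument is correct but takes a different route from the paper. The paper stays in all of $E$: it proves non-vanishing via $I_\lambda-\frac{1}{q}I_\lambda'$, applies its Lions-type lemma, translates, and passes to the limit through local strong convergence of gradients, obtaining only a nontrivial weak limit. You instead restrict to $E_{\mathrm{rad}}$, where $|u(x)|\le C\|u\|_E\min\{|x|^{-(N-p)/p},|x|^{-(N-q)/q}\}$. This bound makes $E_{\mathrm{rad}}\hookrightarrow L^t$ compact for $q^*<t<p^*$, so the Palais--Smale condition holds and $(S_+)$ yields a radial solution at the radial mountain pass level. The cost is invoking symmetric criticality.

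Finally, your hedged geometry argument in (a) is unnecessary. For $\|u\|_E<1$ one has $\frac{1}{p}\|\nabla u\|_p^p+\frac{1}{q}\|\nabla u\|_q^q\ge\frac{1}{p}\|u\|_E^p$, and $r,s>p$ there.
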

\begin{theorem}
[$\lambda>0$, supercritical case] 
\label{th:supercritical}
Suppose $\lambda>0$ and that $q^* < r < p^* < s$. If $p<r$, then there exists a sufficiently small $\lambda_0 > 0$ such that for any $\lambda \in (0, \lambda_0)$, equation \eqref{P} admits a positive weak solution. 
\end{theorem}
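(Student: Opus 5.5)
We handle the supercritical term $\lambda|u|^{s-2}u$ with $s>p^*$ by truncation, and then remove the truncation through an $L^\infty$ bound that does not depend on $\lambda$. Fix $K>0$, to be chosen later, and set
$$
f_K(t)=\begin{cases} 0, & t\le 0,\\ t^{s-1}, & 0<t\le K,\\ K^{s-p}t^{p-1}, & t>K.\end{cases}
$$
Note that $f_K(t)\le K^{s-p}t^{p-1}$ for all $t\ge0$. Near the origin, however, $f_K$ is still $O(t^{s-1})$ with $s>q^*$. We then replace this term by $f_K$ and study the truncated problem
$$
-\Delta_p u-\Delta_q u=(u^+)^{r-1}+\lambda f_K(u)\quad\text{in }\mathbb R^N.
$$
For the energy estimates it is convenient to modify $f_K$ further, so that it is subcritical at both ends. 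One option is $\min\{t^{s-1},K^{s-\sigma}t^{\sigma-1}\}$ for some fixed $\sigma\in(\max\{p,q^*\},p^*)$. Then $\lambda F_K$ is controlled on $E$ by the embedding \eqref{Principal-Embedding}, since $f_K(t)\le \min\{K^{s-\sigma}t^{\sigma-1}, t^{s-1}\}\le K^{s-\sigma}t^{\sigma-1}$. The functional
$$
I_{\lambda,K}(u)=\tfrac1p\|\nabla u\|_p^p+\tfrac1q\|\nabla u\|_q^q-\tfrac1r\|u^+\|_r^r-\lambda\int F_K(u)
$$
is then $C^1$ on $E$, and the problem is of the same type as in Theorem~\ref{T1}(\ref{pmenor}), with $r,\sigma\in(q^*,p^*)$ and $p<r,\sigma$. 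Hence I expect the argument of Theorem~\ref{T1} to apply almost verbatim. This argument consists of three steps. First, a mountain pass geometry, which follows from $\|u\|_t\lesssim\|u\|_E$ for $q^*\le t\le p^*$ together with the superhomogeneity $r,\sigma>p$. Second, Ambrosetti--Rabinowitz for the truncated term. With the $\sigma$-type truncation we have $\sigma F_K(t)\le f_K(t)t$, and $\mu:=\min\{r,\sigma\}>p$, so Palais--Smale/Cerami sequences are bounded in $E$. Third, recovery of a nontrivial critical point despite the lack of compactness on $\mathbb R^N$. We invoke here whatever device was used for Theorem~\ref{T1}, for instance working in the radial subspace $E_{\rm rad}$ (Strauss-type compactness into $L^t$, $q^*<t<p^*$) together with Palais' symmetric criticality. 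Testing with $u^-$ shows $u\ge0$, and the strong maximum principle gives $u>0$.

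The key point is that the mountain pass level is uniform in $\lambda$ and $K$. Since $F_K\ge0$, we have $c_{\lambda,K}\le c_0:=\max_{t\ge0}I_0(te)$ for a fixed $e$, where $I_0$ is the functional with $\lambda=0$. Combined with the Ambrosetti--Rabinowitz inequality this yields $\|u_{\lambda,K}\|_E\le C_0$, with $C_0$ independent of $\lambda\in(0,1]$ and of $K$.

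The main obstacle is an $L^\infty$ estimate for $u=u_{\lambda,K}$ that sees only the $E$-norm bound and the growth $|g(u)|\le u^{r-1}+\lambda K^{s-\sigma}u^{\sigma-1}$. Here both $r-1$ and $\sigma-1$ are below $p^*-1$, so the equation is subcritical. I would run a Moser iteration (or the De Giorgi type estimate used for $(p,q)$-Laplacians, e.g. via \cite{HL08}), testing with $u\min\{u,L\}^{p(\beta-1)}$ and discarding the $q$-Laplacian term, which has the right sign. The outcome should be
$$
\|u\|_\infty\le C\big(C_0,\lambda K^{s-\sigma}\big),
$$
with $C$ increasing in its second argument. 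Now fix $K$ large, specifically $K>C(C_0,1)$, and choose $\lambda_0=\min\{1,K^{\sigma-s}\}$. For $\lambda<\lambda_0$ we get $\lambda K^{s-\sigma}\le1$, hence $\|u\|_\infty\le C(C_0,1)<K$. Therefore $f_K(u)=u^{s-1}$, and $u$ is a positive weak solution of \eqref{P}. It still has to be checked that $u^{s-1}\varphi\in L^1$ for $\varphi\in E$. This holds because $u\in L^\infty\cap L^{p^*}$, so $u^{s-1}\le \|u\|_\infty^{s-p^*}u^{p^*-1}$.

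The delicate point is to make the Moser constants depend only on $C_0$ and on the coefficient $\lambda K^{s-\sigma}$. If the iteration, which starts from $L^{p^*}$, turns out to be delicate on $\mathbb R^N$, I would first use $u\in E_{\rm rad}$ and the radial decay $u(x)\le C\|u\|_E|x|^{-(N-p)/p}$ to bound $u$ outside a ball. After that, only a local estimate on a fixed ball is needed.
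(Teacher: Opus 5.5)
Your proposal is correct and follows the paper's strategy: truncate above height $K$ to subcritical growth, and bound the mountain pass level by that of $I_0$, so that the $E$-norm (hence the $L^{p^*}$-norm) of the solution is controlled independently of $\lambda$ and $K$; then run a global Moser iteration with test function $u\min\{u,L\}^{p(\beta-1)}$, whose constant depends only on this bound and on $\lambda K^{s-r}$ (your $\lambda K^{s-\sigma}$), and finally fix $K$ large and $\lambda$ small. The differences are minor: the paper truncates the whole right-hand side to $(1+\lambda K^{s-r})t^{r-1}$ beyond $K$ (your construction with $\sigma=r$, which also keeps the Moser step a single-power estimate), and it recovers a nontrivial critical point through Lions' lemma and translations rather than through $E_{\mathrm{rad}}$ plus symmetric criticality, although your radial route works equally well.
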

When the parameter $\lambda$ is negative, our main result is stated as follows.
\begin{theorem}[$\lambda<0$, subcritical case] \label{T2}
Assume that $q^* < r,s < p^*$. Then the following assertions hold:
\begin{enumerate}[label=(\alph*), ref=\alph*]
    \item \label{rmaior} if $r>\max\{s,p\}$, then for any $\lambda<0$, the equation \eqref{P} has a positive weak solution, \smallskip
    \item \label {rnomeio} if $p<r<s$, then there exists $\lambda_0 < 0$ sufficiently close to $0$ such that, for any $\lambda \in (\lambda_0, 0)$ equation \eqref{P} has a positive weak solution.
\end{enumerate}
\end{theorem}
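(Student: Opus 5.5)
We plan to prove Theorem~\ref{T2} with the mountain pass theorem applied to the energy functional $I_\lambda\colon E\to\mathbb R$,
$$
I_\lambda(u)=\frac1p\|\nabla u\|_p^p+\frac1q\|\nabla u\|_q^q-\frac1r\int_{\mathbb R^N}(u^+)^r-\frac{\lambda}{s}\int_{\mathbb R^N}(u^+)^s .
$$
Since $q^*<r,s<p^*$, the embedding \eqref{Principal-Embedding} gives $I_\lambda\in C^1(E)$. The critical points of $I_\lambda$ are weak solutions of \eqref{P}, and testing with $u^-$ shows they are nonnegative. Positivity will then follow from the $L^\infty$ and $C^{1,\alpha}$ regularity for $(p,q)$-Laplacian equations (cf.\ \cite{HL08}) combined with the strong maximum principle. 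With $\lambda<0$, the $s$-term $-\frac{\lambda}{s}\int (u^+)^s$ is nonnegative and only helps the lower bounds.

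\textbf{Mountain pass geometry.} Near the origin, the embedding gives $\int (u^+)^r\le C\|u\|_E^{r}$ when $\|u\|_E\le1$. This uses $\|\nabla u\|_p\le\|u\|_E$, the bound $\|\nabla u\|_q\le\|u\|_E^{p/q}$, and interpolation between $q^*$ and $p^*$. The kinetic part is bounded below by $c\min\{\|\nabla u\|_p^p+\|\nabla u\|_q^q\}\ge c\|u\|_E^p$ on small balls. Because $r>p$, we obtain $I_\lambda\ge\alpha>0$ on a small sphere, regardless of the $\lambda$-term.
\begin{itemize}
\item In case (\ref{rmaior}), for fixed $\varphi\ge0$ we have $I_\lambda(t\varphi)\to-\infty$ as $t\to\infty$, since $r>\max\{p,s\}$ makes the $t^r$ term dominate $t^p$, $t^q$ and $|\lambda|t^s$.
\item In case (\ref{rnomeio}) with $r<s$, the term $|\lambda|t^s$ eventually dominates. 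Instead, we fix $\varphi$ and $T$ with $I_0(T\varphi)<0$ and choose $|\lambda|$ small so that $I_\lambda(T\varphi)<0$ still holds. This yields the threshold $\lambda_0$.
\end{itemize}

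\textbf{Palais--Smale sequences.} Next we show that $(PS)_c$ sequences are bounded.
\begin{itemize}
\item In case (\ref{rmaior}), we compute $I_\lambda(u_n)-\frac1r I_\lambda'(u_n)u_n$. The kinetic coefficients are $\frac1p-\frac1r>0$ and $\frac1q-\frac1r>0$, and the $s$-term coefficient is $-\lambda(\frac1s-\frac1r)\ge0$ since $s<r$. This bounds $\|\nabla u_n\|_p^p+\|\nabla u_n\|_q^q$.
\item In case (\ref{rnomeio}), where $s>r$, the same trick fails. We will use a truncation: replace $|\lambda|(u^+)^s/s$ by a cutoff $\psi(\|u\|_E)$ times it, obtaining a modified functional. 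The modified functional has the same mountain pass level estimate, bounded PS sequences at levels below a fixed bound, and for $|\lambda|$ small its critical point satisfies $\|u\|_E$ below the cutoff radius, so it solves \eqref{P}. A simpler alternative is to minimize over the Nehari manifold, using the a priori bound $c_\lambda\le c_0$ uniform in $\lambda\in(\lambda_0,0)$.
\end{itemize}

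\textbf{Main obstacle: lack of compactness.} The hardest step is recovering a nontrivial critical point in this zero-mass, translation- and dilation-affected setting. We plan to use Lions' concentration compactness in $E$. Since $r,s$ lie strictly between $q^*$ and $p^*$, vanishing $\sup_y\int_{B_1(y)}|u_n|^{q^*}\to0$ forces $u_n\to0$ in $L^r\cap L^s$ by the Lions lemma in $\mathcal D^{1,p}\cap\mathcal D^{1,q}$. That would give $I_\lambda(u_n)\to$ a kinetic limit that is $\le 0$ and contradict $c\ge\alpha>0$. Hence after translation $u_n\rightharpoonup u\neq0$. We pass to the limit in the weak formulation using a.e.\ convergence of gradients, obtained via the Boccardo--Murat type argument for the monotone operator $-\Delta_p-\Delta_q$. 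This makes $u$ a nontrivial weak solution, and positivity follows as above.
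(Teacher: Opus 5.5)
Your proposal is correct and follows essentially the paper's route: mountain pass geometry, the $I_\lambda-\frac1r I_\lambda'$ estimate for boundedness in (a), a norm cutoff $\Psi(\|u\|_E^p/L^p)$ on the $s$-term with $L$ large and $|\lambda|$ small in (b), then nonvanishing, Lions' lemma with translations, and local strong convergence of gradients via monotonicity. The only slip is the optional Nehari ``simpler alternative'', which you should drop. For $\lambda<0$ one has $I_\lambda\ge I_0$, so $c_\lambda\ge c_0$, not $\le$; the uniform bound the paper actually uses is $c_\lambda\le c_{\lambda_1}$ for $\lambda\in[\lambda_1,0)$. Moreover, since $s>r$ the fibering maps $t\mapsto I_\lambda(t\varphi)$ tend to $+\infty$, so that route would not avoid the boundedness issue the truncation is there to handle.
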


As we will see in Section~\ref{sec:RegPosit}, by using standard arguments, the solutions obtained in all of the previous cases are in fact of class $C^{1,\alpha}_{\mathrm{loc}}(\mathbb{R}^N)$ for some $\alpha\in(0,1)$.\\

Finally we present also a nonexsistence result.
\begin{theorem}\label{th:NE}
There exists no nontrivial weak solution of problem
\eqref{P} in $E\cap W^{2,p}_{\emph{loc}}(\mathbb R^N)$ under the following conditions on the parameter $\lambda$ and the powers $q,p,r,s$:
\begin{enumerate}
    \item[(a)] if $\lambda > 0$ and
    \[
        p^* \leq \min\{r,s\}
        \quad \text{or} \quad
        \max\{r,s\} \leq q^*;
    \]

    \item[(b)] if $\lambda < 0$ and
    \[
        s \leq p^* \leq r
        \quad \text{or} \quad
        r \leq q^* \leq s.
    \]
\end{enumerate}
\end{theorem}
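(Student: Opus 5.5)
The argument combines a Pohozaev-type identity with the Nehari (energy) identity. Let $u\in E\cap W^{2,p}_{\rm loc}(\mathbb R^N)$ be a weak solution of \eqref{P}. Testing \eqref{idweak} with $\varphi=u$, which is admissible since $u\in E$, gives
$$
\|\nabla u\|_p^p+\|\nabla u\|_q^q=\|u\|_r^r+\lambda\|u\|_s^s .
$$
This is legitimate only if $u\in L^r\cap L^s$. In the supercritical ranges this is not automatic from the embedding \eqref{Principal-Embedding}, so I would either include it in the notion of solution, which the right-hand side of \eqref{idweak} implicitly requires, or argue via the cut-off below.

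Next I derive the Pohozaev identity
$$
\frac{N-p}{p}\|\nabla u\|_p^p+\frac{N-q}{q}\|\nabla u\|_q^q=\frac{N}{r}\|u\|_r^r+\frac{\lambda N}{s}\|u\|_s^s .
$$
I would multiply the equation by $(x\cdot\nabla u)\,\psi_R(x)$, where $\psi_R(x)=\psi(x/R)$ is a smooth cut-off. The $W^{2,p}_{\rm loc}$ regularity is what justifies the pointwise product rule $\mathrm{div}(|\nabla u|^{m-2}\nabla u)(x\cdot\nabla u)$ and the identity
$$
|\nabla u|^{m-2}\nabla u\cdot\nabla(x\cdot\nabla u)=|\nabla u|^m+\tfrac1m\, x\cdot\nabla(|\nabla u|^m).
$$
For the $q$-term, note that $u\in W^{2,p}_{\rm loc}\subset W^{2,q}_{\rm loc}$. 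After integrating by parts, the error terms carry the factor $x\cdot\nabla\psi_R$, which is bounded and supported in the annulus $R<|x|<2R$. They tend to $0$ as $R\to\infty$ by dominated convergence, since $|\nabla u|^p,|\nabla u|^q,|u|^r,|u|^s\in L^1$. \textbf{This limiting procedure, including the integrability of $|u|^r,|u|^s$ in the supercritical ranges, is the main technical obstacle.} I would handle it by the standard Pucci--Serrin/Guedda--Véron cut-off argument, choosing $R_n\to\infty$ along a sequence if needed.

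With $A=\|\nabla u\|_p^p$, $B=\|\nabla u\|_q^q$, $C=\|u\|_r^r$ and $D=\|u\|_s^s$, subtract the Nehari identity multiplied by $N/p^*=(N-p)/p$ from the Pohozaev identity:
$$
\Big(\tfrac{N-q}{q}-\tfrac{N-p}{p}\Big)B=N\Big(\tfrac1r-\tfrac1{p^*}\Big)C+\lambda N\Big(\tfrac1s-\tfrac1{p^*}\Big)D .
$$
The left-hand coefficient is positive because $q<p$. Similarly, subtracting the Nehari identity multiplied by $N/q^*$ gives
$$
\Big(\tfrac{N-p}{p}-\tfrac{N-q}{q}\Big)A=N\Big(\tfrac1r-\tfrac1{q^*}\Big)C+\lambda N\Big(\tfrac1s-\tfrac1{q^*}\Big)D ,
$$
where now the left-hand coefficient is negative.

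The case analysis then runs as follows.
\begin{enumerate}
\item[(a)] Let $\lambda>0$. If $p^*\le\min\{r,s\}$, the right side of the first relation is $\le0$ while its left side is $\ge0$. Hence $B=0$, so $\nabla u=0$ and $u=0$, since $u\in L^{q^*}$. If $\max\{r,s\}\le q^*$, the right side of the second relation is $\ge0$ while its left side is $\le0$. Hence $A=0$ and again $u=0$.
\item[(b)] Let $\lambda<0$. If $s\le p^*\le r$, then $\frac1r-\frac1{p^*}\le0$ and $\lambda(\frac1s-\frac1{p^*})\le0$, so the first relation forces $B=0$. If $r\le q^*\le s$, then $\frac1r-\frac1{q^*}\ge0$ and $\lambda(\frac1s-\frac1{q^*})\ge0$, so the second relation forces $A=0$.
\end{enumerate}
In every case $u\equiv0$, so problem \eqref{P} has no nontrivial weak solution in $E\cap W^{2,p}_{\rm loc}(\mathbb R^N)$ under the stated conditions.
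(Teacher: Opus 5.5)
Your proposal is correct and follows essentially the same route as the paper: a Pohozaev identity obtained by testing \eqref{idweak} with $\eta_R(x)\,(x\cdot\nabla u)$ and letting $R\to\infty$, combined with the Nehari identity to give the two relations for $\int|\nabla u|^q$ and $\int|\nabla u|^p$, followed by the same sign analysis. The integrability of $|u|^r,|u|^s$ outside $[q^*,p^*]$ that you flag is tacitly assumed in the paper's proof as well (its limit $\int_{|x|>R}\big(|u|^r/r+|\lambda|\,|u|^s/s\big)\to 0$ relies on it), so your caveat is, if anything, more careful than the original.
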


\begin{remark} \rm
For the case $\lambda=0$ by using the same approach employed in Theorem \ref{T1} \eqref{pmenor}, 
one can obtain the existence of a nontrivial solution
provided that
$p<r$. Moreover, the nonexistence result holds if
$r\leq q^*$ or $p^*\leq r$.
\end{remark}

\begin{remark} \rm
The limit cases $r=q^{*}$ or $s=q^{*}$ are not covered by the arguments used here. Indeed, the non vanishing of the Palais-Smale sequence may occur  in the $L^{q^{*}}(\mathbb{R}^N)$-norm, preventing the direct application of Lemma \ref{Lions-Lemma}.
\end{remark}

Since the problem is variational, the solutions will be found as critical points of a suitable energy functional. 
The main difficulties in proving our existence results are due to the different degree of homogeneity of the nonlinearity which causes a different behavior in the geometry of the functional when interacting with the $(p,q)$-Laplacian. We remark also that no potentials are present in the equation so a further difficulty is related to obtaining compactness. To overcome these difficulties suitable truncation arguments are implemented as well Lions-type concentration-compactness techniques.

\medskip
The paper is organized as follows. In Section \ref{sec:prelim} we give some preliminaries which are useful to implement variational methods. Then Section \ref{sec:Th1.2} is devoted to prove the existence results, Theorem \ref{T1}, 
Theorem \ref{th:supercritical} and Theorem \ref{T2}. Section~\ref{sec:RegPosit} is devoted to proving the positivity and $C^{1, \alpha}_\textrm{loc}(\mathbb R^N)$ regularity of the solutions obtained. Finally in Section \ref{sec:NE} we prove Theorem \ref{th:NE} after establishing a Pohozaev identity.

\medskip
\noindent {\bf Notations.}
We introduce few basic notations and conventions used  through the paper.

\begin{itemize}
\item $B_R(y)$ will denote the ball in $\mathbb R^N$ centered in $y$ and with radius $R>0$;
\item if $A\subset \mathbb R^N$ is measurable, its Lebesgue measure is denoted with $|A|$;
\item in the integrals the Lebesgue measure $dx$ is always omitted;
\item $C_r,C_s>0$  denote the embedding constants of $E$ into $L^r(\mathbb R^N)$ and $L^s(\mathbb R^N)$ respectively;
\item the norm in $L^p$ is denoted with $\| \cdot \|_p$ whenever the domain is the whole space
and with $\| \cdot \|_{L^p(A)}$ if $A\neq \mathbb R^N$;
\item $o_n(1)$ is a generic vanishing sequence;
\item $u_+ = \max\{u,0\}$ and $u_- = - \min\{u,0\}$.
\end{itemize}

Other notations will be introduced whenever we need.

\section{Preliminary Results}\label{sec:prelim}
In this Section we give useful results that we will use through the paper.

First of all observe  that the norm $\|u\|_E$ defined in the previous Section and $\|u\|:= \|u\|_{\mathcal D^{1,p}} + \|u\|_{\mathcal D^{1,q}}$ are equivalent on $E$.
Indeed, for any $a,b$ nonnegative real numbers, let us say $a\leq b$, we have
$$
a+b\leq 2b\leq2(a^p+b^p)^{1/p}\leq2^{1+\frac{1}{p}}b\leq2^{1+\frac{1}{p}}(a+b).
$$
Taking $a=\left(\displaystyle\int_{\mathbb R^N}|\nabla u|^p\right)^{1/p}$ and $b=\left(\displaystyle\int_{\mathbb R^N}|\nabla u|^q\right)^{1/q}$ in the above inequalities we obtain the equivalence. Moreover from Young's inequality, we have, for $t>1$, 
$$
\|\nabla u\|_{t} \leq\frac{1}{t} \|\nabla u\|_{t}^t+\frac{t-1}{t}\leq \|\nabla u\|_{t}^t+1.
$$
Hence by adding the two inequalities obtained  for $t=p$ and $t=q$ we get
the usefull inequalities
\begin{equation}\label{eq:usefull}
  \kappa  \|u\|_E \le \| \nabla u\|_p +\| \nabla u\|_q \le
    \| \nabla u\|_p^p + \| \nabla u\|_q^q+2, \quad \kappa = 2^{-1/p}.
\end{equation}

The following result is standard but fundamental for our purpose. We give the details for the reader convenience.
\begin{proposition}\label{Densidade}
  The space  $C_0^\infty(\mathbb{R}^N)$ is dense in $E$.
\end{proposition}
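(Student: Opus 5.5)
The plan is to approximate a given $u\in E$ in two steps, first by compactly supported functions of $E$ and then by smooth ones. The point is to use the \emph{same} approximating sequence for both seminorms $\|\nabla\cdot\|_p$ and $\|\nabla\cdot\|_q$, and to control the truncation error using only $u\in L^{q^*}\cap L^{p^*}$.

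\textbf{Step 1 (cut-off).} Fix $\eta\in C_0^\infty(\mathbb R^N)$ with $0\le\eta\le1$, $\eta\equiv1$ on $B_1(0)$, $\operatorname{supp}\eta\subset B_2(0)$. Set $\eta_R(x)=\eta(x/R)$ and $u_R=\eta_R u$. Then $u_R\to u$ in $L^{m^*}$ for $m=p,q$ by dominated convergence. For the gradients,
$$
\nabla u_R-\nabla u=(\eta_R-1)\nabla u+u\nabla\eta_R .
$$
The first term tends to $0$ in $L^m$, again by dominated convergence. For the second, since $|\nabla\eta_R|\le C/R$ and $\nabla\eta_R$ is supported in the annulus $A_R=B_{2R}\setminus B_R$, Hölder's inequality with exponents $m^*/m$ and $N/m$ gives
$$
\int|u|^m|\nabla\eta_R|^m\le \frac{C}{R^m}\,|A_R|^{m/N}\Big(\int_{A_R}|u|^{m^*}\Big)^{m/m^*}\le C'\Big(\int_{|x|\ge R}|u|^{m^*}\Big)^{m/m^*}\to0 ,
$$
because $|A_R|^{m/N}\sim R^m$ exactly cancels the factor $R^{-m}$. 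This works for both $m=p$ and $m=q$, using $u\in L^{p^*}$ and $u\in L^{q^*}$ respectively, so $u_R\to u$ in $E$. I expect this to be the only delicate point: on the homogeneous spaces one cannot use an $L^m$ bound on $u$ itself, and it is this scaling cancellation that replaces it.

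\textbf{Step 2 (mollification).} Now let $v=u_R$, which has support in $B_{2R}$. Then $v\in W^{1,p}(B_{2R})\cap W^{1,q}(B_{2R})$, and since $q<p$ on a bounded set we simply have $v\in W^{1,p}_0$-type regularity with compact support. Take a standard mollifier $\rho_\varepsilon$ and set $v_\varepsilon=\rho_\varepsilon*v\in C_0^\infty(B_{2R+1})$. Then $\nabla v_\varepsilon=\rho_\varepsilon*\nabla v\to\nabla v$ in $L^p$ and in $L^q$. Moreover $v_\varepsilon\to v$ in $L^{p^*}$ and in $L^{q^*}$, because $v\in L^{p^*}\cap L^{q^*}$ and mollification converges in every $L^t$ with $t<\infty$. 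Hence $v_\varepsilon\to v$ in $E$.

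\textbf{Conclusion.} A diagonal argument finishes the proof: given $\epsilon>0$, choose $R$ with $\|u_R-u\|_E<\epsilon/2$ and then $\varepsilon$ with $\|v_\varepsilon-u_R\|_E<\epsilon/2$. Throughout, the norm $\|\cdot\|$ equivalent to $\|\cdot\|_E$ noted above makes convergence in $E$ equivalent to convergence of both gradient norms, which is what Steps 1 and 2 establish.
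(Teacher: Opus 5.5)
Your proof is correct. It follows the same two-step scheme as the paper: cut off with $\eta_R u$, mollify, then finish with an $\varepsilon/2$ argument. The genuine difference is in how you make the term $u\nabla\eta_R$ vanish, and there your argument is the sound one.

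The paper writes $\|u\nabla\eta_R\|_p\le \frac{2}{R}\|u\|_{L^p(A_R)}\le \frac{C}{R}\|\nabla u\|_p$, citing a Sobolev inequality on bounded domains. No such bound holds with $C$ independent of $R$. Replacing $u$ by $u(\mu x)$ shows that the best constant $C_R$ in $\|u\|_{L^p(A_R)}\le C_R\|\nabla u\|_p$ satisfies $C_{\mu R}=\mu C_R$, so $C_R=R\,C_1$. (It is finite, e.g.\ via $\|u\|_{L^p(A_R)}\le |A_R|^{1/N}\|u\|_{p^*}$ and Sobolev.) That route therefore only shows $\|u\nabla\eta_R\|_p$ stays bounded, not that it tends to zero.

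Your Hölder estimate on the annulus, with exponents $m^*/m$ and $N/m$, is what actually gives the decay. The factor $|A_R|^{m/N}\sim R^m$ cancels $R^{-m}$ exactly and leaves the tail $\|u\|_{L^{m^*}(\{|x|\ge R\})}$. It also makes clear that the $L^q$ part needs $u\in L^{q^*}$, not merely $u\in L^{p^*}$; this is where the intersection structure of $E$ enters. So your Step 1 supplies the justification that the paper's version is missing.
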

\begin{proof}
Let $u\in E$ and consider  $\eta_R\in C_0^\infty(\mathbb{R}^N)$ such that 
$$
0\le \eta_R\le1,\quad \eta_R=1 \ \text{em } B_R(0),\quad  \eta_R=0 \ \text{em } \mathbb{R}^N \setminus B_{2R}(0)\quad \text{and} \quad  |\nabla \eta_R|\le \frac{2}{R}.
$$
Define
\[
u_R:=\eta_R u \in E
\]
which has compact support and 
$\nabla u_R=\eta_R\nabla u+u\nabla\eta_R.$
Thus, 
\[
\nabla(u_R-u)=(\eta_R-1)\nabla u+u\nabla\eta_R.
\]
Let us estimate the two terms above.
We have
\begin{equation}\label{eq:primeira}
    \|(\eta_R-1)\nabla u\|_{p}
=\|\nabla u\|_{L^p(\mathbb{R}^N\setminus B_R(0))}\to0\quad\mbox{as } \  R\to \infty.
\end{equation}

For the second term, we observe that $\nabla\eta_R$ is supported on the ring $A_R:=B_{2R}(0)\setminus B_R(0)$ and by the  Sobolev inequality on bounded domains, we obtain
\begin{equation}\label{eq:segunda}
\|u\nabla\eta_R\|_{p} \leq \frac{2}{R}\|u\|_{L^p(A_R)} \leq \frac{C}{R}\|\nabla u\|_{p}\to 0\quad \text{as } \ R\to\infty.
\end{equation}
Thus, since \eqref{eq:primeira} and \eqref{eq:segunda} also follow in the $L^q-$norm, we get
\[
\|u-u_R\|= \|\nabla(u_R-u)\|_{p}+\|\nabla(u_R-u)\|_{q}\to0 \quad \text{as} \quad
R\to\infty.
\]
that is, $u_R\to $ in $E$. Now, given $\delta>0$, consider $\rho_\delta$ a standard mollifier function and define
$$
u_{R,\delta}:=\rho_\delta*u_R \in C_0^\infty(\mathbb{R}^N).
$$
It holds, for any fixed $R>0$,
$
\nabla u_{R,\delta}
=\rho_\delta*\nabla u_R
\longrightarrow\nabla u_R
$
in $L^p(\mathbb R^N)\cap  L^q(\mathbb R^N)$,
as $\delta \to 0$, namely
$$\|u_R - u_{R,\delta}\|\to 0 \quad \text{as} \quad \delta\to 0.$$
Then given $\varepsilon>0$ fix $R=R_\varepsilon>0$ such that 
$\| u - u_{R}\| \le \varepsilon$ and choose $\delta = \delta_\varepsilon>0$ such that $\| u_{R}- u_{R , \delta}\| \le \varepsilon$. Thus, we infer that
$\| u - u_{R , \delta}\| \le 2\varepsilon$
proving the result.
\end{proof}

It is useful also the following fact.
If $\|u\|_E<1$, then it is also
    $\displaystyle\int_{\mathbb{R}^N} |\nabla u|^q < 1$ which implies, being $q<p$, 
    \begin{equation}\label{eq:estimativaE}
      \frac{1}{p}\int_{\mathbb R^N} |\nabla u|^p+
      \frac{1}{q}\int_{\mathbb R^N} |\nabla u|^q\ge \frac{1}{p}\int_{\mathbb R^N} |\nabla u|^p +\frac{1}{q}
       \left(\int_{\mathbb{R}^N} |\nabla u|^q \right)^{\frac{p}{q}}
    \ge
     \frac{1}{p} \|u\|_E^p
    \end{equation}
This inequality will be used many times through  the paper.

One of the essential ingredients in our work is the following Lions type vanishing lemma.

 \begin{lemma}\label{Lions-Lemma}
Let $\{u_n\} $ be a bounded sequence 
in $L^{q^*}(\mathbb R^N)\cap L^{p^*}(\mathbb R^N) $
and suppose that for some $R>0$ and $t\in[q^*,p^*)$ it holds
 	$$
\sup_{y\in\mathbb{R}^N}\int_{B_R(y)}|u_n|^tdx\rightarrow 0, \quad \mbox{as}\quad n\rightarrow+\infty.
 	$$ 
 	Then, $u_n\rightarrow 0$ in $L^\tau(\mathbb{R}^N)$ for all $q^*<\tau<p^*$.
 \end{lemma}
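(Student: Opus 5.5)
The plan is the classical Lions argument: prove that $u_n\to0$ in $L^{\tau_0}(\mathbb R^N)$ for a single exponent $\tau_0$ chosen close to $p^*$, then interpolate with the uniform bounds in $L^{q^*}$ and $L^{p^*}$ to reach every $\tau\in(q^*,p^*)$.

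\textbf{A needed extra hypothesis.} Boundedness in $L^{q^*}\cap L^{p^*}$ alone cannot suffice. Take $u_n$ to be the characteristic function of $n$ far apart disjoint balls, each of measure $1/n$. This sequence is bounded in every $L^m$ and satisfies $\sup_y\int_{B_R(y)}|u_n|^t\le 1/n$, yet $\int|u_n|^\tau=1$ for all $n$. Some gradient control is therefore required. I will assume, as in every application in the paper (Palais--Smale sequences of the energy functional), that $\{u_n\}$ is bounded in $E$, so that $\sup_n\|\nabla u_n\|_p<\infty$. By \eqref{Principal-Embedding} it is then also bounded in $L^{t}$ for every $t\in[q^*,p^*]$.

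\textbf{Local estimate on one ball.} Fix $B=B_R(y)$. For $\tau\in(t,p^*)$, Hölder interpolation gives
$$\|u\|_{L^\tau(B)}\le \|u\|_{L^t(B)}^{1-\theta}\|u\|_{L^{p^*}(B)}^{\theta},\qquad \frac1\tau=\frac{1-\theta}{t}+\frac{\theta}{p^*}.$$
Next I use the Sobolev inequality on a ball of fixed radius, with the lower-order term measured in $L^t$:
$$\|u\|_{L^{p^*}(B)}\le C\big(\|\nabla u\|_{L^p(B)}+\|u\|_{L^t(B)}\big).$$
The constant $C$ is independent of $y$ by translation invariance. This inequality follows from the usual $W^{1,p}(B)\hookrightarrow L^{p^*}(B)$ together with a Poincaré-type argument, or from the Sobolev inequality applied to a localized function. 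Combining the two gives
$$\int_B|u|^\tau\le C\,\|u\|_{L^t(B)}^{(1-\theta)\tau}\Big(\|\nabla u\|_{L^p(B)}^{\theta\tau}+\|u\|_{L^t(B)}^{\theta\tau}\Big).$$

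\textbf{Choice of $\tau_0$ and summation.} As $\tau\uparrow p^*$ we have $\theta\tau\to p^*$. Since $p^*>p$ and $p^*>t$, I can fix $\tau_0\in(t,p^*)$ with $\theta\tau_0\ge\max\{p,t\}$. Let $M:=\sup_n(\|\nabla u_n\|_p+\|u_n\|_t)$. Then
$$\|\nabla u_n\|_{L^p(B)}^{\theta\tau_0}\le M^{\theta\tau_0-p}\int_B|\nabla u_n|^p,\qquad \|u_n\|_{L^t(B)}^{\theta\tau_0}\le M^{\theta\tau_0-t}\int_B|u_n|^t.$$
I cover $\mathbb R^N$ by balls $B_R(y_i)$ such that each point lies in at most $N_0=N_0(N)$ of them, and sum over $i$. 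This yields
$$\int_{\mathbb R^N}|u_n|^{\tau_0}\le C N_0 \Big(\sup_{y}\|u_n\|_{L^t(B_R(y))}\Big)^{(1-\theta)\tau_0}\big(M^{\theta\tau_0}+M^{\theta\tau_0}\big)\to0.$$
The right side tends to zero because the supremum vanishes by hypothesis.

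\textbf{Interpolation to all $\tau$, and the main obstacle.} For $\tau\in(q^*,\tau_0]$, interpolate between $L^{q^*}$ and $L^{\tau_0}$. For $\tau\in[\tau_0,p^*)$, interpolate between $L^{\tau_0}$ and $L^{p^*}$. In each case the exponent on the $L^{\tau_0}$-norm is positive and the other factor is bounded, so $u_n\to0$ in $L^\tau$ for all $q^*<\tau<p^*$. The endpoints are excluded precisely because there the vanishing factor would carry exponent zero. The only delicate step is the local Sobolev inequality with a translation-uniform constant, combined with an exponent choice that makes all the local powers summable. The choice $\theta\tau_0\ge\max\{p,t\}$ handles the summability.
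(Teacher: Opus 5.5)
Your proof is correct for the statement you actually prove, and it takes a genuinely different route from the paper's. That route is needed, because the lemma as written is false and the paper's proof fails.

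\textbf{The statement as written is false.} Your counterexample is valid: indicator functions of $n$ widely separated balls of measure $1/n$ are bounded in $L^{q^*}\cap L^{p^*}$ and satisfy the vanishing hypothesis, yet $\int|u_n|^\tau=1$ for every $n$. So boundedness in $L^{q^*}\cap L^{p^*}$ alone cannot give the conclusion.

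\textbf{Where the paper's proof breaks.} The paper uses only H\"older interpolation. On each cube $Q_k$ of a disjoint tiling it bounds $\int_{Q_k}|u_n|^\tau$ by $\|u_n\|_{L^t(B_R(y_k))}^{(1-\alpha)\tau}\|u_n\|_{L^{p^*}(Q_k)}^{\alpha\tau}$ and then sums over $k$. This tacitly uses $\sum_k\|u_n\|_{L^{p^*}(Q_k)}^{\alpha\tau}\le\|u_n\|_{p^*}^{\alpha\tau}$. That inequality requires $\alpha\tau\ge p^*$. However, the interpolation relation gives $\alpha\tau/p^*=1-(1-\alpha)\tau/t<1$. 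This is exactly the step your example defeats.

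\textbf{What your argument does differently.} You insert the local Sobolev--Poincar\'e inequality $\|u\|_{L^{p^*}(B)}\le C(\|\nabla u\|_{L^p(B)}+\|u\|_{L^t(B)})$, with $C$ uniform in the centre. This replaces the non-summable power of $\|u\|_{L^{p^*}(B)}$ by powers of $\|\nabla u\|_{L^p(B)}$ and $\|u\|_{L^t(B)}$. Choosing $\tau_0$ with $\theta\tau_0\ge\max\{p,t\}$ makes these summable over a bounded-overlap cover; this is the standard Lions argument.

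Using an $L^t$ lower-order term rather than an $L^p$ one matters here. Since $q^*$ may be smaller than $p$, one can have $t<p$, and then $\|u\|_{L^p(B)}$ is not controlled by $\|u\|_{L^t(B)}$. Your route through the mean value $u_B$ handles this correctly. The final interpolation between $L^{q^*}$, $L^{\tau_0}$ and $L^{p^*}$ is also fine.

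\textbf{Cost of the extra hypothesis.} You need $\{\nabla u_n\}$ bounded in $L^p$, but this costs nothing in the paper. The lemma is only applied to Palais--Smale sequences already shown to be bounded in $E$: Lemmas \ref{ltd1}, \ref{ltd-truncado1}, \ref{ltd}, \ref{ltd3} and \ref{ltd-truncado}. Your corrected statement is precisely what the existence proofs require, and the paper's statement should be amended to include it.
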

 \begin{proof} For $y\in \mathbb R^N$ and $\rho>0$
 consider the cube centered in $y$ and side $2\rho$,  $Q(y,r)\subset B_R(y)$.
 Then for any $\tau\in[t, p^*)$ 
 by interpolation we have
 	\begin{eqnarray*}
\|u_n\|^\tau_{L^\tau(Q(y,\rho))}&\leq&\|u_n\|_{L^t(B_R(y))}^{(1-\alpha)\tau}\|u_n\|_{L^{p^*}(Q(y,\rho))}^{\alpha \tau} \\
&\le & \Big(\sup_{y\in\mathbb{R}^N}\int_{B_R(y)}|u_n|^t \Big)^{\frac{(1-\alpha)\tau}{t}}\|u_n\|_{L^{p^*}(Q(y,\rho))}^{\alpha \tau}
 	\end{eqnarray*}
 	for a suitable $\alpha \in [0,1)$. 
Then by choosing $\{y_{k}\}$ such that
 $$\mathbb R^{N}= \bigcup_{k} \overline{Q}(y_{k}, \rho) \quad\text{and}\quad Q(y_{k}, \rho)\cap Q(y_{l}, \rho)=\emptyset,$$
 we have
 \begin{eqnarray*}
 \|u\|^{\tau}_{\tau} \leq  \Big( \sup_{y\in \mathbb R^{N}}  \int_{B_R(y)}|u_n|^{t} \Big)^{\frac{(1-\lambda)\tau}{t}}\|u_n\|^{\lambda \tau}
 _{p^*}\to 0.    
 \end{eqnarray*}
If now $\tau\in (q^*, t)$ by interpolation, for suitable $\alpha\in(0,1)$
$$\| u_n\|_\tau \le \| u_n\|^{1-\alpha}_{q^*}\| u_n\|^{\alpha}_t \to 0,
$$
which concludes the proof
 \end{proof}

As we said before, the problem is variational and a suitable energy functional can be defined in the space $E$ in such a  way that the weak solutions of \eqref{P} can be characterized as its critical points.
Using the continuous embedding \eqref{Principal-Embedding} and a straightforward computation, it follows that the functional $I_\lambda: E\rightarrow\mathbb{R}$ 
defined by,
 $$
 I_\lambda(u):=\frac{1}{p}\int_{\mathbb{R}^N}|\nabla u|^p+\frac{1}{q}\int_{\mathbb{R}^N}|\nabla u|^q-\frac{1}{r}\int_{\mathbb{R}^N}u^r_+ -\frac{\lambda}{s}\int_{\mathbb{R}^N}u^s_+, \quad u\in E,
 $$
at least for $q^{*} \leq r,s \leq p^{*}$, is well defined and is of class $C^1$. Furthermore,  critical points of $I_\lambda$  give exactly positive solutions of \eqref{P}.

\section{Existence: proof of the main results}
\label{sec:Th1.2}
In this section, we prove our existence results for equation \eqref{P} according to the values of the exponents $s, r$ and the parameter $\lambda$.

The results are achieved via a careful analysis of the Mountain Pass structure of the functional.

\subsection{Proof of Theorem \ref{T1} (\ref{pmenor})}

The next result deals with the Mountain Pass Geometry for the functional $I_\lambda$.

\begin{lemma}
\label{MP} 
Assume that $1<q<p<N$ and $q^*\leq r,s\leq p^*$ and $p < \min\{r,s\}$. There exists 
$\rho,\alpha>0$  and $e\in E$ such that,
for any $\lambda>0$, 
\begin{equation*}
        \|e\|_E > \rho \quad \text{and} \quad \inf_{\|u\|_E = \rho} I_{\lambda}(u) \geq \alpha> 0=I_{\lambda}(0) > I_{\lambda}(e).
    \end{equation*}
\end{lemma}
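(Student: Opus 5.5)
The plan is to prove the two halves of the mountain pass geometry separately, using only the embedding \eqref{Principal-Embedding} and the elementary estimate \eqref{eq:estimativaE}. One point needs stating first. The point $e$ can be chosen independently of $\lambda$, because $\lambda>0$ makes the $s$-term only help. The radius $\rho$ and the level $\alpha$, however, will in general depend on $\lambda$: for fixed $\rho$ the term $\lambda C_s^s\rho^s/s$ becomes large as $\lambda\to\infty$, so no $\lambda$-uniform choice is possible. I therefore read the statement as: for every $\lambda>0$ there exist $\rho=\rho_\lambda$ and $\alpha=\alpha_\lambda$, together with a single $e$.

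\emph{Step 1 (positivity on a small sphere).} Let $u\in E$ with $\|u\|_E=\rho<1$. By \eqref{eq:estimativaE} the gradient part of $I_\lambda(u)$ is at least $\frac1p\rho^p$. Since $u_+^r\le |u|^r$, the embedding gives $\int u_+^r\le C_r^r\rho^r$, and similarly $\int u_+^s\le C_s^s\rho^s$. Hence
$$
I_\lambda(u)\ \ge\ \rho^p\Big(\frac1p-\frac{C_r^r}{r}\rho^{r-p}-\frac{\lambda C_s^s}{s}\rho^{s-p}\Big).
$$
Because $r,s>p$, the bracket tends to $1/p$ as $\rho\to0$. I will choose $\rho\in(0,1)$ small enough that the bracket is at least $\frac1{2p}$, and then set $\alpha=\rho^p/(2p)>0$. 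This $\rho$ decreases as $\lambda$ grows, which is harmless.

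\emph{Step 2 (a point of negative energy).} Fix $\varphi\in C_0^\infty(\mathbb R^N)$ with $\varphi\ge0$ and $\varphi\not\equiv0$; it lies in $E$ by Proposition \ref{Densidade}. For $t>0$, dropping the nonpositive term $-\frac{\lambda}{s}t^s\int\varphi^s$ gives
$$
I_\lambda(t\varphi)\le \frac{t^p}{p}\|\nabla\varphi\|_p^p+\frac{t^q}{q}\|\nabla\varphi\|_q^q-\frac{t^r}{r}\int\varphi^r=:g(t).
$$
Since $q<p<r$ and $\int\varphi^r>0$, we have $g(t)\to-\infty$. So there is $t_0$ with $g(t_0)<0$ and $\|t_0\varphi\|_E>1$. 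Setting $e=t_0\varphi$ gives $I_\lambda(e)\le g(t_0)<0$ for every $\lambda>0$, and $\|e\|_E>1>\rho$ for whatever $\rho<1$ Step 1 produces. Finally $I_\lambda(0)=0$ trivially.

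\emph{Main obstacle.} The computations themselves are routine. The only delicate point is Step 1, where \eqref{eq:estimativaE} is needed to bound the nonhomogeneous $(p,q)$ gradient term from below by $\frac1p\|u\|_E^p$; this requires working inside the unit ball, i.e.\ $\rho<1$. The quantifier order over $\lambda$ also has to be handled honestly, with $\rho$ depending on $\lambda$ and $e$ uniform in $\lambda$.
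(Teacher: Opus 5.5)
Your proof is correct and follows the paper's argument. On a small sphere you get a lower bound of the form $\frac1p\rho^p-\frac{C_r^r}{r}\rho^r-\frac{\lambda C_s^s}{s}\rho^s$ from \eqref{eq:estimativaE} and the embedding \eqref{Principal-Embedding}, and along a ray you get $I_\lambda(t\varphi)\to-\infty$.

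Your reading of the quantifiers is also the right one. For fixed $\rho$ and any $u\ge 0$, $u\neq 0$, on the sphere, $I_\lambda(u)\to-\infty$ as $\lambda\to+\infty$, so only $e$ can be chosen uniformly in $\lambda$; that is all the class $\Gamma$ later needs. Discarding the $s$-term makes this uniformity of $e$ explicit, where the paper's proof leaves it implicit.
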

\begin{proof}
	For any $u \in E$ with $\|u\|_E < 1$, by \eqref{eq:estimativaE} and the continuous embedding \eqref{Principal-Embedding}, we have    
    $$
        I_{\lambda}(u) 
        \geq
         \frac{1}{p} \|u\|_{E}^{p} - \frac{C_r}{r}\|u^+\|_E^{r}-\lambda \frac{C_s}{s}\|u^+\|_{E}^{s} \geq \frac{1}{p} \|u\|_{E}^{p} - \frac{C_r}{r}\|u\|_E^{r}-\lambda \frac{C_s}{s}\|u\|_{E}^{s} 
    $$
and being $p<\min\{r,s\}$, the existence of $\rho,\alpha>0$ is guaranteed.
Moreover, fixed $\varphi \in E\setminus\{0\}$ such that $\varphi_{+} \neq 0$ we get, as $t\to+\infty$, 
    \begin{align*}
        I_{\lambda}(t\varphi) &= \frac{t^p}{p}\int_{\mathbb{R}^N}|\nabla \varphi |^p+\frac{t^q}{q}\int_{\mathbb{R}^N}|\nabla \varphi |^q-\frac{t^r}{r}\int_{\mathbb{R}^N}\varphi_{+} ^r-\frac{\lambda t^s}{s}\int_{\mathbb{R}^N}\varphi_{+} ^s \to -\infty 
    \end{align*}
and since $\max\{r,s\}>p>q$, 
the proof is concluded.
\end{proof}

Actually the Mountain Pass Geometry does not depend on $\lambda>0$ and in view of Lemma~\ref{MP}, is well defined the minimax level
$$
0<c_\lambda:=\inf_{\gamma \in \Gamma}\max_{t\in[0,1]} I_\lambda(\gamma(t))
$$
where $\Gamma := \{\gamma \in C([0,1],E) : \gamma(0)=0 \text{ and } \gamma(1) = e\}$ 
and there exists a sequence $\{u_n\}\subset E$ such that 
\begin{equation}\label{PS}
I_\lambda(u_n)=c_\lambda+o_n(1)\quad\mbox{and}\quad I_{\lambda}'(u_n)=o_n(1).  
\end{equation}

\begin{lemma} \label{ltd1}

Assume that $q^* \leq r,s\leq p^*$ and $p<\min\{r,s\}$. Then, the sequence $\{u_n\}$ is bounded in $E$.

\end{lemma}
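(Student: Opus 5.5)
The boundedness follows from the classical Ambrosetti--Rabinowitz comparison: subtract from $I_\lambda(u_n)$ a multiple $1/\theta$ of $I_\lambda'(u_n)u_n$, with $\theta := \min\{r,s\}>p$. Testing the derivative with $u_n$ gives
\begin{equation*}
I_\lambda'(u_n)u_n=\int_{\mathbb R^N}|\nabla u_n|^p+\int_{\mathbb R^N}|\nabla u_n|^q-\int_{\mathbb R^N}(u_n)_+^r-\lambda\int_{\mathbb R^N}(u_n)_+^s ,
\end{equation*}
where we used $u_n (u_n)_+^{r-1} = (u_n)_+^r$, and similarly for $s$. Hence
\begin{equation*}
I_\lambda(u_n)-\frac1\theta I_\lambda'(u_n)u_n=\Big(\frac1p-\frac1\theta\Big)\int_{\mathbb R^N}|\nabla u_n|^p+\Big(\frac1q-\frac1\theta\Big)\int_{\mathbb R^N}|\nabla u_n|^q+\Big(\frac1\theta-\frac1r\Big)\int_{\mathbb R^N}(u_n)_+^r+\lambda\Big(\frac1\theta-\frac1s\Big)\int_{\mathbb R^N}(u_n)_+^s .
\end{equation*}
Since $\theta\le r,s$ and $\lambda>0$, the last two terms are nonnegative and can be dropped. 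Since $q<p<\theta$, we get the lower bound $c_0\big(\|\nabla u_n\|_p^p+\|\nabla u_n\|_q^q\big)$ with $c_0:=\frac1p-\frac1\theta>0$.

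For the left-hand side, \eqref{PS} gives $I_\lambda(u_n)\le c_\lambda+1$ for $n$ large, and $|I_\lambda'(u_n)u_n|\le o_n(1)\|u_n\|_E$. Combining with the first inequality in \eqref{eq:usefull}, namely $\kappa\|u_n\|_E\le \|\nabla u_n\|_p^p+\|\nabla u_n\|_q^q+2$, we obtain
\begin{equation*}
c_0\kappa\|u_n\|_E\le c_0\big(\|\nabla u_n\|_p^p+\|\nabla u_n\|_q^q+2\big)\le c_\lambda+1+2c_0+o_n(1)\|u_n\|_E .
\end{equation*}
For $n$ large, $o_n(1)\le c_0\kappa/2$, which yields $\|u_n\|_E\le C$ uniformly in $n$.

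The only delicate point is the inhomogeneity of the norm. The quantity $\|\nabla u\|_p^p+\|\nabla u\|_q^q$ is not a power of $\|u\|_E$, so a linear bound on the right-hand side does not immediately control $\|u_n\|_E$. This is exactly what \eqref{eq:usefull} resolves, by providing a linear lower bound up to an additive constant. Alternatively, one can argue by contradiction: if $\|u_n\|_E\to\infty$ along a subsequence, then either $\|\nabla u_n\|_p$ or $\|\nabla u_n\|_q$ diverges, and that term's power dominates the linear error term $o_n(1)\|u_n\|_E$, which is impossible.
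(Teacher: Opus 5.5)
Your proposal is correct and essentially identical to the paper's proof: the paper also subtracts $\frac{1}{d}I_\lambda'(u_n)[u_n]$ with $d=\min\{r,s\}$, drops the nonnegative nonlinear terms, and bounds the gradient terms below by $\frac{d-p}{dp}\big(\|\nabla u_n\|_p^p+\|\nabla u_n\|_q^q\big)$. It then concludes via \eqref{eq:usefull} exactly as you do. The sign condition $\lambda>0$ you use to drop the $s$-term is implicit in the lemma's context rather than its statement, and the paper relies on it in the same way.
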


\begin{proof}
Set $d:= \min\{r,s\}$ and 
using \eqref{eq:usefull} we infer, 
\begin{equation*}
\begin{aligned}
c_{\lambda} + o_n(1) + o_n(1)\|u_n\|_E &\geq I_\lambda(u_n)-\frac{1}{d}I'_\lambda(u_n)[u_n] \\
&=\frac{d-p}{dp}\int_{\mathbb{R}^N}|\nabla u_n|^p+\frac{d-q}{dq}\int_{\mathbb{R}^N}|\nabla u_n|^q+ \frac{r-d}{rd}\int_{\mathbb{R}^N}u_{n+}^r \\
& \ \ + \lambda\frac{s-d}{ds}\int_{\mathbb{R}^N}u_{n+}^s\\
&\geq \frac{d-p}{dp}\int_{\mathbb{R}^N}|\nabla u_n|^p+\frac{d-q}{dq}\int_{\mathbb{R}^N}|\nabla u_n|^q\\
&\geq\frac{d-p}{dp} \left(\|\nabla u_n\|_{p}^p + \|\nabla u_n\|_{q}^q \right)\\
&\ge \frac{d-p}{dp} (\kappa \|u_n\|_E - 2).
\end{aligned}
\end{equation*}
Therefore 
$$
\left(\kappa \frac{d-p}{dp}-o_n(1)\right) \|u_n\|_E\leq c_\lambda+o_n(1)+2\frac{d-p}{dp},
$$
from which we concluded that $\{u_n\}$ is bounded in $E$.
\end{proof}

\begin{lemma} \label{minoracao}
Assume $1<q<p<N$ and $q^*\leq r,s\leq p^*$. If $\lambda>0$
and $\{u_n\}$  is a sequence satisfying \eqref{PS}, then there exists a constant $C_{\lambda}>0$ such that for $n$ large 
$$
\int_{\mathbb{R}^N}|u_n|^rdx+\int_{\mathbb{R}^N}|u_n|^sdx\geq C_{\lambda}>0.
$$
\end{lemma}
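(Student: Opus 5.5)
We argue by contradiction. Suppose that, along a subsequence, $\int_{\mathbb R^N}|u_n|^r+\int_{\mathbb R^N}|u_n|^s\to 0$. Then also $\int_{\mathbb R^N}u_{n+}^r\to 0$ and $\int_{\mathbb R^N}u_{n+}^s\to0$. The goal is to show that this forces $\|u_n\|_E\to 0$, and hence $I_\lambda(u_n)\to 0$. That contradicts $I_\lambda(u_n)\to c_\lambda>0$.

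First, the identity $I'_\lambda(u_n)[u_n]=o_n(1)\|u_n\|_E$ reads
$$
\int_{\mathbb R^N}|\nabla u_n|^p+\int_{\mathbb R^N}|\nabla u_n|^q=\int_{\mathbb R^N}u_{n+}^r+\lambda\int_{\mathbb R^N}u_{n+}^s+o_n(1)\|u_n\|_E .
$$
To handle the term $o_n(1)\|u_n\|_E$, I would use that $\{u_n\}$ is bounded in $E$. In the setting where this lemma is applied, boundedness comes from Lemma \ref{ltd1}; in the other regimes it comes from the analogous boundedness lemmas proved later, so the statement should be read for bounded PS sequences. Alternatively, one can avoid boundedness: by \eqref{eq:usefull}, $\|u_n\|_E\le \kappa^{-1}(\|\nabla u_n\|_p^p+\|\nabla u_n\|_q^q+2)$, and the error term can be absorbed into the left-hand side once $n$ is large. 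Either way, the right-hand side tends to zero, so $\|\nabla u_n\|_p^p+\|\nabla u_n\|_q^q\to0$. This in particular gives $\|u_n\|_E\to 0$.

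Next, plug this into the functional. We get
$$
I_\lambda(u_n)=\frac1p\|\nabla u_n\|_p^p+\frac1q\|\nabla u_n\|_q^q-\frac1r\int_{\mathbb R^N} u_{n+}^r-\frac\lambda s\int_{\mathbb R^N} u_{n+}^s\to0,
$$
which contradicts $c_\lambda>0$. So there is a constant $C_\lambda>0$ with $\int_{\mathbb R^N}|u_n|^r+\int_{\mathbb R^N}|u_n|^s\ge C_\lambda$ for all large $n$. Note that the constant depends on $\lambda$ through $c_\lambda$ (and through the factor $\lambda$ in the PS identity), which explains the notation $C_\lambda$.

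The only delicate step is the absorption of the $o_n(1)\|u_n\|_E$ term. Since $\lambda>0$, the right-hand side is nonnegative, so no sign issue arises there. I would make the absorption explicit through \eqref{eq:usefull}: set $a_n:=\|\nabla u_n\|_p^p+\|\nabla u_n\|_q^q$. Then
$$
a_n\le o_n(1)+\varepsilon_n\kappa^{-1}(a_n+2),\qquad \varepsilon_n\to0,
$$
which gives $a_n\to0$.
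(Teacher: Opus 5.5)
Your proof is correct, but it takes a different route from the paper's. The paper gives a direct estimate using the single combination $I_\lambda(u_n)-\frac1q I'_\lambda(u_n)[u_n]$. In it the $q$-gradient term cancels and the $p$-gradient term appears with the nonpositive coefficient $\frac{q-p}{pq}$. Dropping that term gives
\[
c_\lambda+o_n(1)\le \max\Big\{\frac{r-q}{qr},\,\lambda\frac{s-q}{qs}\Big\}\Big(\int_{\mathbb R^N}|u_n|^r+\int_{\mathbb R^N}|u_n|^s\Big),
\]
which yields an explicit $C_\lambda$ proportional to $c_\lambda$. The cost is that it needs $I'_\lambda(u_n)[u_n]=o_n(1)$, i.e. the boundedness from Lemma \ref{ltd1}. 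That lemma relies on $p<\min\{r,s\}$, which is implicit in the context but not listed in the statement.

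You argue by contradiction instead. You use the Nehari-type identity $I'_\lambda(u_n)[u_n]$ to force the gradient terms to zero, and then $I_\lambda(u_n)\to0$ contradicts $c_\lambda>0$. Your constant is not explicit. On the other hand, your absorption step via \eqref{eq:usefull}, with $a_n(1-\varepsilon_n\kappa^{-1})\le |b_n|+2\varepsilon_n\kappa^{-1}$, removes any need for an a priori bound on $\{u_n\}$. So your argument works for every Palais--Smale sequence at a positive level.

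One small point: your remark that the right-hand side is nonnegative because $\lambda>0$ is not needed. You only use that it tends to zero.
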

\begin{proof}It follows from Lemma \ref{ltd1} together with \eqref{PS} that
$$
\begin{aligned}
c_\lambda+o_n(1)&=I_\lambda(u_n)-\frac{1}{q}I'_\lambda(u_n)[u_n] \\
&=\frac{q-p}{pq}\int_{\mathbb{R}^N}|\nabla u_n|^p-\frac{q-r}{qr}\int_{\mathbb{R}^N}u_{n+}^r-\lambda\frac{q-s}{qs}\int_{\mathbb{R}^N}u_{n+}^s\\
&\leq\frac{r-q}{qr}\int_{\mathbb{R}^N}u_{n+}^r+\lambda \frac{s-q}{qs}\int_{\mathbb{R}^N}u_{n+}^s \\
&\leq \max\left\{\frac{r-q}{qr}, \lambda \frac{s-q}{qs} \right\}\left(\int_{\mathbb{R}^N}|u_n|^r + \int_{\mathbb{R}^N}|u_n|^s \right),
\end{aligned}
$$
which implies the desired result.
\end{proof}

\begin{proof}[Proof of Theorem~\ref{T1} \eqref{pmenor}]
By Lemma \ref{minoracao}, we know that  it does not happen $u_n \to 0$ in $L^r(\mathbb{R}^N)$ and $u_n \to 0$ in $L^s(\mathbb{R}^N)$. 

As a consequence, by the Lions Lemma \ref{Lions-Lemma} there exists a constant $C_t>0$ such that 
$$
\sup_{y\in \mathbb{R}^N}\int_{B_1(y)}|u_n|^t > C_t>0.
$$
Thus, we can find a sequence of points $\{y_n\}\subset \mathbb{R}^N$ such that 

$$
\int_{B_1(y_n)}|u_n|^t\geq C_t>0.
$$

Setting $v_n(x) = u_n(x+y_n)$, it follows that
\begin{equation}\label{Transladada}
\int_{B_1(0)}|v_n|^tdx\geq C_t>0.
\end{equation}
Using the translation invariance, it is easy to show that 
\begin{equation}\label{PS-Transladada}
  I_\lambda(v_n) \to c_{\lambda}
\quad \text{and} \quad 
I'_\lambda(v_n) \to 0.
\end{equation}
Applying Lemma \ref{ltd1}, since $p<\min\{r,s\}$,   we deduce that $\{v_n\}$ is bounded in $E$. 
Since $E$ is reflexive, there exists a subsequence (still denoted by $\{v_n\}$) such that 
\[
v_n \rightharpoonup v \quad \text{in } E.
\]
Consequently, 
\[
v_n \to v \quad \text{in } L^t_{\mathrm{loc}}(\mathbb{R}^{N}),
\]
for all $t \in [1,p^*)$, and therefore, it follows from \eqref{Transladada} that $v$ is nontrivial. In particular, from 
$$|v_{n+}(x)-v_{+}(x)| \leq |v_n(x)-v(x)| $$
for all $x \in \mathbb{R}^N$, we have 
$$v_{n+} \to v_{+} \quad \text{in} \quad L_{\mathrm{loc}}^{t}(\mathbb{R}^N).$$

We will prove that $v$ is a weak solution of \eqref{P} by testing the equation on test functions, as in \cite{F11}. In fact, for any $\varphi\in C_0^\infty (\mathbb{R}^N)$, it follows from \eqref{PS-Transladada} that  
$$
\int_{\mathbb{R}^N}\left(|\nabla v_n|^{p-2} \nabla v_n+|\nabla v_n|^{q-2}\nabla v_n\right)\nabla\varphi-\int_{\mathbb{R}^N}\left(v_{n+}^{r-1}+\lambda v_{n+}^{s-1}\right)\varphi=o_n(1).
$$
Since the embedding $E\hookrightarrow L^t_{\textrm{loc}}(\mathbb{R}^N)$ is compact, for $1\leq t<p^*$, and $v_n \rightharpoonup v$ in $E$, we readily have 
$$
\left|\int_{\mathbb{R}^N} \left(v_{n+}^{r-1} -v_{+}^{r-1}\right)\varphi \right| = o_n(1) \quad \text{and} \quad \left|\int_{\mathbb{R}^N} \left(v_{n+}^{s-1} -v_{+}^{s-1}\right)\varphi \right| = o_n(1).
$$
In fact, setting $K := \operatorname{supp} \varphi$ we infer
\begin{align*}
\left|\int_{\mathbb{R}^N} \left(v_{n+}^{r-1} -v_{+}^{r-1}\right)\varphi \right| \leq \int_{K} \left|v_{n+}^{r-1} -v_{+}^{r-1}\right|\left|\varphi\right| 
\leq \|\varphi\|_{\infty}  \int_{K} \left|v_{n+}^{r-1} -v_{+}^{r-1}\right|.
\end{align*}

Since  $v_{n_k+} \rightharpoonup v_{+}$ in $E$ and the embedding $E\hookrightarrow L^r_{\textrm{loc}}(\mathbb{R}^N)$ is compact, we see that $v_{n_k+} \to v_{+}$ in $L^r(K)$. Thus, there is $h \in L^{r}(K)$ such that 
$$
v_{n_k+} \to v_{+} \quad\text{a.e in $K$}
$$
and 
$$
|v_{n_k+}| \leq h \quad\text{a.e in $K$}
$$
up to subsequence. Consequently, again up to subsequence, it follows that 
$$\left|v_{n_k+}^{r-1} -
v_{+}^{r-1}\right| \to 0\quad\text{a.e in $K$}
$$
and 
$$\left|v_{n_k+}^{r-1} - v_{+}^{r-1}\right| \leq h^{r-1} + |v|^{r-1} \in L^{1}(K).
$$
By the Dominated Convergence of Lebesgue Theorem, we have  
$$
\int_{K} \left|v_{n+}^{r-1} -v_{+}^{r-1}\right| \to 0.
$$

Similarly we have 
$$
\int_{K} \left|v_{n+}^{s-1} -v_{+}^{s-1}\right| \to 0.
$$
Consequently,
$$
\int_{\mathbb{R}^N}\left(v_{n+}^{r-1}+\lambda v_{n+}^{s-1}\right)\varphi \to \int_{\mathbb{R}^N}\left(v_{+}^{r-1}+\lambda v_{+}^{s-1}\right)\varphi.
$$

Let us  prove that
\begin{equation}\label{Passandoolimite}
\int_{\mathbb{R}^N}\left(|\nabla v_n|^{p-2} \nabla v_n+|\nabla v_n|^{q-2}\nabla v_n\right)\nabla\varphi\rightarrow\int_{\mathbb{R}^N}\left(|\nabla v|^{p-2} \nabla v+|\nabla v|^{q-2}\nabla v\right)\nabla\varphi.
\end{equation}
In fact, it is enough to show that
$$
\int_{\mathbb{R}^N}|\nabla v_n|^{p-2}\nabla v_n \nabla\varphi \to \int_{\mathbb{R}^N}|\nabla v|^{p-2}\nabla v \nabla \varphi
$$
since the convergence 
$$
\int_{\mathbb{R}^N}|\nabla v_n|^{q-2}\nabla v_n \nabla\varphi \to \int_{\mathbb{R}^N}|\nabla v|^{q-2}\nabla v \nabla\varphi
$$
follows the same steps. 

We have
$$
\left|\int_{\mathbb{R}^N}|\nabla v_n|^{p-2}\nabla v_n \nabla\varphi - \int_{\mathbb{R}^N}|\nabla v|^{p-2}\nabla v \nabla \varphi \right| \leq \|\nabla \varphi\|_{\infty} \int_{K} \left||\nabla v_n|^{p-2}\nabla v_n -|\nabla v|^{p-2}\nabla v \right|
$$
so, our aim is to prove that the integral over $K := \operatorname{supp} \varphi$ tends to zero.

\smallskip

\textbf{Step 1:}
$\nabla v_n \to \nabla v$ in $L^p(B_1(0);\mathbb{R}^N)$.

Indeed, take $\psi \in C_0^{\infty} (\mathbb{R}^N)$ such that $\psi \equiv 1$ in $B_1(0)$ and $\psi \equiv 0$ in $\mathbb{R}^N \setminus B_2(0)$. 

Note that for $x,y \in \mathbb{R}^N$ if $\langle \cdot , \cdot \rangle$ denotes the standard scalar product in $\mathbb{R}^N$, it is true that
\[
\left\langle |x|^{p-2}x - |y|^{p-2}y,\; x-y \right\rangle
\ge
\begin{cases}
c_p |x-y|^p, & \text{if } p \ge 2, \\[6pt]
c_p \dfrac{|x-y|^2}{(|x|+|y|)^{2-p}}, & \text{if } 1<p<2,
\end{cases}
\]
where $c_p > 0$. Furthermore,
\begin{align*}
    0 &\leq \int_{B_1(0)} \left(|\nabla v_n|^{p-2}\nabla v_n - |\nabla v|^{p-2}\nabla v + |\nabla v_n|^{q-2}\nabla v_n - |\nabla v|^{q-2}\nabla v \right)(\nabla v_n - \nabla v) \\
    &\leq \int_{\mathbb{R}^N} \left[ \left(|\nabla v_n|^{p-2} \nabla v_n + |\nabla v_n|^{q-2} \nabla v_n \right) - \left(|\nabla v|^{p-2} \nabla v + |\nabla v|^{q-2} \nabla v \right) \right](\nabla v_n - \nabla v)\psi
\end{align*}
and 
\begin{align*}
    o_n(1) &= I'_{\lambda}(v_n)[v_n \psi] - I'_{\lambda}(v_n)[v \psi] \\
    &= \int_{\mathbb{R}^N} \left(|\nabla v_n|^{p-2} \nabla v_n + |\nabla v_n|^{q-2} \nabla v_n \right) (\nabla (v_n \psi) - \nabla (v \psi)) \\
    &+ \int_{\mathbb{R}^N} \left(v_{n+}^{r-1} + \lambda v_{n+}^{s-1} \right)(v \psi - v_n \psi) \\
    &= \int_{\mathbb{R}^N} \left(|\nabla v_n|^{p-2} \nabla v_n + |\nabla v_n|^{q-2} \nabla v_n \right) (\psi \nabla v_n + v_n \nabla \psi - \psi \nabla v - v \nabla \psi) + o_n(1) \\
    &= \int_{\mathbb{R}^N} \left(|\nabla v_n|^{p-2} \nabla v_n + |\nabla v_n|^{q-2} \nabla v_n \right)(\nabla v_n - \nabla v) \psi + o_n(1).
\end{align*}
Observe also that 
$$
\int_{B_1(0)} \left(|\nabla v_n|^{q-2}\nabla v_n - |\nabla v|^{q-2}\nabla v\right)(\nabla v_n - \nabla v) \geq 0.
$$
Moreover, since
$$
\left(|\nabla v|^{p-2} \nabla v + |\nabla v|^{q-2} \nabla v \right)\psi \in L^{p/(p-1)}(\mathbb{R}^N; \mathbb{R}^N)
$$
and 
$$
\nabla v_n \rightharpoonup \nabla v \quad \text{in} \quad L^p(\mathbb{R}^N; \mathbb{R}^N),$$
(recall that  $v_n \rightharpoonup v$ in $E$), we infer

\begin{equation*} \label{convergencia}
   \int_{\mathbb{R}^N}  \left(|\nabla v|^{p-2} \nabla v + |\nabla v|^{q-2} \nabla v \right) (\nabla v_n - \nabla v) \psi = o_n(1). 
\end{equation*}

Thus, if $p\geq 2$, we have
\begin{align*}
o_n(1) &= I'_{\lambda}(v_n)[v_n \psi] - I'_{\lambda}(v_n)[v \psi]\\
&= \int_{\mathbb{R}^N} \left(|\nabla v_n|^{p-2} \nabla v_n + |\nabla v_n|^{q-2} \nabla v_n \right)(\nabla v_n - \nabla v) \psi + o_n(1) \\
&= \int_{\mathbb{R}^N} \left(|\nabla v_n|^{p-2} \nabla v_n + |\nabla v_n|^{q-2} \nabla v_n - \left(|\nabla v|^{p-2} \nabla v + |\nabla v|^{q-2} \nabla v \right) \right)(\nabla v_n - \nabla v) \psi + o_n(1) \\
&+ \int_{\mathbb{R}^N}  \left(|\nabla v|^{p-2} \nabla v + |\nabla v|^{q-2} \nabla v \right) (\nabla v_n - \nabla v) \psi \\
&\geq \int_{B_1(0)} \left(|\nabla v_n|^{p-2}\nabla v_n - |\nabla v|^{p-2}\nabla v + |\nabla v_n|^{q-2}\nabla v_n - |\nabla v|^{q-2}\nabla v \right)(\nabla v_n - \nabla v) + o_n(1) \\
&\geq \int_{B_1(0)} \left(|\nabla v_n|^{p-2}\nabla v_n - |\nabla v|^{p-2}\nabla v \right)(\nabla v_n - \nabla v) \psi + o_n(1) \\
&\geq c_p \int _{B_1(0)} \left|\nabla v_n - \nabla v \right|^p + o_n(1)
\end{align*}
and the result is immediate.

If $1<p<2$, then 
$$
o_n(1) \geq c_p \int_{B_1(0)} \frac{|\nabla v_n - \nabla v|^2}{\left(|\nabla v_n| + |\nabla v| \right)^{2-p}} + o_n(1).
$$

Consequently,
\begin{align*}
 \int_{B_1(0)} |\nabla v_n - \nabla v|^p &= \int_{B_1(0)} \frac{|\nabla v_n - \nabla v|^p}{\left(|\nabla v_n| +|\nabla v|\right)^{(2-p)p/2}} \cdot \left(|\nabla v_n| +|\nabla v|\right)^{(2-p)p/2} \\
 &\leq \left(\int_{B_1(0)} \frac{|\nabla v_n - \nabla v|^2}{\left(|\nabla v_n| +|\nabla v|\right)^{2-p}} \right)^{p/2} \left(\int_{B_1(0)} \left(|\nabla v_n| +|\nabla v|\right)^{p}\right)^{(2-p)/2} \\
 &\leq C o_n(1).
\end{align*}
Again, the result follows.

\smallskip

\textbf{Step 2}: $\displaystyle\int_{K} \left||\nabla v_n|^{p-2}\nabla v_n -|\nabla v|^{p-2}\nabla v \right| \to 0$.

\smallskip
Take a sufficiently large $R>0$ such that $K \subset B_R(0)$. Based on the construction done in the previous step, we have that $\nabla v_n \to \nabla v$ in $L^p(B_R(0); \mathbb{R}^N)$ and, therefore, 
$$
\int_{K} \left|\nabla v_n - \nabla v \right|^p \leq \int_{B_R(0)} \left|\nabla v_n - \nabla v \right|^p \to 0,
$$ 
that is $\nabla v_n \to \nabla v$ in $L^p(K;\mathbb{R}^N)$.

Therefore we easily conclude that
$$
\int_{\mathbb{R}^N}|\nabla v_n|^{p-2}\nabla v_n \nabla\varphi \to \int_{\mathbb{R}^N}|\nabla v|^{p-2}\nabla v \nabla \varphi.
$$

So, \eqref{Passandoolimite} is valid and, consequently,
$$
\int_{\mathbb{R}^N}\left(|\nabla v|^{p-2} v+|\nabla v|^{q-2}\nabla v\right)\nabla\varphi-\int_{\mathbb{R}^N}\left(v_{+}^{r-1} +\lambda v_{+}^{s-1} \right)\varphi=0, \quad \forall \varphi\in C_0^\infty(\mathbb{R}^N).
$$
Due to the density result of Proposition \ref{Densidade}, we concluded that 
\begin{equation} \label{identidade}
\int_{\mathbb{R}^N}\left(|\nabla v|^{p-2} v+|\nabla v|^{q-2}\nabla v\right)\nabla\varphi-\int_{\mathbb{R}^N}\left(v_{+}^{r-1} +\lambda v_{+}^{s-1} \right)\varphi=0, \quad \forall \varphi\in E,
\end{equation}
that is, $v$ is a nontrivial weak solution of \eqref{P}. Taking $\varphi = v_{-}$ in \eqref{identidade} we obtain $v_{-} = 0$ in $E$ and so, $v \geq 0$.
\end{proof}

\subsection{Proof of  Theorem \ref{T1} (\ref{pnomeio})}

Take $\Psi \in C_{0}^{\infty}(\mathbb{R})$ such that $0 \leq \Psi \leq 1$, $\Psi \equiv 1$ in $[0,1]$ and $\Psi \equiv 0$ in $[2,+\infty)$. Define, for any $L>0$, 
\begin{equation*}
    H_{L}(u) = \frac{1}{s} \Psi_{L}\left(u\right) \int_{\mathbb{R}^N} u_{+}^{s}, \quad u \in E,
\end{equation*}
where 
\begin{equation*}
    \Psi_{L}(u) := \Psi \left(\frac{\|u\|_E^p}{L^p} \right), \quad u \in E.
\end{equation*}
Clearly, $H_L$ is well defined,
 supported in $\{u\in E: \| u\| \le 2L\}$
and belongs to $C^1(E;\mathbb{R})$ with 
\begin{align*}
    H'_{L}(u)[\varphi] &= \Psi_L(u) \int_{\mathbb{R}^N}u_{+}^{s-1}\varphi +\\
    &+\frac{p}{sL^p} \Psi'\left(\frac{\|u\|_E^{p}}{L^p}\right)\left(\int_{\mathbb{R}^N} |\nabla u|^{p-2} \nabla u \nabla \varphi + \left(\int_{\mathbb{R}^N}|\nabla u|^{q}\right)^{\frac{p}{q}-1}\int_{\mathbb{R}^N} |\nabla u|^{q-2} \nabla u \nabla \varphi\right) \int_{\mathbb{R}^N} u^s_+.
\end{align*}
Define the truncated functional $J_{\lambda, L} : E \to \mathbb{R}$ by 
\begin{equation*}
    J_{\lambda, L}(u) = \frac{1}{p} \int_{\mathbb{R}^N} |\nabla u|^p + \frac{1}{q} \int_{\mathbb{R}^N} |\nabla u|^q - \frac{1}{r}\int_{\mathbb{R}^N} u_{+}^r - \lambda H_L(u),
\end{equation*}
which is in $ C^{1}(E;\mathbb{R})$, with 
\begin{equation*}
    J'_{\lambda, L}(u)[\varphi] = \int_{\mathbb{R}^N} \left(|\nabla u|^{p-2} \nabla u + |\nabla u|^{q-2} \nabla u\right) \nabla \varphi - \int_{\mathbb{R}^N} u_{+}^{r-1} \varphi -\lambda H'_{L}(u)[\varphi],
\end{equation*}
and a critical point $u \in E$ of $J_{\lambda, L}$ is a critical point of $I_{\lambda}$ when $\|u\|_E \leq L$.

For the next result we will use as the comparison functional
    \begin{equation*}
        I_{0}(u) = \frac{1}{p}\int_{\mathbb{R}^N}|\nabla u|^p+\frac{1}{q}\int_{\mathbb{R}^N}|\nabla u|^q-\frac{1}{r}\int_{\mathbb{R}^N}u_{+}^r, \quad u\in E.
    \end{equation*}
    Of course $I_0(u)\ge J_{\lambda, L}(u)$.

\begin{lemma} \label{GPM-positivo}
    If $s<p<r$, there exist  $\lambda_1 > 0$, $\alpha, \rho_0 > 0$ and $e \in E$ 
    such that, for every  $L>0$ and  $\lambda \in (0,\lambda_1)$ it holds
    \begin{equation*}
        \|e\|_E > \rho_0 \quad \text{and} \quad \inf_{\|u\|_E = \rho_0} J_{\lambda,L}(u) \geq \alpha> 0=J_{\lambda,L}(0) > J_{\lambda,L}(e),
    \end{equation*}
    and
    $$
      \inf_{\|u\|_E = \rho_0} I_{\lambda}(u) \geq \alpha> 0=I_{\lambda}(0) > I_{\lambda}(e).
     $$

\end{lemma}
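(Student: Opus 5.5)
We prove the lower bound for $J_{\lambda,L}$ and $I_\lambda$ at the same time on a sphere $\|u\|_E=\rho_0<1$. The point is that $H_L(u)\le \frac1s\int u_+^s$ uniformly in $L$, since $0\le\Psi\le1$. Hence
$$
J_{\lambda,L}(u)\ \ge\ I_\lambda(u)\ =\ I_0(u)-\frac{\lambda}{s}\int_{\mathbb R^N}u_+^s \qquad\text{for every } L>0,
$$
and it suffices to bound $I_\lambda$ from below. For $\|u\|_E=\rho_0<1$ we use \eqref{eq:estimativaE} and the embedding \eqref{Principal-Embedding}:
$$
I_\lambda(u)\ \ge\ \frac1p\rho_0^p-\frac{C_r}{r}\rho_0^r-\lambda\frac{C_s}{s}\rho_0^s .
$$
Since $r>p$, we first fix $\rho_0\in(0,1)$ so small that $\frac1p\rho_0^p-\frac{C_r}{r}\rho_0^r\ge \frac1{2p}\rho_0^p$.

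Next we choose $\lambda_1>0$ with $\lambda_1\frac{C_s}{s}\rho_0^s\le\frac1{4p}\rho_0^p$, that is, $\lambda_1=\frac{s}{4pC_s}\rho_0^{p-s}$. This is where $s<p$ matters: with $s<p$ the $\lambda$-term dominates as $\rho\to0$, so $\rho_0$ must be fixed before $\lambda$ is restricted. Then $\alpha=\frac1{4p}\rho_0^p$ works for all $\lambda\in(0,\lambda_1)$ and all $L>0$, for both functionals.

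For the point $e$ we use the comparison functional. Fix $\varphi\in E$ with $\varphi_+\ne0$. Because $r>p>q$, we have $I_0(t\varphi)\to-\infty$ as $t\to+\infty$, so we can pick $t_0$ with $\|t_0\varphi\|_E>\rho_0$ and $I_0(t_0\varphi)<0$, and set $e=t_0\varphi$. Since $\lambda>0$ and $H_L\ge0$, we get $J_{\lambda,L}(e)\le I_0(e)<0$ and $I_\lambda(e)\le I_0(e)<0$. This choice is independent of $\lambda$ and $L$. Finally, $J_{\lambda,L}(0)=I_\lambda(0)=0$ is immediate.

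The one delicate point is the order of quantifiers: $\rho_0$ is chosen from $r>p$ alone, then $\lambda_1$ from $\rho_0$, and $e$ from $I_0$, so none of them depend on $L$ or on $\lambda$. The rest is routine.
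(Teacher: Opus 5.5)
Your proposal is correct and matches the paper's proof step by step. You use the same chain $I_0\ge J_{\lambda,L}\ge I_\lambda$ coming from $0\le\Psi\le1$, and the same lower bound via \eqref{eq:estimativaE} and the embedding with $\rho_0$ fixed from $r>p$ before $\lambda$ is restricted; the same choice of $e$ from $I_0(t\varphi)\to-\infty$ appears, and only the explicit constants differ (your $\lambda_1=\frac{s}{4pC_s}\rho_0^{p-s}$, $\alpha=\frac{1}{4p}\rho_0^p$ versus the paper's $\lambda_1<\frac{s}{2pC_s}\rho_0^{p-s}$).
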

\begin{proof}
    Indeed, if $\|u\|_E = \rho < 1$, by \eqref{eq:estimativaE} we get
    \begin{equation*}
        I_0(u)\ge J_{\lambda, L}(u) \geq I_{\lambda}(u)
        \geq \frac{1}{p}\|u\|_E^{p} - \frac{C_r}{r}\|u\|_E^{r} - \lambda \frac{C_s}{s}\|u\|_{E}^s = \rho^p \left(\frac{1}{p} - \frac{C_r}{r}\rho^{r-p} - \lambda \frac{C_s}{s}\rho^{s-p}\right).
    \end{equation*}
  
    Choose now $\rho_0\in(0,1)$  such that 
    $$\frac{1}{p}-\frac{C_r}{r}\rho_0^{r-p}>\frac{1}{2p}$$
    and take 
    \begin{equation*}
        \lambda_1 < \frac{s\rho_{0}^{p-s}}{2pC_s}.
    \end{equation*}
    Then, for $\lambda \in (0,\lambda_1)$ and $\|u\|_E = \rho_0$,
    \begin{equation*}
         J_{\lambda, L}(u) \geq I_{\lambda}(u)
        \geq \rho_{0}^p \left(\frac{1}{2p} - \lambda\frac{C_s}{s}\rho_{0}^{s-p}\right) \geq  \rho_{0}^p \left(\frac{1}{2p} - \lambda_1\frac{C_s}{s}\rho_{0}^{s-p}\right) =:\alpha> 0.
    \end{equation*}
    Furthermore, fixed $\varphi \in E\setminus\{0\}$ such that $\varphi\ge 0$ we have
    \begin{equation*}
        I_{0}(t\varphi) = \frac{t^p}{p} \int_{\mathbb{R}^N} |\nabla \varphi|^p + \frac{t^q}{q} \int_{\mathbb{R}^N} |\nabla \varphi|^q - \frac{t^r}{r} \int_{\mathbb{R}^N} \varphi^r \to -\infty
    \end{equation*}
    as $t \to + \infty$, and so we easily get $e\in E$ with the desired properties.

\end{proof}

It follows from  Lemma \ref{GPM-positivo} that for any $\lambda \in (0,\lambda_1)$ and $L>0$ there exists a sequence $\{u_n\} := \{u_n^{\lambda, L}\}\subset E$ such that
\begin{equation} \label{J-sequence1}
    J_{\lambda,L}(u_n) = c_{\lambda,L} + o_n(1) \quad \text{and} \quad J'_{\lambda,L}(u_n) = o_n(1)
\end{equation}
where as usual
$$ c_{\lambda, L} := \inf_{\gamma \in \Gamma} \max_{t \in [0,1]} J_{\lambda, L}(\gamma(t))\ge \alpha>0$$
and $\Gamma := \left\{\gamma \in C\left([0,1]; E \right): \gamma(0) = 0,  \gamma(1) = e \right\}$.
Note that the above proof says that the class $\Gamma$
is the same for $I_0, I_\lambda$ and $J_{\lambda,L}$ and so we have
\begin{equation} \label{minimaxs1}
    0 < c_{\lambda} := \inf_{\gamma \in \Gamma} \max_{t \in [0,1]} I_{\lambda}(\gamma(t)) \leq c_{\lambda, L}  \leq c_{0} = \inf_{\gamma \in \Gamma} \max_{t \in [0,1]} I_{0}(\gamma(t)) < +\infty,
\end{equation}
and moreover $c_\lambda\to c_0$ as $\lambda\to 0.$

\begin{lemma} \label{ltd-truncado1}
    There exists $L_0>0$  and $\lambda_0 = \lambda_0(L_0) \in (0,\lambda_1)$ such that,
    for any $L\ge L_0$ and any $\lambda\in(0,\lambda_0)$
      the sequence in \eqref{J-sequence1}
      for the functional $J_{\lambda,L}$ is bounded by $L$.

\end{lemma}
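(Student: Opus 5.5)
The plan is to compare the Palais--Smale sequence \eqref{J-sequence1} with the Ambrosetti--Rabinowitz combination $J_{\lambda,L}(u_n)-\frac1r J'_{\lambda,L}(u_n)[u_n]$, using $p<r$. The threshold $L_0$ will come from the untruncated problem, and $\lambda_0$ from the smallness of the $s$-term on the support of $H_L$. Throughout I take $\Psi$ nonincreasing on $[0,\infty)$, so that $\Psi'\le 0$.

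\textbf{Step 1 (basic inequality).} Write $A_n=\|\nabla u_n\|_p^p$ and $B_n=\|\nabla u_n\|_q^q$. By \eqref{minimaxs1} we have $c_{\lambda,L}\le c_0$, with $c_0$ independent of $\lambda$ and $L$. Hence
\begin{equation*}
\frac{r-p}{rp}\,(A_n+B_n)\le c_0+o_n(1)+o_n(1)\|u_n\|_E+\lambda\Big|H_L(u_n)-\tfrac1r H_L'(u_n)[u_n]\Big| .
\end{equation*}
From the formula for $H_L'$, the term $H_L'(u)[u]$ equals $\Psi_L(u)\int u_+^s$ plus $\frac{p}{sL^p}\Psi'(\cdot)\big(A+B^{p/q}\big)\int u_+^s$. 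Since $A+B^{p/q}=\|u\|_E^p\le 2^pL^p$ on the support of $H_L$, the bracket is bounded by $C_\Psi\int u_{n+}^s\le C_\Psi C_s\|u_n\|_E^s$. Moreover it vanishes when $\|u_n\|_E> 2L$. The constant $C_\Psi$ depends only on $\|\Psi'\|_\infty,p,r,s$.

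\textbf{Step 2 (cases).} I will use two elementary estimates. First, \eqref{eq:usefull} gives $A_n+B_n\ge \kappa\|u_n\|_E-2$. Second, whenever $\|u_n\|_E\ge 1$, either $A_n\ge\frac12\|u_n\|_E^p$ or $B_n\ge 2^{-q/p}\|u_n\|_E^q$.

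\emph{Case $\|u_n\|_E>2L$.} The $\lambda$-term is zero, and exactly as in Lemma \ref{ltd1} we get $\limsup\|u_n\|_E\le K$. Here $K$ depends only on $c_0,p,q,r$. Choosing $L_0>K$, this case is impossible along the sequence for every $L\ge L_0$ and every $\lambda$.

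\emph{Case $\|u_n\|_E\le L$.} This is what we want.

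\emph{Case $L<\|u_n\|_E\le 2L$.} This is the genuine difficulty. The right-hand side is only controlled by $c_0+\lambda C\|u_n\|_E^s$. Because $s>q^*>q$, the lower bound $B_n\gtrsim\|u_n\|_E^q$ alone does not absorb $\lambda\|u_n\|_E^s$ uniformly in $L$.

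\textbf{Step 3 (the annulus; main obstacle).} I would try first to exploit $s<p$ together with the structure of $\Psi'$. On the annulus $\Psi_L$ varies, and the $\Psi'$-term in $H_L'(u_n)[u_n]$ has a definite sign, since $\Psi'\le0$. Testing instead with $J'_{\lambda,L}(u_n)[u_n]$ directly, I would write
\begin{equation*}
A_n+B_n-\int u_{n+}^r=\lambda\Big(\Psi_L\!\int u_{n+}^s+\tfrac{p}{sL^p}\Psi'(\cdot)\|u_n\|_E^p\!\int u_{n+}^s\Big)+o_n(1)\|u_n\|_E .
\end{equation*}
Here the $\Psi'$ contribution is nonpositive, so $A_n+B_n\le \int u_{n+}^r+\lambda\int u_{n+}^s+o_n(1)\|u_n\|_E$. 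Combining this with Step 1 (multiplied by $r$), the $r$-integral cancels and we obtain
\begin{equation*}
\Big(\tfrac rp-1\Big)A_n+\Big(\tfrac rq-1\Big)B_n\le rc_0+\lambda C\int u_{n+}^s+o_n(1)\|u_n\|_E .
\end{equation*}
The aim is then a splitting according to which of $A_n$ or $B_n$ dominates $\|u_n\|_E$.

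If $A_n\ge\frac12\|u_n\|_E^p$, then $s<p$ gives $\lambda C\|u_n\|_E^s\le \frac{r-p}{4p}A_n+C'\lambda^{p/(p-s)}$ by Young's inequality. This yields a bound $\|u_n\|_E\le K'$ independent of $L$, and it is $<L$ once $L_0>K'$ and $\lambda<1$.

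If instead $B_n$ dominates, I would interpolate $\|u_n\|_s$ between $\|u_n\|_{q^*}\lesssim B_n^{1/q}$ and $\|u_n\|_{p^*}\lesssim A_n^{1/p}$. The hope is that the $q^*$-part appears with an exponent that is still absorbable for $\lambda$ below a threshold independent of $L$.

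Finally, $L_0$ is taken larger than all these $L$-independent bounds, and $\lambda_0<\lambda_1$ is taken small enough for the absorptions. Then every case forces $\limsup\|u_n\|_E\le L$ for all $L\ge L_0$ and $\lambda\in(0,\lambda_0)$. The $B_n$-dominant subcase of the annulus is where I expect the argument to be delicate, and it is the step I would check first.
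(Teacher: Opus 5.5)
There is a genuine gap, and it is exactly the step you flagged: the $B_n$-dominant part of the annulus $L<\|u_n\|_E\le 2L$ with $\lambda$ independent of $L$. The interpolation you propose cannot close it. Write $\frac1s=\frac{\theta}{q^*}+\frac{1-\theta}{p^*}$. Then $\int|u|^s\lesssim B^{s\theta/q}A^{s(1-\theta)/p}$, and the total exponent is $s\bigl(\frac{\theta}{q}+\frac{1-\theta}{p}\bigr)=1+\frac sN>1$. Concretely, take $u_R(x)=R\,w(x/R)$ with $0\le w\in C_0^\infty(\mathbb R^N)$. Then $A\simeq B\simeq R^N$ and $\|u_R\|_E\simeq R^{N/q}$, so this is your $B$-dominant regime, while $\int|u_R|^s\simeq R^{N+s}$. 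Hence for any fixed $\lambda>0$ the inequality you are left with, $(\frac rp-1)A+(\frac rq-1)B\le rc_0+\lambda C\int u_+^s$, holds on the annuli for all large $L$. Since it is compatible with arbitrarily large $\|u\|_E$, no absorption with $\lambda$ independent of $L$ is possible.

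The missing idea is to let $\lambda_0$ depend on $L_0$, as the notation $\lambda_0=\lambda_0(L_0)$ indicates. The annulus is then trivial, because on the support of $H_L$ you already know $\|u_n\|_E<2L$. This is the paper's proof. From the Ambrosetti--Rabinowitz combination and $c_{\lambda,L}\le c_0$ one gets, in the annulus,
\[
c_0+o_n(1)\ \ge\ \frac{r-p}{pr}\bigl(\kappa L-2\bigr)-\lambda\,\frac{2^pp\|\Psi'\|_\infty+r-s}{sr}\,C_s^s\,(2L)^s .
\]
First fix $L$ with $\kappa\frac{r-p}{pr}L>c_0+2\frac{r-p}{pr}+1$; this also excludes $\|u_n\|_E\ge 2L$, as in your Step 2. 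Then take $\lambda\le\bigl(\frac{2^pp\|\Psi'\|_\infty+r-s}{sr}C_s^s(2L)^s\bigr)^{-1}$, and below $\lambda_1$. The $s$-term is then at most $1$, and letting $n\to\infty$ gives a contradiction. No splitting into $A_n$ and $B_n$, and no use of $s<p$, is needed.

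This yields the bound only for $\lambda\lesssim L^{-s}$, not for all $L\ge L_0$ with a single $\lambda_0$. The paper's argument does not give that uniformity either, but it is never used: $L$ is fixed once and for all afterwards. The uniform-in-$L$ target you set yourself is stronger than what is needed, and it is precisely what makes your Step 3 hard.

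A smaller point: the sign of $\Psi'$ does not help in Step 3. To bound $(\frac rp-1)A_n+(\frac rq-1)B_n$ from above, you need a lower bound on $A_n+B_n$ from $J'_{\lambda,L}(u_n)[u_n]$, and there $\Psi'\le 0$ works against you. Your displayed inequality is still true, but only because it is $r$ times the bound of Step 1, with the $\Psi'$-term controlled through $\|u_n\|_E\le 2L$.
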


\begin{proof}
We argue by contraddition.
Suppose  that for any $L>0$
and $\lambda \in (0,\lambda_1)$,
the sequence $\{u_n\}$ associated to $J_{\lambda,L}$ satisfying \eqref{J-sequence1} is such that 
$\|u_n\|_E > L$.
In particular this holds for
the pair $(L,\lambda)$ where
    $L$ is so large that
\begin{equation}\label{eq:L1}
    L> \frac{pr}{\kappa (r-p)}(c_{0} + 2\frac{r-p}{pr} + 1)
\end{equation}
 and in such a way that $\lambda$ given by
\begin{equation}\label{eq:lambda1}
    \lambda = \left(\frac{2^pp\|\Psi'\|_{\infty} + r - s}{sr}  C_s^s (2L)^s\right)^{-1}
\end{equation}
 belongs to $(0,\lambda_1)$.

Here, $c_0 > 0$ is the constant in \eqref{minimaxs1}, $\kappa>0$ is the constant which appear in \eqref{eq:usefull} and $C_s$ is the embedding constant of $E$ in $L^s(\mathbb{R}^N)$.

If it were
$\|u_n\|_E \geq 2L$ for all $n \in \mathbb{N}$ sufficiently large,
then, since the truncated term is zero,
\begin{align*}
    c_{0} + o_n(1) + o_n(1)\|u_n\|_E &\geq c_{\lambda, L} + o_n(1) + o_n(1)\|u_n\|_E \\
    &= J_{\lambda, L}(u_n) - \frac{1}{r} J'_{\lambda, L}(u_n)[u_n] \\
    &= \frac{r-p}{pr}\int_{\mathbb{R}^N}|\nabla u|^p +  \frac{r-q}{rq} \int_{\mathbb{R}^N}|\nabla u|^q \\
    &\geq \frac{r-p}{pr} \left(\kappa\|u_n\|_E-2 \right). 
\end{align*}
Consequently, for all $n \in \mathbb{N}$ large, 
\begin{eqnarray*}
   c_{0} + 2\frac{r-p}{pr} + 1
   &\ge& c_{\lambda,L} + 2\frac{r-p}{pr} + o_n(1) \\ &\geq& \left(\kappa\frac{r-p}{pr} - o_n(1)\right) \|u_n\|_E \\
    &\geq &\left(\kappa\frac{r-p}{pr} - o_n(1)\right) 2L   
\end{eqnarray*}
and so taking the limit as $n\to +\infty$
we get a contradicts with \eqref{eq:L1}.

So it has to be  $L < \|u_n\|_E < 2L$. But then again, 
\begin{align*}
    c_{0} + o_n(1) + o_n(1)\|u_n\|_E &= c_{\lambda, L} + o_n(1) + o_n(1)\|u_n\|_E \\
    &\geq J_{\lambda, L}(u_n) - \frac{1}{r} J'_{\lambda, L}(u_n)[u_n] \\
    &= \frac{r-p}{pr} \int_{\mathbb{R}^N}|\nabla u|^p +  \frac{r-q}{qr} \int_{\mathbb{R}^N}|\nabla u|^q + \lambda \left(\frac{1}{r}H'_L(u_n)u_n - H_L(u_n) \right) \\
    &\ge \frac{r-p}{pr}\left(\kappa\|u_n\|_E-2 \right) +\\
    & \ \ + \lambda \left(\frac{p}{srL^p} \Psi'\left(\frac{\|u_n\|_E^p}{L^p}\right)\|u_n\|_{E}^{p} + \frac{s-r}{rs}\Psi_L(u_n) \right) \int_{\mathbb{R}^N}u_{n+}^s \\
    &\geq  \frac{r-p}{pr}\left(\kappa\|u_n\|_E-2 \right) + \lambda \left(-\frac{p}{srL^p} \|\Psi'\|_{\infty} \|u_n\|_E^p + \frac{s-r}{sr}\right)C_s^s \|u_n\|_E^s \\
    &=\frac{r-p}{pr}\left(\kappa\|u_n\|_E-2 \right) - \lambda \left(\frac{p}{srL^p} \|\Psi'\|_{\infty} \|u_n\|_E^p + \frac{r-s}{sr}\right)C_s^s \|u_n\|_E^s.
\end{align*}
Thus, for all $n \in \mathbb{N}$ sufficiently large, we obtain 
\begin{align*}
    c_{0} + 2\frac{r-p}{pr} + o_n(1) &\geq \left( \kappa\frac{r-p}{pr} - o_n(1) \right) \|u_n\|_E - \lambda \left(\frac{p \|\Psi'\|_{\infty}}{srL^p} (2L)^p + \frac{r-s}{sr}\right) C_s^s \|u_n\|_E^{s} \\
    &\geq \left( \kappa\frac{r-p}{pr} - o_n(1) \right) L - \lambda \frac{2^pp \|\Psi'\|_{\infty} + r-s}{sr}  C_s^s (2L)^s.
\end{align*}
Consequently, passing to the limit, by \eqref{eq:lambda1}
we get
\begin{equation*}
    c_{0} + 2\frac{r-p}{pr} \geq \kappa\frac{r-p}{pr} L - \lambda \frac{2^pp \|\Psi'\|_{\infty} +r-s }{sr}  C_s^s (2L)^s=
    \kappa\frac{r-p}{pr} L-1
\end{equation*}
that again contradicts \eqref{eq:L1}. Then the Lemma is proved.

\end{proof}

In particular, the proof of Lemma \ref{ltd-truncado1} shows that any sequence $\{w_n\} \subset E$ satisfying \eqref{J-sequence1} also fulfills the assumptions required to apply Lemma \ref{ltd-truncado1}, and therefore it is conclusion remains valid for $\{w_n\}$. From this point on, we fix $L>0$ sufficiently large and implicitly assume that it is chosen so that Lemma \ref{ltd-truncado1} is always applicable.

The next result shows that the sequence $\{u_n\}$ does not vanish in $L^r(\mathbb R^N)\cap L^s(\mathbb R^N)$.
\begin{lemma}
    For any $\lambda \in (0,\lambda_0)$ and $L\ge L_0$ fixed, there exists $C_{\lambda} > 0$ such that
    \begin{equation*}
        \int_{\mathbb{R}^N} |u_n|^r + \int_{\mathbb{R}^N} |u_n|^s \geq C_\lambda.
    \end{equation*}
\end{lemma}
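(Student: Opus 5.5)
Following Lemma \ref{minoracao}, I would compute $J_{\lambda,L}(u_n)-\frac1q J'_{\lambda,L}(u_n)[u_n]$. By Lemma \ref{ltd-truncado1} we have $\|u_n\|_E\le L$, so the left side equals $c_{\lambda,L}+o_n(1)$. The derivative term is $o_n(1)$ because $\{u_n\}$ is bounded and $J'_{\lambda,L}(u_n)\to 0$.

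On the right side, the gradient contributions are $\frac{q-p}{pq}\int|\nabla u_n|^p\le 0$, since the $q$-gradient term cancels exactly. The $r$-term gives $\frac{r-q}{qr}\int u_{n+}^r$. It remains to handle $-\lambda\big(H_L(u_n)-\frac1q H_L'(u_n)[u_n]\big)$. Using the explicit formula for $H_L'$, and the identity that the bracket evaluated at $\varphi=u_n$ equals $\int|\nabla u_n|^p+(\int|\nabla u_n|^q)^{p/q}=\|u_n\|_E^p$, this term is
$$
\lambda\Big(\frac{s-q}{qs}\Psi_L(u_n)+\frac{p}{qsL^p}\Psi'\Big(\frac{\|u_n\|_E^p}{L^p}\Big)\|u_n\|_E^p\Big)\int u_{n+}^s .
$$
Since $0\le\Psi\le 1$ and $\|u_n\|_E^p\le 2^pL^p$ on the support of $\Psi'$, the coefficient is bounded by $\lambda\big(\frac{s-q}{qs}+\frac{2^pp\|\Psi'\|_\infty}{qs}\big)$, which is a constant independent of $n$. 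I do not need the sign of $\Psi'$ for this; the absolute value bound suffices.

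Combining, and using $c_{\lambda,L}\ge c_\lambda\ge\alpha>0$ from \eqref{minimaxs1} and Lemma \ref{GPM-positivo}, we get
$$
\alpha+o_n(1)\le M\Big(\int|u_n|^r+\int|u_n|^s\Big),\qquad M:=\max\Big\{\frac{r-q}{qr},\ \lambda\frac{s-q+2^pp\|\Psi'\|_\infty}{qs}\Big\}.
$$
Hence for $n$ large we may take $C_\lambda=\alpha/(2M)$.

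The only delicate point is the $\Psi'$ term from the truncation. It is handled by the support localization $\|u_n\|_E\le 2L$, which actually gives $\le L$ here, together with the fact that the derivative of $\|u\|_E^p$ at $u$ applied to $u$ equals $p\|u\|_E^p$. Everything else is a direct adaptation of Lemma \ref{minoracao}.
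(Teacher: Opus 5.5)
Your argument is correct and is essentially the paper's: both evaluate $J_{\lambda,L}(u_n)-\frac1q J'_{\lambda,L}(u_n)[u_n]$, discard the nonpositive term $\frac{q-p}{pq}\int_{\mathbb{R}^N}|\nabla u_n|^p$, and bound what remains by a constant times $\int_{\mathbb{R}^N}|u_n|^r+\int_{\mathbb{R}^N}|u_n|^s$. The only difference is that the paper uses $\|u_n\|_E\le L$ to replace $J_{\lambda,L}$ by $I_\lambda$ along the sequence, so the $\Psi'$ term never appears (it vanishes anyway, since $\Psi'\equiv 0$ on $[0,1]$); your uniform bound on that term makes the estimate valid for any bounded Palais--Smale sequence of $J_{\lambda,L}$, without needing the truncation to be inactive.
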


\begin{proof}
    For $n \in \mathbb{N}$ sufficiently large, by Lemma \ref{ltd-truncado1}, we have $J_{\lambda, L}(u_n) = I_{\lambda}(u_n)$. Thus, 
    \begin{align*}
        c_\lambda + o_n(1) &\leq c_{\lambda, L} + o_n(1) = J_{\lambda, L}(u_n) -\frac{1}{q} J'_{\lambda, L}(u_n)u_n \\
        &=\frac{q-p}{pq}\int_{\mathbb{R}^N}|\nabla u_n|^p
        +\frac{r-q}{qr}\int_{\mathbb{R}^N}u_{n+}^r
        +\lambda \frac{s-q}{sq}\int_{\mathbb{R}^N}u_{n+}^s \\
        &\leq \frac{r-q}{qr}\int_{\mathbb{R}^N}|u_n|^r 
        + \lambda_0\frac{s-q}{sq}\int_{\mathbb{R}^N}|u_n|^s,
    \end{align*}
    and the result follows.
\end{proof}
Thus, the Lions Lemma tells us that
\begin{equation*}
   \sup_{y\in \mathbb{R}^N}\int_{B_1(y)}|u_n|^t > C_t>0
\end{equation*}
and there exists a sequence $\{y_n\} \subset \mathbb{R}^N$ such that 
\begin{equation} \label{ltdinf}
    \int_{B_1(y_n)}|u_n|^t \geq C_t.
\end{equation}
Then the translated sequence $v_n(x):= u_n(x + y_n)$ has norm less then $L$. Consequently, there exists $v_{\lambda,L} = v \in E$ such that $v_n \rightharpoonup v$ in $E$ (up to subsequence). This together with \eqref{ltdinf} implies 
\begin{equation*}
    \int_{B_1(0)}|v|^t \geq C_t,
\end{equation*}
that implies $v \neq 0$ in $E$. On the other hand, since 
\begin{equation*}
    J_{\lambda, L}(v_n) = I_{\lambda}(v_n) = I_{\lambda}(u_n) = J_{\lambda, L}(u_n) \to c_{\lambda, L}
\end{equation*}
 and 
 \begin{equation*}
     \|J'_{\lambda, L}(v_n)\| = \|I'_{\lambda}(v_n)\|=\|I'_{\lambda}(u_n)\| = \|J'_{\lambda, L}(u_n)\| = o_n(1)
 \end{equation*}
 for all sufficiently large $n \in \mathbb{N}$, and following the same construction as before, we obtain that $v$ is a critical point of the energy functional $J_{\lambda, L}$. Moreover, since $v_n \rightharpoonup v$ in $E$ and $\|v_n\|_E \leq L$ for all sufficiently large $n$, it follows by weak lower semicontinuity that $\|v\|_E \leq L$. Consequently, $v$ lies within the region where the truncation is inactive, and hence it is also a critical point of $I_{\lambda}$. Therefore, $v$ is a weak solution of problem \eqref{P} and, clearly, $v$ is a non-negative function.

\subsection{Proof of Theorem \ref{th:supercritical}
}

Now, let us assume that $q^* < r < p^* < s$ and $p<r$. Clearly the functional $I_{\lambda}$ is not well defined. Thus, motivated by  paper \cite{AR24}, we prove the existence of a weak solution to problem \eqref{P} by means of a suitable truncation technique.

For any $K>1$, define
$$
\phi_{\lambda, K}(t) =
\begin{cases} 
0 & \text{if } t < 0, \\
t^{r-1} + \lambda t^{s-1} & \text{if } 0 \leq t \leq K, \\
(1 + \lambda K^{s-r}) t^{r-1} & \text{if } t > K.
\end{cases}
$$
Clearly, 
\begin{equation} \label{des1}
    0 \leq \phi_{\lambda, K}(t) \leq (1 + \lambda K^{s-r})|t|^{r-1}\quad \forall t \in \mathbb{R}.
\end{equation}

Consider the truncated problem
\begin{equation}\label{P-truncado}
    -\Delta_p u-\Delta_q u= \phi_{\lambda,K}(u)\quad\mbox{in} \quad \mathbb{R}^N.
\tag{$\mathcal{P}_T$}
\end{equation}

\begin{definition} 
    We say that a function $u \in E$ is a weak solution of equation \eqref{P-truncado} if 
\begin{equation} \label{weak-truncado}
    \int_{\mathbb{R}^N}\left(|\nabla u|^{p-2}\nabla u+|\nabla u|^{q-2}\nabla u\right)\nabla\varphi-\int_{\mathbb{R}^N} \phi_{\lambda, K}(u)\varphi=0,
\end{equation}
for all $\varphi \in E$.
\end{definition}

We observe that weak solution of \eqref{P-truncado} are precisely the critical points of the functional energy defined by 
$$
J_{\lambda, K}(u) = \frac{1}{p}\int_{\mathbb{R}^N}|\nabla u|^p+\frac{1}{q}\int_{\mathbb{R}^N}|\nabla u|^q-\int_{\mathbb{R}^N}\Phi_{\lambda, K}(u), \quad u \in E,
$$
where 
$$
\Phi_{\lambda, K}(t)=\int_{0}^{t} \phi_{\lambda, K}(\tau) d \tau.
$$
Clearly by \eqref{des1} and $E \hookrightarrow L^r(\mathbb{R}^N)$, $J_{\lambda, K}$ is well defined. Furthermore,
\begin{equation*}
    0 \leq \Phi_{\lambda, K}(t) \leq \left(\frac{1 + \lambda K^{s-r}}{r}\right)|t|^r,
\end{equation*}
for all $t \in \mathbb{R}$. It is also not difficult to verify that if $u \in E$ is a weak solution of \eqref{P-truncado} then necessarily $u \geq 0$. It suffices to take $\varphi = u^{-}$ in the identity \eqref{weak-truncado}.

\begin{lemma} \label{GPMsuper}
Assume that $q^* < r < p^* < s$ and $p<r$. For any $K > 1$, there are $M_K > 0$, $\rho > 0$, $\alpha > 0$ and $e \in E$ (that do not depend on $\lambda$ and $K$) such that 
$$
\|e\|_E> \rho \quad \text{and} \quad \inf_{\|u\|_E=\rho} J_{\lambda, K}(u) \geq \alpha > 0 = J_{\lambda, K}(0) >  J_{\lambda, K}(e),
$$
for all $\lambda \in (0,M_K)$.
\end{lemma}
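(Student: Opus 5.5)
The plan is to repeat the argument of Lemma \ref{MP} and Lemma \ref{GPM-positivo}, working with the explicit bounds on $\Phi_{\lambda,K}$. There are two things to control: a uniform lower bound near the origin, and a single point $e$ where the functional is negative.

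\textbf{Lower bound near the origin.} The growth bound $0\le \Phi_{\lambda,K}(t)\le \frac{1+\lambda K^{s-r}}{r}|t|^r$, which follows from \eqref{des1}, is the key tool. Take $\|u\|_E=\rho<1$. Using \eqref{eq:estimativaE} together with the embedding $E\hookrightarrow L^r(\mathbb R^N)$ gives
\[
J_{\lambda,K}(u)\ \ge\ \frac1p\rho^p-\frac{(1+\lambda K^{s-r})C_r^r}{r}\rho^r .
\]
I will set $M_K:=K^{r-s}$, so that $\lambda\in(0,M_K)$ forces $1+\lambda K^{s-r}<2$. The bound then becomes
\[
J_{\lambda,K}(u)\ \ge\ \rho^p\Big(\frac1p-\frac{2C_r^r}{r}\rho^{r-p}\Big).
\]
Since $r>p$, I can fix $\rho\in(0,1)$ small so that the bracket exceeds $\frac1{2p}$. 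With $\alpha:=\rho^p/(2p)$, both $\rho$ and $\alpha$ depend only on $p,r,C_r$, and not on $\lambda$ or $K$. As in Lemma \ref{MP}, the constant should be written $C_r^r$ or absorbed into $C_r$.

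\textbf{The point $e$.} For the negative value I will use a lower bound on $\Phi_{\lambda,K}$ that does not depend on $\lambda$ or $K$. Because $K>1$ and $\lambda>0$, we have $\phi_{\lambda,K}(t)\ge t^{r-1}$ for all $t\ge0$: on $[0,K]$ this is clear, and for $t>K$ it holds because $1+\lambda K^{s-r}\ge1$. Hence $\Phi_{\lambda,K}(t)\ge t_+^r/r$, so that
\[
J_{\lambda,K}(u)\le I_0(u)\quad\text{for all } u,
\]
where $I_0$ is the comparison functional introduced before Lemma \ref{GPM-positivo}. Fix $\varphi\ge0$, $\varphi\neq0$. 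Since $r>p>q$, $I_0(t\varphi)\to-\infty$ as $t\to\infty$, so I can choose $t_0$ with $\|t_0\varphi\|_E>\rho$ and $I_0(t_0\varphi)<0$. Setting $e:=t_0\varphi$ gives $J_{\lambda,K}(e)\le I_0(e)<0$, and $e$ is independent of both $\lambda$ and $K$. Finally, $J_{\lambda,K}(0)=0$ is immediate.

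\textbf{Expected difficulty.} The only delicate point is making sure that $\rho,\alpha,e$ are genuinely uniform. The dependence on $K$ is pushed entirely into the admissible range $M_K$ of $\lambda$, and the comparison with $I_0$ makes $e$ uniform. There is no real obstacle beyond choosing $M_K$ so that the constant $1+\lambda K^{s-r}$ stays bounded, for example by $2$.
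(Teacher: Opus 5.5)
Your proof is correct, and it differs from the paper's only in how you build $e$.

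\textbf{The estimate on the sphere.} This part matches the paper's argument. Both use the growth bound coming from \eqref{des1}, the inequality \eqref{eq:estimativaE}, the embedding into $L^r(\mathbb R^N)$, and a choice of $M_K$ that forces $1+\lambda K^{s-r}<2$. The paper takes $M_K=K^{-s}$ and you take the larger $K^{r-s}$; either works. You are also right that the constant should be $C_r^r$.

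\textbf{The construction of $e$.} Here the two routes differ. The paper fixes $\varphi\in C_0^\infty(B_1(0))$ and computes $\Phi_{\lambda,K}(t\varphi)$ explicitly by splitting $\int_0^K+\int_K^{t\varphi}$. This gives $J_{\lambda,K}(t\varphi)\le I_0(t\varphi)+\lambda\frac{s-r}{rs}K^s|B_1(0)|$. It then uses $\lambda<M_K=K^{-s}$ a second time, so that the additive constant no longer depends on $\lambda$ and $K$.

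You instead use the pointwise bound $\phi_{\lambda,K}(t)\ge t_+^{r-1}$. This gives $\Phi_{\lambda,K}(t)\ge t_+^r/r$, hence $J_{\lambda,K}\le I_0$ on all of $E$. This is the same comparison functional (called $J_0$) that the paper itself uses later in Lemma \ref{ltd}.

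\textbf{What your route buys.} It is shorter. It yields an $e$ valid for every $\lambda>0$ and every $K>1$, with no smallness condition on $\lambda$. It also avoids a flaw in the paper's computation: the splitting over the whole ball $B_1(0)$ tacitly assumes $t\varphi\ge K$ there, which fails near $\partial B_1(0)$ where $\varphi$ vanishes. The paper's final inequality is nevertheless true, precisely because of the comparison $\Phi_{\lambda,K}(t)\ge t_+^r/r$ you use.
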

\begin{proof}
    For $u \in E$ satisfying $\|u\|_E =\rho < 1$, we have
\begin{align*}
J_{\lambda, K}(u) &\geq \frac{1}{p} \|u\|_{E}^{p} - \int_{\mathbb{R}^N} \Phi_K(u) \geq \frac{1}{p} \|u\|_{E}^{p} - \left(\frac{1 + \lambda K^{s-r}}{r}\right)\int_{\mathbb{R}^N}|u|^r \\
&\geq \frac{1}{p} \|u\|_{E}^{p} - \left(\frac{1 + \lambda K^{s-r}}{r}\right)C\|u\|_{E}^r =\rho^{p}\left(\frac{1}{p} - \frac{1 + \lambda K^{s-r}}{r}C \rho^{r-p} \right).
\end{align*}
Take $M_K = 1/K^s > 0$. If $\lambda \in (0, M_K)$, then
$$
J_{\lambda, K}(u) \geq \rho^{p}\left(\frac{1}{p} - \frac{1 + K^{-r}}{r}C \rho^{r-p} \right)>\rho^{p}\left(\frac{1}{p} - \frac{2}{r}C \rho^{r-p} \right)
$$
Since $r-p > 0$ and $\rho^{r-p} \to 0$ as $\rho \to 0^+$, taking a sufficiently small $\rho>0$ such that
$$
\frac{1}{p}-\frac{2}{r}C\rho^{r-p} > 0,
$$
 we obtain 
$$
J_{\lambda, K}(u) > \rho^{p}\left(\frac{1}{p} - \frac{2}{r}C \rho^{r-p} \right) := \alpha > 0.
$$

Now, 
fixed $\varphi \in C_{0}^{\infty}(B_1(0))$ with $\varphi > 0$,
for all sufficiently large $t>0$  it holds true 
\begin{align*}
    J_{\lambda, K}(t\varphi) &= \frac{t^p}{p} \int_{\mathbb{R}^N}|\nabla\varphi|^p + \frac{t^q}{q} \int_{\mathbb{R}^N}|\nabla\varphi|^q - \int_{\mathbb{R}^N} \Phi_{\lambda, K}(t\varphi) \\
    &= \frac{t^p}{p} \int_{\mathbb{R}^N}|\nabla\varphi|^p + \frac{t^q}{q} \int_{\mathbb{R}^N}|\nabla\varphi|^q + \\
    &- \int_{B_1(0)} \left(  \int_{0}^{K} (\tau^{r-1} + \lambda\tau^{s-1})d\tau + \int_{K}^{t\varphi}(1+\lambda K^{s-r})\tau^{r-1} d\tau \right)  \\
    &=  \frac{t^p}{p} \int_{\mathbb{R}^N}|\nabla\varphi|^p + \frac{t^q}{q} \int_{\mathbb{R}^N}|\nabla\varphi|^q - \frac{(1+\lambda K^{s-r})t^r}{r}\int_{B_1(0)} \varphi^r + \lambda  \frac{s-r}{rs}K^s|B_1(0)|\\
    &\leq \frac{t^p}{p} \int_{\mathbb{R}^N}|\nabla\varphi|^p + \frac{t^q}{q} \int_{\mathbb{R}^N}|\nabla\varphi|^q - \frac{t^r}{r}\int_{B_1(0)} \varphi^r + \lambda\frac{s-r}{rs}K^s|B_1(0)|.
\end{align*}
Again, if $\lambda \in (0,M_K)$, then 
\begin{align*}
    J_{\lambda, K}(t \varphi) &\leq \frac{t^p}{p} \int_{\mathbb{R}^N}|\nabla\varphi|^p + \frac{t^q}{q} \int_{\mathbb{R}^N}|\nabla\varphi|^q - \frac{t^r}{r}\int_{B_1(0)} \varphi^r + \frac{s-r}{rs}|B_1(0)| \\
    &=t^r \left( \frac{t^{p-r}}{p} \int_{\mathbb{R}^N}|\nabla\varphi|^p + \frac{t^{q-r}}{q} \int_{\mathbb{R}^N}|\nabla\varphi|^q - \frac{1}{r}\int_{B_1(0)} \varphi^r + t^{-r} \frac{s-r}{rs}|B_1(0)|\right) \to -\infty
\end{align*}
as $t \to +\infty$, because $r>p>q$. Thus, there is a sufficiently large $t_0 > 0$ such that $\|t_0 \varphi\|_E > \rho$ and
$$
\frac{t_{0}^p}{p} \int_{\mathbb{R}^N}|\nabla\varphi|^p + \frac{t_{0}^q}{q} \int_{\mathbb{R}^N}|\nabla\varphi|^q - \frac{t_{0}^r}{r}\int_{B_1(0)} \varphi^r +  \frac{s-r}{rs}|B_1(0)| < 0.
$$
Taking $e = t_0 \varphi \in E$, it holds  $\|e\|_E > \rho$ and  $J_{\lambda, K}(e) < 0$, for all $\lambda \in (0,M_K)$.
This concludes the proof. 
\end{proof}

By  Lemma \ref{GPMsuper}, for any $\lambda \in (0, M_K)$ there exists a sequence of points $u_n = u_{\lambda, K}^n \in E$ such that 
$$
J_{\lambda, K}(u_n) = c_{\lambda, K} + o_n(1) \quad \text{and} \quad J_{\lambda, K}(u_n)=o_n(1),
$$
where 
$$
c_{\lambda,K}:=\inf_{\gamma \in \Gamma} \max_{t\in[0,1]} J_{\lambda,K}(\gamma(t))
$$
and $\Gamma := \{\gamma \in C([0,1],E) : \gamma(0)=0 \text{ and } \gamma(1) = e\}$.

\begin{lemma} \label{ltd}
Assume that $q^*<r<p^*<s$ and $p<r$. Then $\{u_n\}$ is bounded in $E$ by a constant that does not depend on $\lambda$ and $K$.
\end{lemma}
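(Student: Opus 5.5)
The plan is to run the Ambrosetti--Rabinowitz argument of Lemma \ref{ltd1}, but with the truncated nonlinearity and the exponent $\theta=r$. I would then bound the minimax level $c_{\lambda,K}$ by a quantity independent of $\lambda$ and $K$, so that the final bound on $\|u_n\|_E$ is uniform.

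\textbf{Step 1: the Ambrosetti--Rabinowitz condition for the truncation.} I would verify that $t\phi_{\lambda,K}(t)\ge r\,\Phi_{\lambda,K}(t)$ for all $t\in\mathbb R$. For $t\le 0$ both sides vanish.
\begin{itemize}
\item For $0\le t\le K$ we have $\Phi_{\lambda,K}(t)=t^r/r+\lambda t^s/s$. Since $s>r$,
$$t\phi_{\lambda,K}(t)-r\Phi_{\lambda,K}(t)=\lambda(1-r/s)t^s\ge 0 .$$
\item For $t>K$ a direct integration gives
$$r\Phi_{\lambda,K}(t)=(1+\lambda K^{s-r})t^r-\lambda(1-r/s)K^s .$$
Hence $t\phi_{\lambda,K}(t)-r\Phi_{\lambda,K}(t)=\lambda(1-r/s)K^s\ge0$.
\end{itemize}
The same formulas also give $\Phi_{\lambda,K}(t)\ge t_+^r/r$ for all $t$. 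For $t>K$ this follows from $\lambda K^{s-r}t^r-\lambda(1-r/s)K^s\ge \lambda K^{s-r}(t^r-K^r)\ge0$.

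\textbf{Step 2: a uniform bound on the level.} The inequality $\Phi_{\lambda,K}(t)\ge t_+^r/r$ gives $J_{\lambda,K}\le I_0$ on $E$, where $I_0$ is the comparison functional introduced before Lemma \ref{GPM-positivo}. The path $\gamma(t)=t e$ with $e=t_0\varphi$ from Lemma \ref{GPMsuper} belongs to $\Gamma$, and $e$ does not depend on $\lambda$ or $K$. Therefore
$$c_{\lambda,K}\le \max_{t\in[0,1]}I_0(te)\le \max_{\tau\ge0}\Big(\frac{\tau^p}{p}\|\nabla\varphi\|_p^p+\frac{\tau^q}{q}\|\nabla\varphi\|_q^q-\frac{\tau^r}{r}\|\varphi\|_r^r\Big)=:\bar c<\infty .$$
Since $r>p>q$, the maximum is finite, and $\bar c$ depends only on $\varphi$.

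\textbf{Step 3: the bound on $\{u_n\}$.} Using Step 1 and \eqref{eq:usefull}, exactly as in Lemma \ref{ltd1} with $d=r$, I would obtain
$$\bar c+o_n(1)+o_n(1)\|u_n\|_E\ge J_{\lambda,K}(u_n)-\tfrac1r J'_{\lambda,K}(u_n)[u_n]\ge \tfrac{r-p}{pr}\big(\kappa\|u_n\|_E-2\big).$$
Here the term $\int_{\mathbb R^N}\big(\tfrac1r\phi_{\lambda,K}(u_n)u_n-\Phi_{\lambda,K}(u_n)\big)$ has been dropped because it is nonnegative. It follows that $\limsup_n\|u_n\|_E\le \frac{pr}{\kappa(r-p)}\big(\bar c+2\tfrac{r-p}{pr}\big)+1$. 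This bound depends only on $p,q,r,N$ and $\varphi$, and not on $\lambda\in(0,M_K)$ or on $K$.

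The only delicate point is Step 1 for $t>K$. The truncation must not destroy the superlinearity condition, and it must not introduce a negative constant that depends on $K$ and $\lambda$. The explicit computation shows that the leftover constant $\lambda(1-r/s)K^s$ has the favorable sign, so no smallness condition on $\lambda$ is needed in this lemma.
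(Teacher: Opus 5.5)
Your proof is correct and follows essentially the paper's argument. Both use the Ambrosetti--Rabinowitz inequality $\Phi_{\lambda,K}(t)\le \frac1r\phi_{\lambda,K}(t)t$ with exponent $r$, and both bound $c_{\lambda,K}$ by the level of the $\lambda=0$ functional, which you get from $\Phi_{\lambda,K}(t)\ge t_+^r/r$ along the ray through the fixed $\varphi$ and the paper gets from monotonicity of $J_{\lambda,K}$ in $\lambda$; your explicit computation for $t>K$ fills in what the paper only asserts.
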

\begin{proof}
    Indeed, we observed that
    $$
    \Phi_{\lambda, K}(t) - \frac{1}{r} \phi_{\lambda, K}(t)t \leq 0, \quad \forall t\in \mathbb{R}.
    $$
    Consequently,
\begin{align*}
    c_{\lambda, K} + o_n(1) + o_n(1)\|u_n\|_{E} &\geq J_{\lambda, K}(u_n) - \frac{1}{r} J'_{\lambda, K}(u_n)[u_n] \\
    &= \frac{r-p}{rp}\int_{\mathbb{R}^N}|\nabla u_n|^p + \frac{r-q}{rq}\int_{\mathbb{R}^N}|\nabla u_n|^q +\\
    &- \int_{\mathbb{R}^N} \left(\Phi_{\lambda, K}(u_n) - \frac{1}{r}\phi_{\lambda, K}(u_n)u_n \right) \\
    &\geq \frac{r-p}{rp} \left( \int_{\mathbb{R}^N}|\nabla u_n|^p + \int_{\mathbb{R}^N}|\nabla u_n|^q \right) \\
    &\geq \kappa \frac{r-p}{rp}\|u_n\|_E - 2 \frac{r-p}{rp}.
\end{align*}
We deduce
\begin{equation}\label{limited}
    c_{\lambda, K} + o_n(1) +  2 \frac{r-p}{rp} \geq \left( \kappa \frac{r-p}{rp} - o_n(1) \right) \|u_n\|_E.
\end{equation}

On the other hand, since $\lambda_1 < \lambda_2$ implies $J_{\lambda_1, K} \geq J_{\lambda_2, K}$ and the family $\Gamma$ is the same for all $\lambda \in (0,M_K)$, we have $c_{\lambda_2, K} \leq c_{\lambda_1, K}$ and 
\begin{equation} \label{minimax}
    c_{\lambda, K} = \inf_{\gamma \in \Gamma} \max_{t \in [0,1]} J_{\lambda, K} (\gamma(t)) \leq \inf_{\gamma \in \Gamma} \max_{t \in [0,1]} J_{0}(\gamma(t)) < +\infty,
\end{equation}
where
$$
J_{0}(u) = \frac{1}{p}\int_{\mathbb{R}^N}|\nabla u|^p+\frac{1}{q}\int_{\mathbb{R}^N}|\nabla u|^q-\int_{\mathbb{R}^N}\left( \int_{0}^{t} \phi_0(\tau)d\tau \right), \quad u \in E,
$$
and $\phi_0 = \phi_{\lambda, K}$ with $\lambda = 0$, that is,
$$
\phi_{0}(t) =
\begin{cases} 
0 & \text{if } t < 0, \\
t^{r-1} & \text{if } t \geq 0.
\end{cases}
$$
Therefore, from \eqref{limited} and \eqref{minimax} the conclusion follows.
\end{proof}

\begin{lemma} \label{minorando}
    There exists a constant $C_{\lambda, K}>0$ such that for all sufficiently large $n \in \mathbb{N}$, we have
    $$
    \int_{\mathbb{R}^N} |u_n|^r \geq C_{\lambda,K}.
    $$
\end{lemma}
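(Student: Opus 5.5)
We mimic Lemma \ref{minoracao}: test the Palais--Smale sequence against itself with the weight $1/q$, and use that the truncated nonlinearity has growth at most $|t|^{r}$. Let $\{u_n\}$ be the Palais--Smale sequence at level $c_{\lambda,K}$. By Lemma \ref{GPMsuper}, $c_{\lambda,K}\ge \alpha>0$, and by Lemma \ref{ltd}, $\{u_n\}$ is bounded in $E$, so $J'_{\lambda,K}(u_n)[u_n]=o_n(1)$. We then write
\begin{align*}
c_{\lambda,K}+o_n(1)&=J_{\lambda,K}(u_n)-\frac1q J'_{\lambda,K}(u_n)[u_n]\\
&=\frac{q-p}{pq}\int_{\mathbb{R}^N}|\nabla u_n|^p+\int_{\mathbb{R}^N}\Big(\frac1q\phi_{\lambda,K}(u_n)u_n-\Phi_{\lambda,K}(u_n)\Big).
\end{align*}
Since $q<p$, the first term is nonpositive and can be dropped.

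For the second term we use $\Phi_{\lambda,K}\ge 0$ and, from \eqref{des1}, $0\le \phi_{\lambda,K}(t)t\le (1+\lambda K^{s-r})|t|^r$ for all $t$. These give
$$
\alpha+o_n(1)\le c_{\lambda,K}+o_n(1)\le \frac{1+\lambda K^{s-r}}{q}\int_{\mathbb{R}^N}|u_n|^r .
$$
Hence for $n$ large
$$
\int_{\mathbb{R}^N}|u_n|^r\ge \frac{q\,\alpha}{2(1+\lambda K^{s-r})}=:C_{\lambda,K}>0,
$$
which proves the lemma.

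There is no real obstacle here. The one point to check is that the upper bound on $\phi_{\lambda,K}(t)t$ holds in both truncation regimes. For $0\le t\le K$ we have $t^r+\lambda t^s\le (1+\lambda K^{s-r})t^r$, because $t^{s-r}\le K^{s-r}$. For $t>K$ the bound holds with equality by definition. For $t<0$ the left side is zero. Alternatively one could drop $\Phi$ only partially, but the crude bound is enough.
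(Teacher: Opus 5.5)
Your proof is correct and follows the paper's argument: you evaluate $J_{\lambda,K}(u_n)-\frac1q J'_{\lambda,K}(u_n)[u_n]$, drop the nonpositive gradient term, and bound the nonlinear part by a multiple of $(1+\lambda K^{s-r})\int_{\mathbb{R}^N}|u_n|^r$ using \eqref{des1}. The differences are only cosmetic. You discard $-\Phi_{\lambda,K}(u_n)$ using $\Phi_{\lambda,K}\ge 0$, whereas the paper bounds $|\Phi_{\lambda,K}(u_n)|$ and gets the constant $\frac{r+q}{rq}$ instead of $\frac1q$. You also state explicitly the lower bound $c_{\lambda,K}\ge\alpha>0$, which the paper uses only implicitly.
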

\begin{proof}
    Indeed, using the Lemma \ref{ltd}, we have 
    \begin{align*}
        c_{\lambda, K} - o_n(1) &= J_{\lambda, K}(u_n) - \frac{1}{q} J'_{\lambda, K}(u_n)[u_n] =\frac{q-p}{pq}  \int_{\mathbb{R}^N} |\nabla u_n|^{p} + \int_{\mathbb{R}^N} \left(\frac{1}{q} \phi_K(u_n)u_n-\Phi_K(u_n) \right) \\
        &\leq \int_{\mathbb{R}^N} \left(\frac{1}{q} |\phi_K(u_n)u_n| +|\Phi_K(u_n)|\right) \leq \int_{\mathbb{R}^N} \left(\frac{1+\lambda K^{s-r}}{q}|u_n|^r + \frac{1+\lambda K^{s-r}}{r}|u_n|^r\right) \\
        &\leq \left(1+\lambda K^{s-r}\right)\frac{r+q}{rq} \int_{\mathbb{R}^N}|u_n|^{r},
    \end{align*}
    concluding the proof.
\end{proof}

As a consequence of Lemma \ref{minorando}, the sequence $\{u_n\}$ does not converge to zero in $L^r(\mathbb{R}^N)$. The Lions Lemma ensures that
$$
\sup_{y \in \mathbb{R}^N} \int_{B_{1}(y)} |u_n|^t > C_1,
$$
for some constant $C_1=C_1(t, \lambda, K)>0$ and for any $t \in [q^*,p^*)$.
Thus, following the same ideas as the previous subsection, we obtain that the bounded sequence $v_n(x) =u_n(x+y_n)$, where $\{y_n\} \subset \mathbb{R}^N$ is such that 
$$
\int_{B_1(y_n)}|u_n|^t\geq C_1>0,
$$
converges weakly (up to subsequence) to a non-trivial function $v_{\lambda, K} \in E$, and with a similar calculation done previously, it can be concluded that is a weak solution of \eqref{P-truncado}.

\begin{lemma} There are sufficiently large $K_0 > 1$ and sufficiently small $\lambda_{0} = \lambda_0(K_0) > 0$ such that, for any $\lambda \in (0,\lambda_0)$, the weak solution $v_{\lambda, K_0}$ of problem \eqref{P-truncado} satisfies 
$$
\|v_{\lambda, K_0}\|_{\infty} \leq K_0.
$$
As a consequence  and, consequently, $v_{\lambda, K_0}$ is a weak solution of problem \eqref{P}.
\end{lemma}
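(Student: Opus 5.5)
We plan to obtain an $L^\infty$ bound for $v=v_{\lambda,K}$ whose constant depends only on quantities independent of $\lambda$ and $K$, except through the factor $1+\lambda K^{s-r}$. We then choose $K_0$ larger than this bound and $\lambda_0$ small. The starting point is Lemma \ref{ltd}. Since $\|u_n\|_E\le C_0$ with $C_0$ independent of $\lambda,K$, weak lower semicontinuity gives $\|v_{\lambda,K}\|_E\le C_0$. By \eqref{Principal-Embedding}, we also get $\|v\|_{p^*}+\|v\|_{q^*}\le C_1$ uniformly. Moreover $v\ge0$ solves $-\Delta_p v-\Delta_q v=\phi_{\lambda,K}(v)$. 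By \eqref{des1}, and once we restrict to $\lambda<\lambda_0\le K^{-(s-r)}$, we have $0\le \phi_{\lambda,K}(v)\le 2v^{r-1}$. So $v$ is a nonnegative weak subsolution of $-\Delta_pv-\Delta_qv\le a(x)v^{p-1}$ with $a=2v^{r-p}$. Since $r<p^*$, we have $a\in L^{N/p+\varepsilon'}$ on sets where $v$ is large, with norm controlled by $C_1$.

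Next we run a Moser iteration (or De Giorgi truncation). The test functions are $\varphi=v\,v_M^{\,p(\beta-1)}$, with $v_M=\min\{v,M\}$ and $\beta\ge1$; they belong to $E$ because $v\in E$ and the truncation is bounded. The $q$-Laplacian term contributes a nonnegative quantity, $\int|\nabla v|^{q-2}\nabla v\nabla\varphi\ge0$, so we may discard it. We keep only the $p$-part and use the $\mathcal D^{1,p}$ Sobolev inequality. This yields
$$\Big(\int (v\,v_M^{\beta-1})^{p^*}\Big)^{p/p^*}\le C\beta^{p}\int v^{r-p}(v\,v_M^{\beta-1})^{p}.$$
To handle the right-hand side, we split $\{v\le R\}\cup\{v>R\}$. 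On $\{v>R\}$, Hölder's inequality with $\|v\|_{L^{p^*}(\{v>R\})}$ small absorbs the term into the left-hand side. Here we must choose $R$ uniformly; this needs equi-integrability of $v^{p^*}$ over the family, which is not automatic. A cleaner alternative is the Brezis–Kato style bound $\int_{\{v>R\}} v^{r-p\cdot N/p}\cdot\ldots$, using that $r<p^*$ gives an exponent strictly below $p^*$, interpolated with $L^{q^*}$. Concretely,
$$\|v^{r-p}\|_{L^{N/p}(\{v>R\})}\le R^{-(p^*-r)p/N}\|v\|_{p^*}^{p^*p/N}\le C_1^{p^*p/N}R^{-(p^*-r)p/N},$$
which is small uniformly once $R$ is large. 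On $\{v\le R\}$ the term is bounded by $R^{r-p}\int (v\,v_M^{\beta-1})^p$. This requires $v\in L^{p\beta}$, which we get from iterating with starting exponent $p^*$, through interpolation between $L^{q^*}$ and $L^{p^*}$ with $p\le q^*$ or a local version. Iterating $\beta_{k+1}=\beta_k p^*/p$ gives $\|v\|_\infty\le C_2$, where $C_2$ depends only on $N,p,q,r,C_0$.

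The main obstacle is exactly this uniformity. The iteration must be organized so that the constants see $v$ only through $\|v\|_E\le C_0$, which is independent of $\lambda,K$. If global integrability at low exponents is problematic, we would instead work locally. We would apply the local sup estimate $\sup_{B_1(y)}v\le C\|v\|_{L^{p^*}(B_2(y))}$, valid for subsolutions with potential in $L^{N/p+\varepsilon}$ (Serrin/Trudinger type, with the $q$-term handled as above). Its constant is uniform in $y$ and bounded by a quantity depending only on $C_1$, via the smallness of $\|v^{r-p}\|_{L^{N/p}}$ on small balls or large level sets.

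Finally, we set $K_0:=\max\{2,C_2+1\}$ and $\lambda_0:=\min\{M_{K_0},K_0^{-(s-r)}\}$. Then for $\lambda\in(0,\lambda_0)$ we get $\|v_{\lambda,K_0}\|_\infty\le C_2\le K_0$. Hence $\phi_{\lambda,K_0}(v)=v^{r-1}+\lambda v^{s-1}$. Since $v\in L^\infty\cap L^{q^*}$, we have $v^{s-1}\varphi\in L^1$ for $\varphi\in E$, so \eqref{weak-truncado} becomes \eqref{idweak}. Therefore $v_{\lambda,K_0}$ is a weak solution of \eqref{P}.
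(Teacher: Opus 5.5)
Your proposal is correct in its main line and shares the paper's skeleton: the test function $v\,v_M^{p(\beta-1)}$, discarding the nonnegative $q$-term, the $\mathcal D^{1,p}$ Sobolev inequality, an iteration starting from $L^{p^*}$, and constants that depend on $v$ only through the uniform bound on $\|v\|_{p^*}$ inherited from Lemma \ref{ltd}.

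The real difference is how you treat the weight $v^{r-p}$. The paper never splits. Since $r<p^*$, one has $v^{r-p}\in L^{p^*/(r-p)}(\mathbb R^N)$ globally with $p^*/(r-p)>N/p$, so H\"older gives directly
\[
\int_{\mathbb R^N} v^{r-p}w_L^p\le \|v\|_{p^*}^{r-p}\|w_L\|_{\alpha^*}^p,\qquad \alpha^*=\frac{pp^*}{p^*-(r-p)}<p^*.
\]
The fixed gain $p^*/\alpha^*>1$ then drives the iteration. No absorption and no equi-integrability are needed, so your worry on that point does not arise in this subcritical setting.

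Your Brezis--Kato splitting also exploits $r<p^*$; the correct level-set bound is $\|v^{r-p}\|_{L^{N/p}(\{v>R\})}\le R^{-(p^*-r)}\|v\|_{p^*}^{p^*-p}$. It is heavier, though, and one point must be made explicit. Absorbing requires $C\beta^p\|v^{r-p}\|_{L^{N/p}(\{v>R\})}\le \tfrac12$, so $R$ cannot be fixed once and for all. You must take $R=R_\beta\sim\beta^{p/(p^*-r)}$, and then the factor $R_\beta^{r-p}$ from $\{v\le R_\beta\}$ grows polynomially in $\beta$. This is harmless along $\beta_k=(p^*/p)^k$, but ``small uniformly once $R$ is large'' is not enough as written.

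The local Serrin-type fallback is unnecessary. It would also not be ``handled as above'': with a cutoff $\eta$, the $q$-term acquires a sign-indefinite piece involving $\nabla\eta$.

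On the other hand, your final bookkeeping is cleaner than the paper's. Imposing $1+\lambda K^{s-r}\le 2$ from the outset (already implied by $\lambda<M_K=K^{-s}$ when $K>1$) makes the $L^\infty$ bound independent of $K$, so $K_0=C_2+1$ works at once. You also verify explicitly that $\phi_{\lambda,K_0}(v)=v^{r-1}+\lambda v^{s-1}$ turns the truncated weak formulation into \eqref{idweak}, which the paper leaves implicit.
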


\begin{proof}
Let $L > 0$ and $v_L = \min\{v_{\lambda, K}, L\} \geq 0$. We take $\beta > 1$ and $w_L = v_{\lambda, K} v_L^{\beta -1}$. The function
$$
\varphi := v_{\lambda, K}v_L^{p(\beta - 1)} 
$$
is in $E$, since 
\begin{align*}
    \int_{\mathbb{R}^N} |\nabla \varphi|^p &= \int_{\mathbb{R}^N} \left|\left(1+p(\beta -1) \chi_{\{v_{\lambda, K} < L\}}\right)v_L^{p(\beta -1)}\nabla v_{\lambda, K} \right|^p\\ 
    &\leq |1+p(\beta-1)|^p L^{p^2(\beta-1)} \int_{\mathbb{R}^N} |\nabla v_{\lambda, K}|^p \\ 
    &< +\infty
\end{align*}
and, similarly,
\begin{align*}
    \int_{\mathbb{R}^N} |\nabla \varphi|^q \leq |1+p(\beta-1)|^q L^{pq(\beta-1)} \int_{\mathbb{R}^N} |\nabla v_{\lambda, K}|^q < +\infty.
\end{align*}
Thus, 
\begin{align*}
     \int_{\mathbb{R}^N} \phi_{\lambda, K}(v_{\lambda, K})\varphi &= \int_{\mathbb{R}^N}\left(|\nabla v_{\lambda, K}|^{p-2}\nabla v_{\lambda, K}+|\nabla v_{\lambda, K}|^{q-2}\nabla v_{\lambda, K} \right)\nabla\varphi \\
     &= \int_{\mathbb{R}^N}\left(|\nabla v_{\lambda, K}|^{p-2}\nabla v_{\lambda, K}+|\nabla v_{\lambda, K}|^{q-2}\nabla v_{\lambda, K} \right)\left(v_L^{p(\beta-1)} \nabla v_{\lambda, K} + v_{\lambda, K} \nabla \left(v_L^{p(\beta - 1)} \right)\right) \\
     &= \int_{\mathbb{R}^N} v_L^{p(\beta - 1)}\left(|\nabla v_{\lambda, K}|^{p}+|\nabla v_{\lambda, K}|^{q} \right) + \\
     &+p(\beta - 1)\int_{\mathbb{R}^N} \chi_{\{v_{\lambda, K} < L\}} v_L^{p(\beta - 1)}\left(|\nabla v_{\lambda, K}|^{p}+|\nabla v_{\lambda, K}|^{q} \right) \\
     & \geq \int_{\mathbb{R}^N} v_L^{p(\beta - 1)}\left(|\nabla v_{\lambda, K}|^{p}+|\nabla v_{\lambda, K}|^{q} \right) \\
     &\geq \int_{\mathbb{R}^N} v_L^{p(\beta - 1)}|\nabla v_{\lambda, K}|^{p}.
\end{align*}
On the other hand,
\begin{align*}
    \int_{\mathbb{R}^N} \phi_{\lambda, K}(v_{\lambda, K})\varphi &= \int_{\mathbb{R}^N} \phi_{\lambda, K}(v_{\lambda, K})v_{\lambda, K} v_L^{p(\beta-1)} \leq C_{\lambda, K}\int_{\mathbb{R}^N} v_{\lambda, K}^rv_L^{p(\beta -1)}, 
\end{align*}
where $C_{\lambda, K} := 1 + \lambda K^{s-r} > 0$. Therefore,
\begin{equation*}
    \int_{\mathbb{R}^N} v_L^{p(\beta - 1)}|\nabla v_{\lambda, K}|^{p} \leq C_{\lambda, K}\int_{\mathbb{R}^N} v_{\lambda, K}^rv_L^{p(\beta -1)}. 
\end{equation*}
Now, note that 
\begin{align*}
    \|w_L\|_{p^*}^{p} &\leq S_{*}^{-1} \int_{\mathbb{R}^N} |\nabla w_L|^p = S_{*}^{-1} \int_{\mathbb{R}^N} \left|\left(v_L^{\beta-1} \nabla v_{\lambda, K}\right)+\left((\beta-1)v_{\lambda, K}v_L^{\beta-2} \nabla v_L\right)\right|^p \\
    &\leq  S_{*}^{-1} 2^{p-1}\int_{\mathbb{R}^N}\left(v_L^{p(\beta-1)}|\nabla v_{\lambda, K}|^p + (\beta -1)^p v_L^{p(\beta-1)}\chi_{\{v_{\lambda, K} < L\}}|\nabla v_{\lambda, K}|^p \right) \\
    &=S_{*}^{-1} 2^{p-1}\int_{\mathbb{R}^N}\left(1 + (\beta-1)^p \chi_{\{v_{\lambda, K}<L\}}^{p}\right)v_L^{p(\beta-1)}|\nabla v_{\lambda, K}|^{p} \\
    &\leq S_{*}^{-1} 2^{p-1}\left(1+(\beta-1)^p\right) \int_{\mathbb{R}^N} v_L^{p(\beta-1)}|\nabla v_{\lambda, K}|^{p} \\
    &\leq S_{*}^{-1} 2^{p-1}\left(1+(\beta-1)^p\right) C_{\lambda, K}\int_{\mathbb{R}^N} v_{\lambda, K}^rv_L^{p(\beta -1)} \\
    &= S_{*}^{-1} 2^{p-1}\left(\frac{1}{\beta^p}+\left(1-\frac{1}{\beta}\right)^p\right)\beta^{p} C_{\lambda, K}\int_{\mathbb{R}^N} v_{\lambda, K}^rv_L^{p(\beta -1)} \\
    &\leq  S_{*}^{-1} 2^{p} C_{\lambda, K}\beta^p\int_{\mathbb{R}^N} v_{\lambda, K}^{r-p}(v_{\lambda, K}v_L^{(\beta -1)})^p \\
    &=C_1C_{\lambda, K}\beta^p\int_{\mathbb{R}^N} v_{\lambda, K}^{r-p}w_L^p,
\end{align*}
where $C_1 := S_{*}^{-1} 2^{p}$. Using the H\"older inequality, in view of Lemma \ref{ltd}, the convergence $v_n \rightharpoonup v_{\lambda, K}$ in $E$ and the embedding $E \hookrightarrow L^{p^*}(\mathbb{R}^N)$, we obtain 
\begin{align*}
    \|w_L\|_{p^*}^{p} &\leq C_1 C_{\lambda, K}\beta^p \left(\int_{\mathbb{R}^N}v_{\lambda, K}^{p^*} \right)^{\frac{r-p}{p^*}}\left(\int_{\mathbb{R}^N} w_L^{\alpha^*} \right)^{\frac{p}{\alpha*}} \\
    &= C_1 C_{\lambda, K}\beta^p\|v_{\lambda, K}\|_{p^*}^{r-p}\|w_L\|_{\alpha^*}^{p} \\
    &\leq C_2 C_{\lambda, K}\beta^p\|w_L\|_{\alpha^*}^{p}
\end{align*}
where 
$$
\alpha^* := \frac{pp^*}{p^*-(r-p)}
$$
and $C_2 := C_1C^{r-p}$ 
which does not depends on $\lambda$ and $K$.
If $v_{\lambda, K}^{\beta} \in L^{\alpha^*}(\mathbb{R}^N)$, then 
\begin{align*}
    \|w_L\|_{p^*}^{p} &\leq C_2 C_{\lambda,K}\beta^p \left(\int_{\mathbb{R}^N}(v_{\lambda, K}v_L^{\beta-1})^{\alpha^*} \right)^{\frac{p}{\alpha^*}} \\
    &\leq C_2 C_{\lambda,K}\beta^p \left(\int_{\mathbb{R}^N}v_{\lambda, K}^{\beta\alpha^*} \right)^{\frac{p}{\alpha^*}} \\
    &=C_2 C_{\lambda,K}\beta^p \|v_{\lambda, K}\|_{\beta\alpha^*}^{p\beta} \\
    &< +\infty.
\end{align*}
Applying Fatou's Lemma and taking the limit as $L \to \infty$, we obtain
\begin{align} \label{betagenerico}
   \nonumber \|v_{\lambda, K}\|_{p^*\beta} &= \left(\int_{\mathbb{R}^N} \liminf_{L \to \infty}{|v_{\lambda, K}v_L^{\beta-1}|^{p^*}} \right)^{\frac{1}{p^*\beta}} \leq \liminf_{L \to \infty}{\left(\int_{\mathbb{R}^N}|w_L|^{p^*}\right)^{\frac{1}{p^*\beta}}} = \liminf_{L \to \infty} (\|w_L\|_{p^*}^{p})^{\frac{1}{p\beta}} \\
    &\leq (C_2 C_{\lambda,K})^{\frac{1}{p\beta}}\beta^{\frac{1}{\beta}} \|v_{\lambda, K}\|_{\beta\alpha^*},
\end{align}
if $v_{\lambda, K}^{\beta \alpha^*} \in L^{1}(\mathbb{R}^N)$.

Choosing 
$$
\beta = p^{*} / \alpha^{*} = \frac{p^*-(r-p)}{p} = 1 + \frac{p^*-r}{p} > 1,
$$
by \eqref{betagenerico} we find that
\begin{align} \label{beta1}
    \|v_{\lambda, K}\|_{p^*\beta} \leq (C_2 C_{\lambda,K})^{\frac{1}{p\beta}}\beta^{\frac{1}{\beta}} \|v_{\lambda, K}\|_{p^*}.
\end{align}
In particular, $v_{\lambda, K}^{p^* \beta} \in L^1(\mathbb{R}^N)$. Thus, replacing $\beta$ by $\beta^2 > 1$ in \eqref{betagenerico}, observing $\beta^2 \alpha^* = p^* \beta $ and applying \eqref{beta1}, 
\begin{align*}
    \|v_{\lambda, K}\|_{\beta^2p^*} &\leq (C_2 C_{\lambda,K})^{\frac{1}{p\beta^2}}\beta^{\frac{2}{\beta^2}} \|v_{\lambda, K}\|_{p^* \beta} \leq (C_2 C_{\lambda,K})^{\frac{1}{p}\left(\frac{1}{\beta}+\frac{1}{\beta^2}\right)}\beta^{\frac{1}{\beta}+\frac{2}{\beta^2}} \|v_{\lambda, K}\|_{p^*}.
\end{align*}
Continuing the inductive process, we see that 
\begin{align*}
    \|v_{\lambda, K}\|_{\beta^mp^*} \leq (C_2 C_{\lambda,K})^{\frac{1}{p}\left(\sum_{j=1}^{m}\frac{1}{\beta^m}\right)}\beta^{\sum_{j=1}^{m}\frac{j}{\beta^j}} \|v_{\lambda, K}\|_{p^*},
\end{align*}
for any $m \in \mathbb{N}$. Taking $m \to \infty$, we have
\begin{align*}
    \|v_{\lambda, K}\|_{\infty} \leq (C_2 C_{\lambda,K})^{\gamma_1}\beta^{\gamma_2} \|v_{\lambda, K}\|_{p^*} \leq (C_2 C_{\lambda,K})^{\gamma_1}\beta^{\gamma_2} C_3,
\end{align*}
where 
\begin{equation*}
    \|v_{\lambda, K}\|_{p^*} \leq C_3, \quad \gamma_1 := \frac{1}{p}\sum_{j=1}^{\infty} \frac{1}{\beta^j} < \infty \quad \text{and} \quad \gamma_2 := \sum_{j=1}^{\infty} \frac{j}{\beta^j} < \infty.
\end{equation*}
Finally, we need that 
$$
(C_2 C_{\lambda,K})^{\gamma_1}\beta^{\gamma_2} C_3 \leq K,
$$
that is 
$$
(1+\lambda K^{s-r})^{\gamma_1} \leq \frac{K}{C_2^{\gamma_1}\beta^{\gamma_2}C_3}
$$
or, equivalently, 
$$
\lambda \leq \left(\frac{K^\frac{1}{\gamma_1}}{C_2\beta^{\frac{\gamma_1}{\gamma_2}}C_3^{\frac{1}{\gamma_1}}}-1 \right)\frac{1}{K^{s-r}}.
$$
Fix a sufficiently large $K_0>1$ so that
$$
\frac{K_0^\frac{1}{\gamma_1}}{C_2\beta^{\frac{\gamma_1}{\gamma_2}}C_3^{\frac{1}{\gamma_1}}} > 1
$$
and a sufficiently small $\lambda_0 > 0$ such that 
$$
\lambda_0 \leq \left(\frac{K_0^\frac{1}{\gamma_1}}{C_2\beta^{\frac{\gamma_1}{\gamma_2}}C_3^{\frac{1}{\gamma_1}}}-1 \right)\frac{1}{K_0^{s-r}} \quad \text{and} \quad \lambda_0 \leq M_{K_0}.
$$
Thus, for each $0 < \lambda < \lambda_0$, we get $\|v_{\lambda, K_0}\|_{\infty} \leq K_0$ and this completes the proof.
\end{proof}

\subsection{Proof of Theorem~\ref{T2} (\ref{rmaior})}
We start by showing a Mountain Pass Geometry.
\begin{lemma}
    If $q^* < r,s < p^*$ and $r>\max\{s,p\}$, for any $\lambda<0$, the $I_{\lambda}$ satisfies Mountain Pass Geometry.
\end{lemma}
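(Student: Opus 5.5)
We verify the two conditions of Lemma~\ref{MP} for $I_\lambda$ when $\lambda<0$ and $q^*<r,s<p^*$ with $r>\max\{s,p\}$. The key observation is that for $\lambda<0$ the $s$-term enters with a favourable sign:
$$-\frac{\lambda}{s}\int_{\mathbb{R}^N}u_+^s=\frac{|\lambda|}{s}\int_{\mathbb{R}^N}u_+^s\ge 0 .$$
For the behaviour near the origin we simply drop this term. For $\|u\|_E=\rho<1$, inequality \eqref{eq:estimativaE} and the embedding \eqref{Principal-Embedding} give
$$
I_\lambda(u)\ge \frac1p\|u\|_E^p-\frac{C_r}{r}\|u\|_E^r=\rho^p\Big(\frac1p-\frac{C_r}{r}\rho^{r-p}\Big).
$$
Since $r>p$, we choose $\rho\in(0,1)$ so small that the bracket exceeds $\frac1{2p}$. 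Setting $\alpha:=\rho^p/(2p)$, we obtain $\inf_{\|u\|_E=\rho}I_\lambda\ge\alpha>0=I_\lambda(0)$. These $\rho$ and $\alpha$ do not even depend on $\lambda<0$.

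For the behaviour at infinity, fix $\varphi\in C_0^\infty(\mathbb{R}^N)$ with $\varphi\ge0$, $\varphi\not\equiv0$, and write
$$
I_\lambda(t\varphi)=\frac{t^p}{p}\int_{\mathbb{R}^N}|\nabla\varphi|^p+\frac{t^q}{q}\int_{\mathbb{R}^N}|\nabla\varphi|^q-\frac{t^r}{r}\int_{\mathbb{R}^N}\varphi^r+\frac{|\lambda|t^s}{s}\int_{\mathbb{R}^N}\varphi^s .
$$
This is the one place where the hypothesis $r>\max\{s,p\}$ is used, and with it the step is direct. All the integrals are finite and positive, and since $r>\max\{p,q,s\}$ the negative term $-\frac{t^r}{r}\int\varphi^r$ dominates. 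Factoring out $t^r$ shows $I_\lambda(t\varphi)\to-\infty$ as $t\to+\infty$. Hence there is $t_0$ with $\|t_0\varphi\|_E>\rho$ and $I_\lambda(t_0\varphi)<0$, and we take $e:=t_0\varphi$. Here $e$ depends on $\lambda$, which is allowed since $\lambda<0$ is fixed.

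There is no real obstacle in this argument. The only point needing care is that, unlike in Lemma~\ref{GPM-positivo}, the positive term $\frac{|\lambda|}{s}\int\varphi^s$ could compete at infinity. This is harmless precisely because $s<r$, so it is absorbed by the $t^r$ term. Combining the two steps gives the Mountain Pass Geometry.
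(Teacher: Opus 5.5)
Your proof is correct and follows essentially the same route as the paper: drop the nonnegative term $-\frac{\lambda}{s}\int u_+^s$ to get $I_\lambda(u)\ge \rho^p\bigl(\frac1p-\frac{C_r}{r}\rho^{r-p}\bigr)$ on a small sphere, then factor out $t^r$ in $I_\lambda(t\varphi)$ and use $r>\max\{p,q,s\}$. The differences are only cosmetic: you take $\alpha=\rho^p/(2p)$, and you use a nonnegative $\varphi\in C_0^\infty(\mathbb{R}^N)$ where the paper allows any $\varphi\in E$ with $\varphi_+\neq 0$.
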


\begin{proof}
    Indeed, for $\|u\|_E = \rho < 1$, we have 
    $$
    I_{\lambda}(u) \geq \frac{1}{p} \|u\|_E^p - \frac{1}{r} \int_{\mathbb{R}^N} |u|^{r} \geq  \frac{1}{p} \|u\|_E^p - \frac{C_r}{r} \|u\|_E^r = \rho^p\left(\frac{1}{p} - \frac{C_r}{r}\rho^{r-p} \right) := \alpha.
    $$
    Take 
    $$
    0< \rho < \min\left\{1, \left(\frac{r}{pC_r}\right)^{1/(r-p)} \right\} 
    $$
    and, consequently, $\alpha > 0$.
    
    On the other hand, for $\varphi \in E\setminus\{0\}$ such that $\varphi_{+} \neq 0$ in $E$ and all $t>0$,
    $$
    I_{\lambda}(t\varphi) = t^r \left(\frac{t^{p-r}}{p}\int_{\mathbb{R}^N}|\nabla \varphi|^p+\frac{t^{q-r}}{q}\int_{\mathbb{R}^N}|\nabla \varphi|^q-\frac{1}{r}\int_{\mathbb{R}^N} \varphi_{+}^r-\frac{\lambda t^{s-r}}{s}\int_{\mathbb{R}^N}\varphi_{+}^s\right) \overset{t \to +\infty}{\to} -\infty
    $$
    and the proof is complete.
\end{proof}

As usual the lemma above implies that there exists a sequence $\{u_n\} \subset E$ such that 
$$
I_{\lambda}(u_n) = c_{\lambda} + o_n(1) \quad \text{and} \quad \|I'_{\lambda}(u_n)\|=o_n(1),
$$
where $c_{\lambda}$ is the minimax level.

\begin{lemma} \label{ltd3}
    The sequence $\{u_n\}$ is bounded.
\end{lemma}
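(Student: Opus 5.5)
We argue exactly as in Lemma \ref{ltd1}, but now subtract $\frac{1}{r}I'_\lambda(u_n)[u_n]$, since $r$ is the largest exponent and $\lambda<0$. We use the Palais--Smale relations $I_\lambda(u_n)=c_\lambda+o_n(1)$ and $|I'_\lambda(u_n)[u_n]|\le o_n(1)\|u_n\|_E$, and compute
\begin{align*}
c_\lambda+o_n(1)+o_n(1)\|u_n\|_E &\ge I_\lambda(u_n)-\frac1r I'_\lambda(u_n)[u_n]\\
&=\frac{r-p}{pr}\int_{\mathbb R^N}|\nabla u_n|^p+\frac{r-q}{qr}\int_{\mathbb R^N}|\nabla u_n|^q-\lambda\,\frac{r-s}{rs}\int_{\mathbb R^N}u_{n+}^s .
\end{align*}
The $r$-terms cancel exactly. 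The last term is nonnegative because $\lambda<0$ and $r>s$, so it can be dropped. Since $q<p<r$, we have $\frac{r-q}{qr}\ge\frac{r-p}{pr}$. Therefore the right-hand side is at least $\frac{r-p}{pr}\big(\|\nabla u_n\|_p^p+\|\nabla u_n\|_q^q\big)$.

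We then apply \eqref{eq:usefull}, which gives $\|\nabla u_n\|_p^p+\|\nabla u_n\|_q^q\ge \kappa\|u_n\|_E-2$. This yields
$$\Big(\kappa\frac{r-p}{pr}-o_n(1)\Big)\|u_n\|_E\le c_\lambda+2\frac{r-p}{pr}+o_n(1).$$
Since $r>p$, the coefficient on the left is bounded away from zero for $n$ large, and boundedness of $\{u_n\}$ in $E$ follows.

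No step here is a genuine obstacle; the only point to check is the choice of the multiplier $1/r$. A multiplier $1/d$ with $d=\min\{r,s\}$, as in Lemma \ref{ltd1}, would leave a term $\lambda\frac{s-d}{ds}\int u_{n+}^s$ with the wrong sign when $\lambda<0$. With $1/r$, the sign of $\lambda$ and the ordering $s<r$ combine to give the favorable sign. This is exactly where the hypothesis $r>\max\{s,p\}$ enters: $r>s$ controls the sign of the $s$-term, and $r>p$ gives coercivity of the gradient terms.
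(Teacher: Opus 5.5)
Your proof is correct and follows the paper's argument: subtract $\frac1r I'_\lambda(u_n)[u_n]$, drop the nonnegative term $\lambda\frac{s-r}{rs}\int_{\mathbb R^N}u_{n+}^s$, and conclude with \eqref{eq:usefull}. One correction to your closing remark: since $s<r$, the multiplier $1/d$ with $d=\min\{r,s\}=s$ makes the $s$-term vanish rather than giving it the wrong sign, and the real obstruction is that Theorem~\ref{T2}~(\ref{rmaior}) allows $s\le p$, so the gradient coefficient $\frac{s-p}{sp}$ need not be positive.
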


\begin{proof}
    Indeed, 
    \begin{align*}
        c_{\lambda} + o_n(1) + o_n(1) \|u_n\|_E &\geq I_{\lambda}(u_n) - \frac{1}{r} I'_{\lambda}(u_n)(u_n) \\
        &=\frac{r-p}{pr}\int_{\mathbb{R}^N}|\nabla u_n|^p+ \frac{r-q}{qr}\int_{\mathbb{R}^N}|\nabla u_n|^q+ \lambda \frac{s-r}{rs}\int_{\mathbb{R}^N}u_{n+}^s \\
        &\geq \frac{r-p}{pr}\int_{\mathbb{R}^N}|\nabla u_n|^p+\frac{r-q}{rq}\int_{\mathbb{R}^N}|\nabla u_n|^q\\
        &\geq \frac{r-p}{rp}\kappa\|u_n\|_E - 2\frac{r-p}{rp}.
    \end{align*}
    Therefore, 
    \begin{equation*}
        c_{\lambda} + o_n(1) + 2\frac{r-p}{rp} \geq \left(\frac{r-p}{rp} \kappa - o_n(1) \right) \|u_n\|_E
    \end{equation*}
    and, consequently, the result follows.
\end{proof}

\begin{lemma}
    There exists $C_{\lambda}>0$ such that for all sufficiently large $n \in \mathbb{N}$, we have
    $$
    \int_{\mathbb{R}^N} |u_n|^r \geq C_{\lambda}.
    $$
\end{lemma}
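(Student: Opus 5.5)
We follow the pattern of Lemma \ref{minoracao} and Lemma \ref{minorando}: subtract $\frac1q I'_\lambda(u_n)[u_n]$ from $I_\lambda(u_n)$ and exploit the sign of $\lambda<0$ to discard the $s$-term. By Lemma \ref{ltd3} the sequence $\{u_n\}$ is bounded in $E$. Hence $\|I'_\lambda(u_n)\|=o_n(1)$ gives $I'_\lambda(u_n)[u_n]=o_n(1)$. Therefore
\begin{align*}
c_\lambda+o_n(1)&=I_\lambda(u_n)-\frac1q I'_\lambda(u_n)[u_n]\\
&=\frac{q-p}{pq}\int_{\mathbb R^N}|\nabla u_n|^p+\frac{r-q}{qr}\int_{\mathbb R^N}u_{n+}^r+\lambda\frac{s-q}{qs}\int_{\mathbb R^N}u_{n+}^s .
\end{align*}

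Next we drop the nonpositive terms. Since $q<p$, the gradient term is nonpositive. Since $\lambda<0$ and $s>q^*>q$, the last term is also nonpositive. This leaves
$$
c_\lambda+o_n(1)\le \frac{r-q}{qr}\int_{\mathbb R^N}u_{n+}^r\le \frac{r-q}{qr}\int_{\mathbb R^N}|u_n|^r .
$$

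Finally we need $c_\lambda>0$. By the Mountain Pass Geometry lemma just proved, every path in $\Gamma$ crosses the sphere $\|u\|_E=\rho$, so $c_\lambda\ge\alpha>0$. For $n$ large we then have $c_\lambda+o_n(1)\ge c_\lambda/2$. This yields the claim with
$$
C_\lambda:=\frac{qr\,c_\lambda}{2(r-q)} .
$$

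There is no real obstacle here. The only points to check are that boundedness (Lemma \ref{ltd3}) makes the error term $o_n(1)\|u_n\|_E$ vanish, and that $c_\lambda>0$. Unlike the case $\lambda>0$, the sign of $\lambda$ removes the $s$-term entirely, so only the $L^r$ norm appears in the lower bound.
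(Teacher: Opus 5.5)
Your proposal is correct and matches the paper's proof: subtract $\frac1q I'_\lambda(u_n)[u_n]$ from $I_\lambda(u_n)$, discard the gradient term (since $q<p$) and the $s$-term (since $\lambda<0$ and $s>q$), and bound what remains by $\frac{r-q}{qr}\int_{\mathbb{R}^N}|u_n|^r$. You also state explicitly two facts the paper uses without comment: boundedness from Lemma \ref{ltd3}, which makes $I'_\lambda(u_n)[u_n]=o_n(1)$, and $c_\lambda\ge\alpha>0$ from the mountain pass geometry.
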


\begin{proof}
Indeed, using that $q<p,r,s$ and $\lambda\leq 0$ we get 
    \begin{align*}
        c_\lambda+o_n(1)&=I_\lambda(u_n)-\frac{1}{q}I'_\lambda(u_n)u_n \\
&=\frac{q-p}{pq}\int_{\mathbb{R}^N}|\nabla u_n|^p-\frac{q-r}{rq}\int_{\mathbb{R}^N}u_{n+}^r-\lambda \frac{q-s}{sq}\int_{\mathbb{R}^N}u_{n+}^s\\
&\leq \frac{r-q}{qr}\int_{\mathbb{R}^N}|u_n|^r,
    \end{align*}
    proving the lemma.
\end{proof}

By the previous lemma, it follows that $u_n \not\to 0$ in $L^{r}(\mathbb{R}^N)$. Moreover, by Lions Lemma, Lemma \ref{ltd3} and arguing as before, there exist $\{v_n\} \subset E$ and $v \in E\setminus\{0\}$ such that $v_n \rightharpoonup v\ge 0$ in $E$, and $ v$ is a solution of the problem.

\subsection{Proof of Theorem~\ref{T2} (\ref{rnomeio})}

Also in this case we have a Mountain Pass Geometry, at least for small $\lambda$.
\begin{lemma} \label{GPM-negativo}
    If $p<r<s$, there are $\alpha_0, \rho_0 > 0, e \in E$  and $\lambda_1<0$ such that 
    \begin{equation*}
        \|e\|_E > \rho_0 \quad \text{and} \quad \inf_{\|u\|_E = \rho_0} J_{\lambda,L}(u) \geq \alpha_0 > J_{\lambda,L}(e),
    \end{equation*}
    for all $\lambda \in (\lambda_1,0)$ and $L>0$.
\end{lemma}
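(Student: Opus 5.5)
The idea is to compare $J_{\lambda,L}$ with the $\lambda$-free functional $I_0$ already introduced in the proof of Theorem~\ref{T1}~(\ref{pnomeio}). For $\lambda<0$ the truncated term enters with a favourable sign on the small sphere, since $-\lambda H_L(u)=|\lambda|H_L(u)\ge 0$. It also enters with a controlled, $L$-independent size everywhere, since $0\le \Psi_L\le 1$. So I would bound $J_{\lambda,L}$ from below by $I_0$ near the origin and from above by $I_0$ plus a small perturbation at a fixed far point $e$.

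\textbf{Step 1 (the sphere).} For every $u\in E$, every $L>0$ and every $\lambda<0$ we have
$$
J_{\lambda,L}(u)=I_0(u)+|\lambda|\,\frac{1}{s}\,\Psi_L(u)\int_{\mathbb{R}^N}u_+^s\ \ge\ I_0(u).
$$
If $\|u\|_E=\rho<1$, then \eqref{eq:estimativaE} and the embedding \eqref{Principal-Embedding} into $L^r(\mathbb{R}^N)$ give
$$
I_0(u)\ge \rho^p\Big(\frac1p-\frac{C_r}{r}\rho^{r-p}\Big).
$$
Because $r>p$, I fix $\rho_0\in(0,1)$ with $\frac{C_r}{r}\rho_0^{r-p}<\frac1{2p}$ and set $\alpha_0:=\rho_0^p/(2p)$. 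This gives $\inf_{\|u\|_E=\rho_0}J_{\lambda,L}\ge\alpha_0$, uniformly in $L>0$ and $\lambda<0$, with no restriction on $\lambda$ at this stage.

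\textbf{Step 2 (the point $e$).} I fix $\varphi\in C_0^\infty(\mathbb{R}^N)$ with $\varphi\ge0$, $\varphi\not\equiv0$. Since $r>p>q$, we have $I_0(t\varphi)\to-\infty$ as $t\to+\infty$. So I choose $t_0$ with $\|t_0\varphi\|_E>\rho_0$ and $I_0(t_0\varphi)\le -1$, and set $e:=t_0\varphi$; note that $e$ does not depend on $\lambda$ or $L$. Using $\Psi_L\le 1$,
$$
J_{\lambda,L}(e)\le I_0(e)+\frac{|\lambda|}{s}\int_{\mathbb{R}^N}e^s\le -1+\frac{|\lambda|}{s}\int_{\mathbb{R}^N}e^s .
$$
The integral is finite because $e\in C_0^\infty$; here $s>r$ is only relevant through finiteness. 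I define $\lambda_1:=-\,s\big(\int_{\mathbb{R}^N}e^s\big)^{-1}<0$. Then for $\lambda\in(\lambda_1,0)$ we get $J_{\lambda,L}(e)<0<\alpha_0$ for every $L>0$.

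The same estimates hold verbatim for $I_\lambda$, because $I_\lambda(u)=I_0(u)+\frac{|\lambda|}{s}\int u_+^s$ is the case $\Psi_L\equiv1$. Consequently, the minimax classes for $I_0$, $I_\lambda$ and $J_{\lambda,L}$ coincide, as in \eqref{minimaxs1}. There is no genuine obstacle in this proof. The only point needing care is that $e$ must be chosen before $\lambda_1$, and independently of $L$, so that $\lambda_1$ depends only on $e$; this is exactly what makes the conclusion uniform in $L>0$.
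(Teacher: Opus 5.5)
Your proof is correct and follows the paper's own argument. You use the sandwich $I_0\le J_{\lambda,L}\le I_\lambda$ for $\lambda<0$, take the sphere estimate from $I_0$, fix $e$ with $I_0(e)<0$ independently of $\lambda$ and $L$, and then make $I_\lambda(e)<0$ by taking $\lambda$ close to $0$. The only difference is that you give $\lambda_1=-s\big(\int_{\mathbb{R}^N}e^s\big)^{-1}$ explicitly, whereas the paper obtains $\lambda_1$ from $I_\lambda(e)\to I_0(e)$ as $\lambda\to0$.
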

\begin{proof}
    Indeed, if $\|u\|_E  < 1$, then 
    \begin{equation*}
        I_{\lambda}(u) \geq J_{\lambda, L}(u) \geq I_{0}(u) \geq \frac{1}{p}\|u\|_E^{p} - \frac{C_r}{r}\|u\|_E^{r} 
    \end{equation*}
   from which the conclusion follows for suitable $\rho_0\in(0,1)$ and $\alpha_0>0$.
   
    Furthermore, for a  fixed $\varphi \in E\setminus\{0\}$ such that $\varphi\ge 0 $, we have
    \begin{equation*}
        I_{0}(tu_0) = \frac{t^p}{p} \int_{\mathbb{R}^N} |\nabla \varphi|^p + \frac{t^q}{q} \int_{\mathbb{R}^N} |\nabla \varphi|^q - \frac{t^r}{r} \int_{\mathbb{R}^N} \varphi^r \to -\infty
    \end{equation*}
    as $t \to + \infty$,
    implying that  there is $e\in E$ with $\| e\|>\rho_0$
    and $I_0(e)<0$.
 Since
    \begin{equation*}
        \left|I_{\lambda}(e)-I_{0}(e) \right| = -\frac{\lambda}{s} \int_{\mathbb{R}^N}e_{+}^s \to 0
        \quad \text{as }\lambda \to 0,
    \end{equation*}
    there is $\lambda_1 < 0$ sufficiently close to $0$ such that $I_{\lambda}(e)<0$ for all $\lambda \in [\lambda_1,0)$. Consequently, we have 
    $
        J_{\lambda, L}(e) \leq I_{\lambda}(e) < 0,
    $
    for all $\lambda \in [\lambda_1, 0)$ and $L>0$.
\end{proof}

It follows from Lemma \ref{GPM-negativo} that for any $\lambda \in [\lambda_1, 0), L>0$, there exists a sequence  $\{u_n\} := \{u_n^{\lambda, L}\} \subset E$ such that
\begin{equation} \label{J-sequence}
    J_{\lambda,L}(u_n) = c_{\lambda,L} + o_n(1) \quad \text{and} \quad \|J'_{\lambda,L}(u_n)\| = o_n(1)
\end{equation}
where
$$
c_{\lambda, L}: = \inf_{\gamma \in \Gamma} \max_{t \in [0,1]} J_{\lambda, L}(\gamma(t)),
\quad \Gamma := \left\{\gamma \in C\left([0,1]; E \right): \gamma(0) = 0, \ \gamma(1) = e \right\},
$$
and
\begin{equation} \label{minimaxs}
    0 < c_{0} := \inf_{\gamma \in \Gamma} \max_{t \in [0,1]} I_0(\gamma(t)) \leq c_{\lambda, L}  \leq c_{\lambda} = \inf_{\gamma \in \Gamma} \max_{t \in [0,1]} I_{\lambda}(\gamma(t)) < c_{\lambda_{1}} < +\infty,
\end{equation}
Moreover $c_\lambda\to c_0$ as $\lambda\to 0.$

\begin{lemma} \label{ltd-truncado}
    There exists $L_0>0$  and $\lambda_0 = \lambda_0(L_0) \in [\lambda_1, 0)$ such that,
    for any $L\ge L_0$ and any $\lambda\in(\lambda_0, 0)$
      the sequence in \eqref{J-sequence}
      for the functional $J_{\lambda,L}$ is bounded by $L$.
\end{lemma}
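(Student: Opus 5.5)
We follow the scheme of Lemma \ref{ltd-truncado1}, now with $\lambda<0$ and $r<s$. The proof is by contradiction. We use the Ambrosetti--Rabinowitz type combination $J_{\lambda,L}(u_n)-\frac1r J'_{\lambda,L}(u_n)[u_n]$, which equals
\[
\frac{r-p}{pr}\int_{\mathbb R^N}|\nabla u_n|^p+\frac{r-q}{rq}\int_{\mathbb R^N}|\nabla u_n|^q+\lambda\Big(\frac1r H'_L(u_n)[u_n]-H_L(u_n)\Big).
\]
A direct computation gives
\[
\frac1rH'_L(u)[u]-H_L(u)=\Big(\frac{p}{srL^p}\Psi'\Big(\frac{\|u\|_E^p}{L^p}\Big)\|u\|_E^p+\frac{s-r}{sr}\Psi_L(u)\Big)\int_{\mathbb R^N} u_+^s .
\]
For the upper bound on the level we use \eqref{minimaxs}: $c_{\lambda,L}\le c_\lambda\le c_{\lambda_1}$, a constant independent of $L$ and of $\lambda\in(\lambda_1,0)$.

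\textbf{Choice of parameters.} Fix $L$ large so that
\[
L>\frac{pr}{\kappa(r-p)}\Big(c_{\lambda_1}+2\frac{r-p}{pr}+1\Big).
\]
Then choose $\lambda_0\in[\lambda_1,0)$ with
\[
|\lambda_0|\Big(\frac{2^pp\|\Psi'\|_\infty+s-r}{sr}\Big)C_s^s(2L)^s\le 1 .
\]
This mirrors \eqref{eq:L1}--\eqref{eq:lambda1}, with $c_0$ replaced by $c_{\lambda_1}$ and $r-s$ by $s-r$.

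\textbf{Case $\|u_n\|_E\ge 2L$ along a subsequence.} Here $H_L(u_n)=0$ and $H_L'(u_n)=0$, so the combination equals only the gradient terms. By \eqref{eq:usefull},
\[
c_{\lambda_1}+o_n(1)+o_n(1)\|u_n\|_E\ge \frac{r-p}{pr}\big(\kappa\|u_n\|_E-2\big).
\]
This yields $\big(\kappa\frac{r-p}{pr}-o_n(1)\big)2L\le c_{\lambda_1}+2\frac{r-p}{pr}+o_n(1)$, which contradicts the choice of $L$.

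\textbf{Case $L<\|u_n\|_E<2L$.} The $\lambda$-term must now be controlled, and this is the step needing care: the sign of $\lambda$ and of $s-r$ have both flipped relative to Lemma \ref{ltd-truncado1}. Since $\lambda<0$ and $s>r$, the part $\lambda\frac{s-r}{sr}\Psi_L\int u_+^s$ is nonpositive. The $\Psi'$ part has no sign either. We therefore bound the whole bracket in absolute value:
\[
\Big|\lambda\Big(\frac1rH'_L(u_n)[u_n]-H_L(u_n)\Big)\Big|\le|\lambda|\,\frac{2^pp\|\Psi'\|_\infty+s-r}{sr}\,C_s^s(2L)^s\le 1,
\]
using $\|u_n\|_E<2L$ and $\int u_{n+}^s\le C_s^s\|u_n\|_E^s$. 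It follows that
\[
c_{\lambda_1}+2\frac{r-p}{pr}+o_n(1)\ge\Big(\kappa\frac{r-p}{pr}-o_n(1)\Big)L-1 .
\]
Letting $n\to\infty$ contradicts the choice of $L$.

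Combining the two cases, $\|u_n\|_E\le L$ for $n$ large. Since the same estimate holds for any $L'\ge L_0:=L$, with $\lambda_0$ adjusted as $L$ grows (as done in the paper), the lemma follows. The only delicate points are that $c_{\lambda,L}$ is bounded uniformly in $L$, which holds because $J_{\lambda,L}\le I_\lambda$ for $\lambda<0$, and that the wrong-signed $\lambda$-term is absorbed by taking $|\lambda|$ small depending on $L$.
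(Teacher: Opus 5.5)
Your argument is essentially the paper's proof. It uses the same contradiction based on $J_{\lambda,L}(u_n)-\frac1r J'_{\lambda,L}(u_n)[u_n]$, the same split into $\|u_n\|_E\ge 2L$ and $L<\|u_n\|_E<2L$, and the same choices: $L$ through $c_{\lambda_1}$, and $|\lambda|$ of order $(2L)^{-s}$. You are in fact a bit more careful than the paper. You keep the factor $\|\Psi'\|_\infty$, which the paper drops in this case even though $\|\Psi'\|_\infty\ge 1$ by the mean value theorem. You also set up the case distinction along subsequences rather than claiming one case holds for all large $n$.

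There is one caveat, which your last sentence half-concedes and which the paper's proof shares. The requirement $|\lambda|\,\frac{2^pp\|\Psi'\|_\infty+s-r}{sr}\,C_s^s(2L)^s\le 1$ forces $\lambda_0\to 0$ as $L\to\infty$. So what you actually prove is that for each fixed $L\ge L_0$ some $\lambda_0(L)$ works. You do not get a single $\lambda_0(L_0)$ valid for all $L\ge L_0$, which is what the statement asserts. This is harmless for the later existence argument, which uses one fixed $L$, but the uniform-in-$L$ clause is not established by this argument.
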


\begin{proof}
We argue by contraddition.
Suppose  that for any $L>0$
and $\lambda<0$ near zero,
the sequence $\{u_n\}$ associated to $J_{\lambda,L}$ satisfying \eqref{J-sequence} is such that 
$\|u_n\|_E > L$.
In particular  for
the pair $(L,\lambda)$ where
    $L$ is so large that
\begin{equation}\label{eq:L}
    L> \frac{pr}{\kappa (r-p)}(c_{\lambda_1} + 2\frac{r-p}{pr} + 1)
\end{equation}
 in such a way that $\lambda$ given by
\begin{equation}\label{eq:lambda}
    \lambda =- \left(\frac{2^pp +s-r }{sr}  C_s^s (2L)^s\right)^{-1}
\end{equation}
 belongs to $[\lambda_1,0)$,
 it happens that 
 the sequence $\{u_n\}$ satisfying \eqref{J-sequence} is such that 
$\|u_n\|_E > L$.

If it were
$\|u_n\|_E \geq 2L$ for all $n \in \mathbb{N}$ sufficiently large,
then, since the truncated term is zero,
\begin{align*}
    c_{\lambda_1} + o_n(1) + o_n(1)\|u_n\|_E &\geq c_{\lambda, L} + o_n(1) + o_n(1)\|u_n\|_E = J_{\lambda, L}(u_n) - \frac{1}{r} J'_{\lambda, L}(u_n)[u_n] \\
    &= \frac{r-p}{pr}\int_{\mathbb{R}^N}|\nabla u|^p +  \frac{r-q}{rq} \int_{\mathbb{R}^N}|\nabla u|^q \\
    &\geq \frac{r-p}{pr} \left(\kappa\|u_n\|_E-2 \right). 
\end{align*}
Consequently, for all $n \in \mathbb{N}$ large, 
\begin{eqnarray*}
   c_{\lambda_1} + 2\frac{r-p}{pr} + 1
   &\ge& c_{\lambda} + 2\frac{r-p}{pr} + o_n(1) \\ &\geq& \left(\kappa\frac{r-p}{pr} - o_n(1)\right) \|u_n\|_E \\
    &\geq &\left(\kappa\frac{r-p}{pr} - o_n(1)\right) 2L\ge \kappa \frac{r-p}{pr}L,  
\end{eqnarray*}
 which contradicts \eqref{eq:L}. So it has to be  $L < \|u_n\|_E < 2L$. But then again, 
\begin{align*}
    c_{\lambda_1} + o_n(1) + o_n(1)\|u_n\|_E &= c_{\lambda, L} + o_n(1) + o_n(1)\|u_n\|_E 
    \geq J_{\lambda, L}(u_n) - \frac{1}{r} J'_{\lambda, L}(u_n)[u_n] \\
    &= \frac{r-p}{pr} \int_{\mathbb{R}^N}|\nabla u|^p +  \frac{r-q}{qr} \int_{\mathbb{R}^N}|\nabla u|^q + \lambda \left(\frac{1}{r}H'_L(u_n)u_n - H_L(u_n) \right) \\
    &\ge \frac{r-p}{pr}\left(\kappa\|u_n\|_E-2 \right) +\\
    & \ \ + \lambda \left(\frac{p}{srL^p} \Psi'\left(\frac{\|u_n\|_E^p}{L^p}\right)\|u_n\|_{E}^{p} + \frac{s-r}{rs}\Psi_L(u_n) \right) \int_{\mathbb{R}^N}u_{n+}^s \\
    &\geq  \frac{r-p}{pr}\left(\kappa\|u_n\|_E-2 \right) + \lambda \left(\frac{p}{srL^p} (2L)^p + \frac{s-r}{sr}\right)C_s^s \|u_n\|_E^s, 
\end{align*}
where $C_s > 0$ is the embedding constant of $E$ in $L^s(\mathbb{R}^N)$. Thus, for all $n \in \mathbb{N}$ sufficiently large, we obtain 
\begin{align*}
    c_{\lambda_1} + 2\frac{r-p}{pr} + o_n(1) &\geq \left( \kappa\frac{r-p}{pr} - o_n(1) \right) \|u_n\|_E + \lambda \left(\frac{p}{srL^p} (2L)^p + \frac{s-r}{sr}\right) C_s^s \|u_n\|_E^{s} \\
    &\geq \left( \kappa\frac{r-p}{pr} - o_n(1) \right) L + \lambda \frac{2^pp +s-r }{sr}  C_s^s (2L)^s.
\end{align*}
Consequently, by \eqref{eq:lambda}
we get
\begin{equation*}
    c_{\lambda_1} + 2\frac{r-p}{pr} \geq \kappa\frac{r-p}{pr} L + \lambda \frac{2^pp +s-r }{sr}  C_s^s (2L)^s=
    \kappa\frac{r-p}{pr} L-1,
\end{equation*}
that again contradicts \eqref{eq:L}.
\end{proof}
From now on we assume that
$L\ge L_0$ and $\lambda \in (\lambda_0, 0)$ so that, the functional $J_{\lambda,L}$ which has the Mountain Pass Geometry (uniformly in $L$ and $\lambda$) possesses at the level $c_{\lambda,L}$ a  Palais-Smale sequence which is bounded by $L$ and then this sequence satisfies
\begin{equation*}\label{eq:PS}
     I_{\lambda}(u_n) = c_{\lambda} + o_n(1), \quad  \quad I'_{\lambda}(u_n) = o_n(1)
     \quad \text{and} \quad \|u_n\|\le L.
\end{equation*}

We show now that the sequence $\{u_n\}$
does not vanish.
\begin{lemma}
   We have 
    \begin{equation*}
        \liminf_{n \to \infty}{\int_{\mathbb{R}^N} |u_n|^r} \geq c_0\frac{qr}{r-q}> 0,
    \end{equation*}
    where $c_0 > 0$ is the minimax level in \eqref{minimaxs}.
\end{lemma}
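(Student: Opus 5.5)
We combine the functional with $\frac1q$ times its derivative along $u_n$, exactly as in the earlier nonvanishing lemmas. The new feature is that we keep track of the constant, using the ordering \eqref{minimaxs} of the minimax levels. Since $\|u_n\|_E\le L$, the truncation is inactive for $n$ large. We may therefore work with $I_\lambda$ and use $I_\lambda(u_n)=c_{\lambda,L}+o_n(1)$, $I'_\lambda(u_n)=o_n(1)$. Boundedness of $\{u_n\}$ gives $I'_\lambda(u_n)[u_n]=o_n(1)$.

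The key computation is
\begin{align*}
c_{\lambda,L}+o_n(1)&=I_\lambda(u_n)-\frac1q I'_\lambda(u_n)[u_n]\\
&=\frac{q-p}{pq}\int_{\mathbb R^N}|\nabla u_n|^p+\frac{r-q}{qr}\int_{\mathbb R^N}u_{n+}^r+\lambda\frac{s-q}{sq}\int_{\mathbb R^N}u_{n+}^s.
\end{align*}
Since $q<p$, the gradient term is nonpositive. Since $\lambda<0$ and $s>q$, the $s$-term is also nonpositive. Both can therefore be dropped, which gives
$$
c_{\lambda,L}+o_n(1)\le \frac{r-q}{qr}\int_{\mathbb R^N}u_{n+}^r\le \frac{r-q}{qr}\int_{\mathbb R^N}|u_n|^r.
$$

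To get a lower bound independent of $\lambda$, invoke \eqref{minimaxs}: $c_{\lambda,L}\ge c_0>0$. This holds because $J_{\lambda,L}\ge I_0$ for $\lambda<0$ (the term $-\lambda H_L\ge0$), and the path class $\Gamma$ is the same for all these functionals. Taking $\liminf$ as $n\to\infty$ yields
$$
\liminf_{n\to\infty}\int_{\mathbb R^N}|u_n|^r\ge c_0\frac{qr}{r-q}>0,
$$
which is the claim.

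The only delicate point is bookkeeping at the level of the Palais--Smale sequence. One must make sure that the sequence from \eqref{J-sequence} is, for large $n$, also a Palais--Smale sequence for $I_\lambda$. This requires $\|u_n\|_E\le L$, which holds by Lemma \ref{ltd-truncado}. One must also make sure that the level used is $c_{\lambda,L}$, bounded below by $c_0$, rather than some level depending on $\lambda$ in an uncontrolled way. With these two points checked, the rest is the sign argument above.
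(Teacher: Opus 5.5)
Your proposal is correct and is essentially the paper's argument. You use the same combination $I_\lambda(u_n)-\frac1q I'_\lambda(u_n)[u_n]$, drop the nonpositive $p$-gradient and $\lambda$-terms, and bound the level from below by $c_0$ via \eqref{minimaxs}. Your bookkeeping is slightly more careful than the paper's: the paper writes the level of the sequence as $c_\lambda$ where it should be $c_{\lambda,L}$, but since $c_0\le c_{\lambda,L}\le c_\lambda$ the conclusion is unaffected.
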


\begin{proof}

   We have
    \begin{align*}
        c_0 + o_n(1) &\leq c_{\lambda} + o_n(1) = I_{\lambda}(u_n) -\frac{1}{q} I'_{\lambda}(u_n)[u_n] \\
        &= \frac{q-p}{pq}\int_{\mathbb{R}^N}|\nabla u_n|^p+\frac{r-q}{qr}\int_{\mathbb{R}^N}u_{n+}^r+\lambda\frac{s-q}{qs}\int_{\mathbb{R}^N}u_{n+}^s \\
        &\le \frac{r-q}{qr}\int_{\mathbb{R}^N}|u_n|^r,
    \end{align*}
    and the result follows.
\end{proof}

Thus, the Lions Lemma tells us that
\begin{equation*}
   \sup_{y\in \mathbb{R}^N}\int_{B_1(y)}|u_n|^t > C_t>0
\end{equation*}
and there exists a sequence $\{y_n\} \subset \mathbb{R}^N$ such that 
\begin{equation*}
    \int_{B_1(y_n)}|u_n|^t \geq C_t.
\end{equation*}
Then the translated sequence
$v_n(x):= u_n(x + y_n)$
 has norm less then $L$, is still 
a Palais-Smale sequence for $I_\lambda$ at the level $c_\lambda$, and weakly converges to a non zero function $v\in E$ with $\|v\|\le L$.

As before $v$ lies within the region where the truncation is inactive, and hence it is also a critical point of $I_\lambda$.

\section{Regularity and positivity of the solutions}
\label{sec:RegPosit}

In this section, we establish regularity properties for weak solutions of problem \eqref{P}. First, we observe that the construction presented in  \cite[proof of Theorem 2, Section 4]{HL08}  can be adapted to our setting without essential modifications. Indeed, by carefully verifying each step of the argument, we note that the local estimates and truncation procedures remain valid for the nonlinearity of the form
$$
f(x,u)=|u|^{r-2}u+\lambda |u|^{s-2}u, \quad \text{with} \quad q^* < r , s < p^*,
$$
considered here. In particular, the structural assumptions required in the cited article are satisfied in our case.

As a direct consequence of this adaptation, we obtain that every weak solution \(u \in E\) of \eqref{P} satisfies
$$
u \in L_{\mathrm{loc}}^{\infty}(\mathbb{R}^N).
$$

Once local boundedness is established, we can apply \cite[Theorem 1]{HL08}, which provides regularity for weak solutions of elliptic equations of the type
$$
\begin{cases}
-\Delta_p v - \Delta_q v = f(x), & x \in \mathbb{R}^N, \\
v \in W^{1,p}_{\mathrm{loc}}(\mathbb{R}^N), & 1 < q < p.
\end{cases}
$$
In our case, the function \(f(x)=|u(x)|^{r-2}u(x)+\lambda |u(x)|^{s-2}u(x)\), where $u \in E$ is a weak solution of the \eqref{P}, satisfies the required conditions of that result, in particular the growth and local continuity assumptions, since \(u\) is locally bounded.
Therefore, we conclude that
$
u \in C^{1,\alpha}_{\mathrm{loc}}(\mathbb{R}^N),
$
for some $\alpha \in (0,1)$. Moreover, in the supercritical case treated via truncation, the obtained solution is, by construction, already bounded. Indeed, considering the associated truncated problem
$$
-\Delta_p u - \Delta_q u = \phi_K(u),
$$
where \(\phi_K\) is defined as in the previous construction, the solution coincides with that of the original problem for sufficiently large values of \(K\), and automatically satisfies an \(L^\infty\) bound. Hence, in this case as well, the assumptions of \cite[Theorem 1]{HL08} remain valid, yielding again the regularity
$
u \in C^{1,\alpha}_{\mathrm{loc}}(\mathbb{R}^N).
$
In conclusion, in all cases considered, weak solutions of \eqref{P} belong to 
$C^{1,\alpha}_{\mathrm{loc}}(\mathbb{R}^N)$. In addition, an application of \cite[Theorem 1.1]{T67} shows that the nonnegative solutions obtained are, in fact, strictly positive.

\section{Nonexistence result}\label{sec:NE}
Building upon the ideas in \cite{K93,PS86} we prove a Pohozaev-type identity for problem \eqref{P}.

For this purpose, let us take for $t \in \mathbb{R}$
\begin{equation*}
    f(t) = |t|^{r-2}t + \lambda |t|^{s-2}t\quad\mbox{and} 
\quad    F(t) = \int_{0}^{t} f(\tau) d\tau = \frac{|t|^{r}}{r} + \lambda \frac{|t|^{s}}{s}.
\end{equation*}
Furthermore, let $\eta \in C_{0}^{\infty}(\mathbb{R}^N)$ be a radial function such that
\begin{equation*}
    0\leq \eta \leq 1, \quad \eta \equiv 1 \quad \textrm{in} \quad B_{1}(0) \quad \textrm{and} \quad \eta \equiv 0 \quad \textrm{in} \quad \mathbb{R}^N \setminus B_{2}(0).
\end{equation*}
For $R > 0$, set 
\begin{equation*}
    \eta_{R}(x) = \eta\left(\frac{x}{R} \right).
\end{equation*}
Then 
\begin{equation*}
    0\leq \eta_{R} \leq 1, \quad \eta_{R} \equiv 1 \quad \textrm{in} \quad B_{R}(0), \quad  \eta_{R} \equiv 0 \quad \textrm{in} \quad \mathbb{R}^N \setminus B_{2R}(0) \quad \textrm{and} \quad |\nabla\eta_{R}| \leq \frac{\|\nabla \eta\|_{\infty}}{R}.
\end{equation*}

\begin{theorem}[Pohozaev Identity]\label{Pohozaev}
Let $u \in E \cap W^{2,p}_{\mathrm{loc}}(\mathbb{R}^N)$
be a weak solution of problem \eqref{P}. Then the following Pohozaev identity holds:
\begin{equation}\label{Pohozaev1}
\frac{1}{p^*}\int_{\mathbb{R}^N}|\nabla u|^p+\frac{1}{q^*}\int_{\mathbb{R}^N}|\nabla u|^q=\frac{1}{r}\int_{\mathbb{R}^N}|u|^r+ \frac{\lambda}{s}\int_{\mathbb{R}^N}|u|^s.
\end{equation}
\end{theorem}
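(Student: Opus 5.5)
We plan to follow the classical Pohozaev–Pucci–Serrin strategy, localized with the cutoff $\eta_R$ so that no decay at infinity is needed. We test the equation against the truncated multiplier $\eta_R\,(x\cdot\nabla u)$ and then let $R\to\infty$. The assumption $u\in W^{2,p}_{\mathrm{loc}}(\mathbb R^N)$ makes $\eta_R\,x\cdot\nabla u$ an admissible test function. It belongs to $W^{1,p}$ with compact support, and since $q<p$ it also belongs to $W^{1,q}$ on bounded sets. So the weak formulation \eqref{idweak}, extended by density (Proposition \ref{Densidade}), applies to it. At the same time we will use the energy identity obtained by testing with $\varphi=u$:
$$\int_{\mathbb R^N}|\nabla u|^p+\int_{\mathbb R^N}|\nabla u|^q=\int_{\mathbb R^N}|u|^r+\lambda\int_{\mathbb R^N}|u|^s .$$

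The key pointwise identities, valid a.e. for $u\in W^{2,p}_{\mathrm{loc}}$ with $m\in\{p,q\}$, are
$$|\nabla u|^{m-2}\nabla u\cdot\nabla(x\cdot\nabla u)=|\nabla u|^m+\tfrac1m\,x\cdot\nabla\big(|\nabla u|^m\big),\qquad f(u)\,x\cdot\nabla u=x\cdot\nabla F(u).$$
Inserting these into the tested equation and integrating by parts the divergence terms against $\eta_R x$ gives
\begin{align*}
&\sum_{m\in\{p,q\}}\Big[\Big(1-\tfrac Nm\Big)\int\eta_R|\nabla u|^m-\tfrac1m\int|\nabla u|^m\,x\cdot\nabla\eta_R+\int|\nabla u|^{m-2}(\nabla u\cdot\nabla\eta_R)(x\cdot\nabla u)\Big]\\
&\qquad=-N\int\eta_R F(u)-\int F(u)\,x\cdot\nabla\eta_R .
\end{align*}
Here $\nabla(|\nabla u|^m)$ requires some care when $m<2$. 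We would first justify the identity for smooth approximations, or use that $|\nabla u|^m\in W^{1,1}_{\mathrm{loc}}$ by the chain rule, since $|\nabla u|^{m-1}|D^2u|\in L^1_{\mathrm{loc}}$ by Hölder.

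Next we pass to the limit $R\to\infty$. All error terms contain $\nabla\eta_R$, which is supported in $B_{2R}\setminus B_R$ and satisfies $|x||\nabla\eta_R|\le 2\|\nabla\eta\|_\infty$. They are therefore bounded by $C\int_{|x|>R}\big(|\nabla u|^p+|\nabla u|^q+|F(u)|\big)$, which tends to $0$ because $|\nabla u|\in L^p\cap L^q$ and $F(u)\in L^1$. The last fact holds since $u\in L^r\cap L^s$, via the embedding \eqref{Principal-Embedding} for $q^*\le r,s\le p^*$; in the supercritical case it holds as part of the meaning of a solution. The main terms converge by dominated convergence, giving
$$\frac{N-p}{p}\int|\nabla u|^p+\frac{N-q}{q}\int|\nabla u|^q=N\int F(u).$$
Dividing by $N$ yields exactly \eqref{Pohozaev1}, because $\frac{N-m}{Nm}=\frac1{m^*}$. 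The energy identity is not needed for \eqref{Pohozaev1} itself, only later in the proof of Theorem \ref{th:NE}.

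We expect the main obstacle to be the rigorous justification of the pointwise identity and of the admissibility of $\eta_R\,x\cdot\nabla u$ as a test function under only $W^{2,p}_{\mathrm{loc}}$ regularity. This is delicate for the $q$-term and when $p<2$, where $|\nabla u|^{m-2}$ is singular. Our first approach is to mollify: set $u_\varepsilon=\rho_\varepsilon*u$, verify the identity for $u_\varepsilon$ on $B_{2R}$, and pass to $\varepsilon\to0$ using $u_\varepsilon\to u$ in $W^{2,p}(B_{2R})$. Since $q<p$, this also gives $\nabla u_\varepsilon\to\nabla u$ in $L^q$ on $B_{2R}$. The convergence $|\nabla u_\varepsilon|^{m-2}\nabla u_\varepsilon\to|\nabla u|^{m-2}\nabla u$ in $L^{m'}(B_{2R})$ then follows by standard Nemytskii continuity, and the product with $D^2u_\varepsilon$ is controlled by Hölder.
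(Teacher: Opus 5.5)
Your proposal is correct and follows the paper's proof essentially step by step. It uses the same test function $\eta_R\,(x\cdot\nabla u)$, the same pointwise identities, the same integration by parts against $\eta_R x$, and the same vanishing of the annulus terms as $R\to\infty$. Your mollification and chain-rule justification of $|\nabla u|^{m-2}\nabla u\cdot\nabla(x\cdot\nabla u)=|\nabla u|^m+\tfrac1m\,x\cdot\nabla(|\nabla u|^m)$ under only $W^{2,p}_{\mathrm{loc}}$ regularity is a detail the paper passes over silently; no density argument is needed, since $\eta_R\,(x\cdot\nabla u)\in E$ directly.
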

\begin{proof}
The idea is to consider the test function
\begin{equation*}
    \varphi_{R}(x) := \eta_{R}(x)\left(x \cdot \nabla u(x) \right) \in E
\end{equation*} 
in \eqref{idweak}.
Calculating separately, we obtain
\begin{align*}
    \mathcal{I}_{R}^{1} &:= \int_{\mathbb{R}^N} f(u)\varphi_R \\
    &= \int_{\mathbb{R}^N} \operatorname{div} \left(F(u)\eta_{R}x\right) - F(u)\left(\nabla \eta_{R} \cdot x + N \eta_{R} \right) \\
    &= \int_{B_{2R}(0)} \operatorname{div} \left(F(u)\eta_{R}x\right) - \int_{\mathbb{R}^N} F(u)(\nabla \eta_{R} \cdot x) - N\int_{\mathbb{R}^N} \eta_{R} F(u) \\
    &= \int_{\partial B_{2R}(0)} F(u)\eta_{R}x \cdot \left(\frac{x}{2R} \right) dS_{x} - \int_{\mathbb{R}^N} F(u)(\nabla \eta_{R} \cdot x) - N\int_{\mathbb{R}^N} \eta_{R} F(u) \\
    &= - \int_{\mathbb{R}^N} F(u)(\nabla \eta_{R} \cdot x) - N\int_{\mathbb{R}^N} \eta_{R} F(u) \\
    &=: -\mathcal{I}_{R}^{1,1} - \mathcal{I}_{R}^{1,2},
\end{align*}

In addition, we also have
\begin{align*}
    \mathcal{I}_{R}^{2} := \int_{\mathbb{R}^N} |\nabla u|^{p-2}\nabla u \nabla \varphi_{R} &= \int_{\mathbb{R}^N} |\nabla u|^{p-2} (x \cdot \nabla u)(\nabla u \cdot \nabla \eta_{R}) + \int_{\mathbb{R}^N} \eta_{R}|\nabla u|^{p-2}\nabla u \nabla (x \cdot \nabla u) \\
    &=: \mathcal{I}_{R}^{2,1} + \mathcal{I}_{R}^{2,2},
\end{align*}
and similarly
\begin{align*}
    \mathcal{I}_{R}^{3} := \int_{\mathbb{R}^N} |\nabla u|^{q-2}\nabla u \nabla \varphi_{R} &= \int_{\mathbb{R}^N} |\nabla u|^{q-2} (x \cdot \nabla u)(\nabla u \cdot \nabla \eta_{R}) + \int_{\mathbb{R}^N} \eta_{R}|\nabla u|^{q-2}\nabla u \nabla (x \cdot \nabla u) \\
    &=: \mathcal{I}_{R}^{3,1} + \mathcal{I}_{R}^{3,2}.
\end{align*}
Now, applying $\varphi_{R} \in E$ in \eqref{idweak}, 
by the previous identities we obtain 
\begin{equation} \label{eqweak}
    \mathcal{I}_{R}^{2,1} + \mathcal{I}_{R}^{2,2} + \mathcal{I}_{R}^{3,1} + \mathcal{I}_{R}^{3,2} = - \mathcal{I}_{R}^{1,1} - \mathcal{I}_{R}^{1,2}.
\end{equation}
We see that 
\begin{align*}
    \left|\mathcal{I}_{R}^{1,1} \right| &=\left| \int_{R < |x| < 2R} F(u)(\nabla \eta_{R} \cdot x) \right| \leq \int_{R < |x| < 2R} |F(u)||\nabla \eta_{R}||x| \\
    &\le 2\|\nabla \eta\|_{\infty} \int_{|x|>R} \left( \frac{|u|^{r}}{r} + |\lambda| \frac{|u|^{s}}{s} \right) \overset{R \to + \infty}{\longrightarrow} 0,
\end{align*}
\begin{align*}
    \left|\mathcal{I}_{R}^{2,1} \right| &= \left| \int_{R < |x| < 2R} |\nabla u|^{p-2} (x \cdot \nabla u)(\nabla u \cdot \nabla \eta_{R}) \right| \leq \int_{R < |x| < 2R} |\nabla u|^{p} |x||\nabla \eta_{R}| \\
    &\leq 2\|\nabla \eta\|_{\infty} \int_{|x| > R} |\nabla u|^{p} \overset{R \to + \infty}{\longrightarrow} 0
\end{align*}
and, similarly,
\begin{equation*}
    \left|\mathcal{I}_{R}^{3,1}\right| \overset{R \to + \infty}{\longrightarrow} 0.
\end{equation*}
Note also that it is 
\begin{align*}
    \left|\mathcal{I}_{R}^{1,2} - N\int_{\mathbb{R}^N}F(u)\right| & \leq N \int_{\mathbb{R}^N}(1-\eta_R)|F(u)| \overset{R \to + \infty}{\longrightarrow} 0,
\end{align*}
due to the Lebesgue's Dominated Convergence Theorem. A straightforward calculation shows that 
\begin{align*}
    \mathcal{I}_{R}^{2,2} &= \int_{\mathbb{R}^N}\left[ \eta_{R}|\nabla u|^{p} + \eta_{R}|\nabla u|^{p-2} \sum_{i,j = 1}^{N}x_j u_{x_i}u_{x_i x_j} \right] \\
    &= \int_{\mathbb{R}^N}\left[\eta_{R}|\nabla u|^{p} + \frac{1}{p} \eta_{R} x \cdot \nabla\left(|\nabla u|^{p} \right) \right] \\
    &= \int_{\mathbb{R}^N} \left[\eta_{R}|\nabla u|^{p} + \operatorname{div}\left(\frac{|\nabla u|^{p}}{p}\eta_{R} x \right) - \frac{N}{p}\eta_{R} |\nabla u|^{p}  -  \frac{|\nabla u|^{p}}{p}(x\cdot \nabla \eta_{R}) \right]\\
    &= \int_{B_{2R}(0)} \operatorname{div}\left(\frac{|\nabla u|^{p}}{p}\eta_{R} x \right) - \int_{\mathbb{R}^N}\frac{N-p}{p}\eta_{R} |\nabla u|^{p}  -  \int_{\mathbb{R}^N}\frac{|\nabla u|^{p}}{p}(x\cdot \nabla \eta_{R})\\
    &=-  \frac{N-p}{p} \int_{\mathbb{R}^N}\eta_{R} |\nabla u|^{p}  -  \int_{\mathbb{R}^N} \frac{|\nabla u|^{p}}{p}(x\cdot \nabla \eta_{R}) \\
    &=: - \mathcal{I}_{R}^{2,2,1} -  \mathcal{I}_{R}^{2,2,2}.
\end{align*}
Since
\begin{align*}
    \left|\mathcal{I}_{R}^{2,2,2} \right| &= \left| \int_{R<|x|<2R} \frac{|\nabla u|^{p}}{p}(x\cdot \nabla \eta_{R}) \right| \leq \frac{2\|\nabla \eta\|_{\infty}}{p} \int_{|x|>R} |\nabla u|^p \overset{R \to + \infty}{\longrightarrow} 0
\end{align*}
and 
\begin{align*}
    \left|\mathcal{I}_{R}^{2,2,1} - \int_{\mathbb{R}^N} \frac{N-p}{p} |\nabla u|^{p} \right| & \leq \frac{N-p}{p} \int_{\mathbb{R^N}}(1-\eta_{R})|\nabla u|^p \overset{R \to + \infty}{\longrightarrow} 0,
\end{align*}
it follows that
\begin{equation*}
    \mathcal{I}_{R}^{2,2} \overset{R \to + \infty}{\longrightarrow} - \frac{N-p}{p}\int_{\mathbb{R}^N} |\nabla u|^{p}.
\end{equation*}
Similarly  we have
\begin{equation*}
    \mathcal{I}_{R}^{3,2} \overset{R \to + \infty}{\longrightarrow} - \frac{N-q}{q}\int_{\mathbb{R}^N} |\nabla u|^{q}.
\end{equation*}
Finally, setting $R \to +\infty$ in \eqref{eqweak}, we get
\begin{equation*}
    - \frac{N-p}{p}\int_{\mathbb{R}^N} |\nabla u|^{p} - \frac{N-q}{q}\int_{\mathbb{R}^N} |\nabla u|^{q} = -\frac{N}{r} \int_{\mathbb{R^N}} |u|^{r} - \frac{\lambda N}{q} \int_{\mathbb{R^N}} |u|^{s}
\end{equation*}
and the result follows.
\end{proof}

\subsection{Proof of Theorem \ref{th:NE}}

If $u \in E \cap W^{2,p}_{\mathrm{loc}}(\mathbb{R}^N)$ is a weak solution of \eqref{P}, then by Theorem~\ref{Pohozaev} 
$$
\int_{\mathbb{R}^N} |\nabla u|^p + \int_{\mathbb{R}^N} |\nabla u|^q = \int_{\mathbb{R}^N}|u|^r + \lambda\int_{\mathbb{R}^N}|u|^s.
$$
This, together with \eqref{Pohozaev1} implies the identities
\begin{equation*} \label{id1}
    \frac{p^*-q^*}{p^*q^*} \int_{\mathbb{R}^N} |\nabla u|^{q} = \frac{p^*-r}{p^*r}\int_{\mathbb{R}^N}|u|^{r} + \lambda \frac{p^*-s}{p^*s} \int_{\mathbb{R}^N}|u|^{s}
\end{equation*}
and
\begin{equation*} \label{id2}
    \frac{q^*-p^*}{p^*q^*} \int_{\mathbb{R}^N} |\nabla u|^{p} = \frac{q^*-r}{q^*r} \int_{\mathbb{R}^N} |u|^{r} + \lambda \frac{q^*-s}{q^*s} \int_{\mathbb{R}^N}|u|^{s},
\end{equation*}
from which all the statements of Theorem~\ref{th:NE} follow immediately.

\subsection*{Acknowledgements}

E. S. Medeiros was supported by 
CNPQ/Grants 310885/2023-0 (Brazil).

A. A. Nascimento was supported by 
Capes - 88887.949291/2024-00 (Brazil)
and PDSE/Capes program 88881.219456/2025-01 (Brazil).

G. Siciliano  was partially supported
by  
GNAMPA-INdAM Project 2025, CUP
E5324001950001 (Italy),
Capes, CNPq projects 402514/2024-6 and 304244/2023-6, Fapesp projects 2022/16407-1
and 2022/16097-2 (Brazil).

\medskip

\noindent {\bf Data availability statement} Date sharing is not applicable to this article as no new 
data were created or analyzed in this study.

\medskip

\noindent {\bf Competing interests} The authors declare that they have no competing interests.

\medskip

\noindent {\bf Author contributions} All authors contributed equally to the writing of this article. All 
authors read and approved the final manuscript.

\medskip

\noindent {\bf Conflict of interest} The authors declare no conflict of interests.

\end{document}